\documentclass[12pt]{amsart}

\usepackage{times}
\usepackage{ifpdf}

\usepackage{amsmath, amssymb, amsthm, bm, xspace, hyperref, enumitem, subfiles}

\usepackage[usenames,dvipsnames,svgnames,table]{xcolor}

\usepackage[config, labelfont={normalsize}]{caption,subfig}

\usepackage{tikz,pgfplots}
\usetikzlibrary{patterns}
\usetikzlibrary{matrix,arrows,decorations.pathmorphing}
\usetikzlibrary{shapes.geometric}
\usetikzlibrary{decorations.markings}
\usetikzlibrary{calc}
\usetikzlibrary{fadings}

\pgfdeclarepatternformonly{north west lines D}{\pgfqpoint{-1pt}{-1pt}}{\pgfqpoint{13pt}{13pt}}{\pgfqpoint{12pt}{12pt}}%
{
  \pgfsetlinewidth{0.4pt}
  \pgfpathmoveto{\pgfqpoint{0pt}{12.1pt}}
  \pgfpathlineto{\pgfqpoint{12.1pt}{0pt}}
  \pgfusepath{stroke}
}

\pgfdeclarepatternformonly{horizontal lines A}{\pgfpointorigin}{\pgfqpoint{100pt}{6pt}}{\pgfqpoint{100pt}{6pt}}%
{
  \pgfsetlinewidth{0.4pt}
  \pgfpathmoveto{\pgfqpoint{-1pt}{1pt}}
  \pgfpathlineto{\pgfqpoint{101pt}{1pt}}
  \pgfpathmoveto{\pgfqpoint{-1pt}{3pt}}
  \pgfpathlineto{\pgfqpoint{101pt}{3pt}}
  \pgfpathmoveto{\pgfqpoint{-1pt}{5pt}}
  \pgfpathlineto{\pgfqpoint{101pt}{5pt}}
  \pgfusepath{stroke}
}

\pgfdeclarepatternformonly{horizontal lines B}{\pgfpointorigin}{\pgfqpoint{100pt}{6pt}}{\pgfqpoint{100pt}{6pt}}%
{
  \pgfsetlinewidth{0.4pt}
  \pgfpathmoveto{\pgfqpoint{-1pt}{1pt}}
  \pgfpathlineto{\pgfqpoint{101pt}{1pt}}
  \pgfpathmoveto{\pgfqpoint{-1pt}{3pt}}
  \pgfpathlineto{\pgfqpoint{101pt}{3pt}}
  \pgfusepath{stroke}
}

\pgfdeclarepatternformonly{horizontal lines C}{\pgfpointorigin}{\pgfqpoint{100pt}{6pt}}{\pgfqpoint{100pt}{6pt}}%
{
  \pgfsetlinewidth{0.4pt}
  \pgfpathmoveto{\pgfqpoint{-1pt}{1pt}}
  \pgfpathlineto{\pgfqpoint{101pt}{1pt}}
  \pgfusepath{stroke}
}

\pgfdeclarepatternformonly{vertical lines A}{\pgfpointorigin}{\pgfqpoint{6pt}{100pt}}{\pgfqpoint{6pt}{100pt}}%
{
  \pgfsetlinewidth{0.4pt}
  \pgfpathmoveto{\pgfqpoint{1pt}{-1pt}}
  \pgfpathlineto{\pgfqpoint{1pt}{101pt}}
  \pgfpathmoveto{\pgfqpoint{3pt}{-1pt}}
  \pgfpathlineto{\pgfqpoint{3pt}{101pt}}
  \pgfpathmoveto{\pgfqpoint{5pt}{-1pt}}
  \pgfpathlineto{\pgfqpoint{5pt}{101pt}}
  \pgfusepath{stroke}
}

\pgfdeclarepatternformonly{vertical lines B}{\pgfpointorigin}{\pgfqpoint{6pt}{100pt}}{\pgfqpoint{6pt}{100pt}}%
{
  \pgfsetlinewidth{0.4pt}
  \pgfpathmoveto{\pgfqpoint{1pt}{-1pt}}
  \pgfpathlineto{\pgfqpoint{1pt}{101pt}}
  \pgfpathmoveto{\pgfqpoint{3pt}{-1pt}}
  \pgfpathlineto{\pgfqpoint{3pt}{101pt}}
  \pgfusepath{stroke}
}

\pgfdeclarepatternformonly{vertical lines C}{\pgfpointorigin}{\pgfqpoint{6pt}{100pt}}{\pgfqpoint{6pt}{100pt}}%
{
  \pgfsetlinewidth{0.4pt}
  \pgfpathmoveto{\pgfqpoint{1pt}{-1pt}}
  \pgfpathlineto{\pgfqpoint{1pt}{101pt}}
  \pgfusepath{stroke}
}


\usepackage{todonotes}

\usepackage[capitalise]{cleveref}
\Crefname{figure}{Figure}{Figures}

\crefname{empty}{}{}

\newtheorem{theorem}{Theorem}[section]
\newtheorem{proposition}[theorem]{Proposition}
\newtheorem{lemma}[theorem]{Lemma}

\newtheorem{fact}[theorem]{Fact}

\theoremstyle{remark}

\newtheorem{remark}[theorem]{Remark}

\numberwithin{equation}{section}

\newcommand{\E}{\mathbb{E}}
\newcommand{\jdt}{jeu de taquin\xspace}
\newcommand{\Jdt}{Jeu de taquin\xspace}

\newcommand{\alphabet}{\mathbb{A}}

\newcommand{\alphabetinsertion}{\mathbb{I}}
\newcommand{\alphabetjeu}{\mathbb{J}}
\newcommand{\alphabetmeasure}{\mathcal{M}}
\newcommand{\alphabetmeasureinsertion}{\alphabetmeasure^\alphabetinsertion}
\newcommand{\alphabetmeasurejeu}{\alphabetmeasure^\alphabetjeu}

\newcommand{\R}{\mathbb{R}}

\newcommand{\Z}{\mathbb{Z}}

\newcommand{\allpartitions}{\mathbb{Y}}
\newcommand{\partitions}[1]{\mathbb{Y}_{#1}}
\newcommand{\tableaux}{\mathbb{T}}
\newcommand{\Sym}[1]{\mathfrak{S}_{#1}}
\newcommand{\N}{\mathbb{N}}
\newcommand{\jpath}{\mathbf{p}}
\newcommand{\jpathlazy}{\mathbf{q}}
\newcommand{\character}{\chi}
\newcommand{\measure}{\mathfrak{M}}

\newcommand{\letter}{w}
\newcommand{\Letter}{W}
\newcommand{\Word}{\mathbf{W}}

\newcommand{\isomorphismItself}{\iota}
\newcommand{\isomorphism}[1]{\isomorphismItself(#1)}
\newcommand{\PI}{\bm{\pi}}

\DeclareMathOperator{\Id}{Id}

\DeclareMathOperator{\SC}{SC}
\newcommand{\SClaw}{\mathcal{L}_{\SC}}

\newcommand{\equalindist}{\stackrel{\mathcal{D}}{=}}            

\DeclareMathOperator{\RSK}{RSK}

\newcommand{\todoPiotr}[2][]{\todo[backgroundcolor=green!10,linecolor=black#1]{\textsc{Piotr} says: #2}}

\title[RSK, jeu de taquin and Vershik-Kerov measures]{Robinson-Schensted-Knuth algorithm,\\ jeu de taquin and  Kerov-Vershik measures \\ on infinite tableaux}

\author{Piotr \'Sniady}
\address{Zentrum Mathematik, M5,
Technische Universit\"at M\"unchen, \linebreak
Boltzmannstrasse 3,
85748 Garching, Germany \newline \indent
Institute of Mathematics, Polish Academy of Sciences, \linebreak
\mbox{ul.~\'Sniadec\-kich 8,} 00-956 Warszawa, Poland
 \newline
\indent 
Institute of Mathematics,
University of Wroclaw,  \linebreak \mbox{pl.\ Grunwaldzki~2/4,} 50-384
Wroclaw, Poland
} 
\email{Piotr.Sniady@tum.de, Piotr.Sniady@math.uni.wroc.pl}

\keywords{asymptotic representation theory of symmetric groups, jeu de taquin, Robinson-Schensted-Knuth algorithm, Young tableau, 
Vershik-Kerov measures, Thoma characters of the infinite symmetric group, dynamical system, isomorphism of measure preserving systems}

\subjclass[2010]{
60C05   
(Primary)
05E10,  
20C30,  
20C32, 	
37A05   
(Secondary)
}

\begin{document}

\begin{abstract}
We investigate \emph{Robinson-Schensted-Knuth algorithm} ($\RSK$) and Sch\"utzenberger's \emph{jeu de taquin} in the infinite setup. We show that the recording tableau in $\RSK$ defines an \emph{isomorphism} of the following two dynamical systems: (i)~a sequence of i.i.d.~random letters equipped with Bernoulli shift, and (ii)~a random infinite Young tableau (with the distribution given by \emph{Vershik-Kerov measure}, corresponding to some Thoma character of the infinite symmetric group) equipped with \jdt transformation. 
As a special case we recover the results on non-colliding random walks and multidimensional Pitman transform.
\end{abstract}

\maketitle

\section{Introduction}

We start with a rather informal introduction; the formal definitions and some missing notation are postponed until \cref{sec:preliminaries}.

\subsection{Characters of the infinite symmetric groups}
%
The notion of \emph{irreducible representations} turns out to be not very suitable in the case of infinite groups and it is more convenient to replace it by the notion of \emph{indecomposable characters} (the name \emph{extremal characters} is also frequently used). 
The \emph{indecomposable characters} of the \emph{infinite symmetric group $\Sym{\infty}$}
were classified by Thoma \cite{Thoma1964};
he showed that there is a bijective correspondence between such characters and triples $(\alpha,\beta,\gamma)$ such that
\begin{align*}
\alpha&=(\alpha_1,\alpha_2,\dots)\text{ with }\alpha_1\geq \alpha_2\geq \cdots \geq 0,\\
\beta&=(\beta_1,\beta_2,\dots)\text{ with }\beta_1\geq \beta_2\geq \cdots \geq 0,
\end{align*}
are weakly decreasing sequences of non-negative numbers and $\gamma\geq 0$ is a non-negative number such that
\[ \alpha_1+\alpha_2+\cdots+\beta_1+\beta_2+\cdots+\gamma = 1.\]
The corresponding character will be denoted by $\character_{\alpha,\beta,\gamma}$.
The set of such triples $(\alpha,\beta,\gamma)$ is called \emph{Thoma simplex}.

The meaning of the parameters in Thoma's characterization remained rather mysterious until Vershik and Kerov \cite{VershikKerov1981} related them to asymptotics of some random \emph{infinite Young tableaux}. We shall review this relationship in the following.

\subsection{Infinite Young tableaux and Vershik-Kerov measures}
\label{subsec:infinite-young-tableaux}

\begin{figure}[t]
\begin{center}
\subfloat[]{
\begin{tikzpicture}[scale=0.8]
\tikzfading[name=fade right,
      left color=transparent!100,
      right color=transparent!0]
\tikzfading[name=fade up,
      bottom color=transparent!100,
      top color=transparent!0]
\clip (-0.2,-0.2) rectangle (5.9,5.9);
\begin{scope}[]
\clip[](0,0) -- (60,0) -- (60,1) -- (41,1) -- (41,2) -- (32,2) -- (32,3) -- (2,3) -- (2,103) -- (1,103) -- (1,270) -- (0,270) -- cycle;
\draw[very thin,black] (0,0) grid (6,6);
\end{scope}
\draw[very thick](0,0) -- (60,0) -- (60,1) -- (41,1) -- (41,2) -- (32,2) -- (32,3) -- (2,3) -- (2,103) -- (1,103) -- (1,270) -- (0,270) -- cycle;
\draw (0.5,0.5) node {1};
\draw (1.5,0.5) node {2};
\draw (2.5,0.5) node {7};
\draw (3.5,0.5) node {24};
\draw (4.5,0.5) node {29};
\draw (5.5,0.5) node {40};
\draw (6.5,0.5) node {45};
\draw (0.5,1.5) node {3};
\draw (1.5,1.5) node {8};
\draw (2.5,1.5) node {13};
\draw (3.5,1.5) node {33};
\draw (4.5,1.5) node {52};
\draw (5.5,1.5) node {71};
\draw (6.5,1.5) node {79};
\draw (0.5,2.5) node {4};
\draw (1.5,2.5) node {9};
\draw (2.5,2.5) node {20};
\draw (3.5,2.5) node {62};
\draw (4.5,2.5) node {78};
\draw (5.5,2.5) node {99};
\draw (6.5,2.5) node {119};
\draw (0.5,3.5) node {5};
\draw (1.5,3.5) node {12};
\draw (0.5,4.5) node {6};
\draw (1.5,4.5) node {21};
\draw (0.5,5.5) node {10};
\draw (1.5,5.5) node {22};
\draw (0.5,6.5) node {11};
\draw (1.5,6.5) node {36};
\ifpdf
      \fill [white,path fading=fade right] (4,-1) rectangle (6.1,6.1);
      \fill [white,path fading=fade up] (-4,5) rectangle (6.1,6.1);
\fi
 
\end{tikzpicture}
\label{fig:inf-A}
}
\hfill
\subfloat[]{
\begin{tikzpicture}[scale=0.8]
\tikzfading[name=fade right,
      left color=transparent!100,
      right color=transparent!0]
\tikzfading[name=fade up,
      bottom color=transparent!100,
      top color=transparent!0]
\clip (-0.2,-0.2) rectangle (5.9,5.9);
\begin{scope}[]
\clip[](0,0) -- (30,0) -- (30,1) -- (25,1) -- (25,2) -- (19,2) -- (19,3) -- (18,3) -- (18,4) -- (17,4) -- (17,5) -- (16,5) -- (16,6) -- (15,6) -- (15,7) -- (14,7) -- (14,8) -- (11,8) -- (11,10) -- (10,10) -- (10,12) -- (9,12) -- (9,13) -- (8,13) -- (8,15) -- (7,15) -- (7,16) -- (6,16) -- (6,19) -- (5,19) -- (5,20) -- (4,20) -- (4,22) -- (3,22) -- (3,26) -- (2,26) -- (2,31) -- (1,31) -- (1,250) -- (0,250) -- cycle;
\draw[very thin,black] (0,0) grid (6,6);
\end{scope}
\draw[very thick](0,0) -- (30,0) -- (30,1) -- (25,1) -- (25,2) -- (19,2) -- (19,3) -- (18,3) -- (18,4) -- (17,4) -- (17,5) -- (16,5) -- (16,6) -- (15,6) -- (15,7) -- (14,7) -- (14,8) -- (11,8) -- (11,10) -- (10,10) -- (10,12) -- (9,12) -- (9,13) -- (8,13) -- (8,15) -- (7,15) -- (7,16) -- (6,16) -- (6,19) -- (5,19) -- (5,20) -- (4,20) -- (4,22) -- (3,22) -- (3,26) -- (2,26) -- (2,31) -- (1,31) -- (1,250) -- (0,250) -- cycle;
\draw (0.5,0.5) node {1};
\draw (1.5,0.5) node {2};
\draw (2.5,0.5) node {5};
\draw (3.5,0.5) node {10};
\draw (4.5,0.5) node {32};
\draw (5.5,0.5) node {34};
\draw (6.5,0.5) node {36};
\draw (0.5,1.5) node {3};
\draw (1.5,1.5) node {7};
\draw (2.5,1.5) node {13};
\draw (3.5,1.5) node {22};
\draw (4.5,1.5) node {38};
\draw (5.5,1.5) node {39};
\draw (6.5,1.5) node {41};
\draw (0.5,2.5) node {4};
\draw (1.5,2.5) node {12};
\draw (2.5,2.5) node {15};
\draw (3.5,2.5) node {37};
\draw (4.5,2.5) node {47};
\draw (5.5,2.5) node {52};
\draw (6.5,2.5) node {60};
\draw (0.5,3.5) node {6};
\draw (1.5,3.5) node {14};
\draw (2.5,3.5) node {20};
\draw (3.5,3.5) node {54};
\draw (4.5,3.5) node {56};
\draw (5.5,3.5) node {62};
\draw (6.5,3.5) node {77};
\draw (0.5,4.5) node {8};
\draw (1.5,4.5) node {21};
\draw (2.5,4.5) node {25};
\draw (3.5,4.5) node {79};
\draw (4.5,4.5) node {85};
\draw (5.5,4.5) node {93};
\draw (6.5,4.5) node {110};
\draw (0.5,5.5) node {9};
\draw (1.5,5.5) node {26};
\draw (2.5,5.5) node {33};
\draw (3.5,5.5) node {87};
\draw (4.5,5.5) node {89};
\draw (5.5,5.5) node {133};
\draw (6.5,5.5) node {138};
\draw (0.5,6.5) node {11};
\draw (1.5,6.5) node {27};
\draw (2.5,6.5) node {53};
\draw (3.5,6.5) node {97};
\draw (4.5,6.5) node {100};
\draw (5.5,6.5) node {142};
\draw (6.5,6.5) node {155};
\ifpdf
      \fill [white,path fading=fade right] (4,-1) rectangle (6.1,6.1);
      \fill [white,path fading=fade up] (-4,5) rectangle (6.1,6.1);
\fi
 
\end{tikzpicture}
\label{fig:inf-B}
}
\end{center}
\caption{Simulated infinite Young tableaux, sampled according to Vershik-Kerov measure $\measure_{\alpha,\beta,\gamma}$ for two choices of the parameters: \protect\subref{fig:inf-A} $\alpha=(0.1,0.1,0.1,0,0,\dots)$, $\beta=(0.5,0.2,0,0,\dots)$, $\gamma=0$;
 \protect\subref{fig:inf-B} $\alpha=(0,0,\dots)$, $\beta=(0.5,0,0,\dots)$, $\gamma=0.5$.
}
\label{fig:infinite-tableau}
\end{figure}

\begin{figure}[tb]
 
\begin{center}
\begin{tikzpicture}[scale=0.3]

\coordinate (d0) at (-6,0);
\draw (d0) node {$\bm{\emptyset}$};


\coordinate (d1) at (0,0);
\draw[thick] (d1)  ++(-0.5,-0.5) rectangle +(1,1);
{\draw[ultra thick] (d1)  ++(-0.5,-0.5) rectangle +(1,1);}


\coordinate (d2) at (6,-2.5);
\draw[thick] (d2) ++(-1,-0.5) 
rectangle +(1,1) 
{[current point is local] ++(1,0) rectangle +(1,1)};
{\draw[ultra thick] (d2) ++(-1,-0.5) 
rectangle +(1,1) 
{[current point is local] ++(1,0) rectangle +(1,1)};
}

\coordinate (d11) at (6,3);
\draw[thick] (d11) ++(-0.5,-1) 
rectangle +(1,1) 
{[current point is local] ++(0,1) rectangle +(1,1)};

\coordinate (d3) at (12,-5);
\draw[thick] (d3) ++(-1.5,-0.5) 
rectangle +(1,1) 
{[current point is local] ++(1,0) rectangle +(1,1)}
{[current point is local] ++(2,0) rectangle +(1,1)};

\coordinate (d21) at (12,0);
\draw[thick] (d21) ++(-1,-1) 
rectangle +(1,1) 
{[current point is local] ++(0,1) rectangle +(1,1)}
{[current point is local] ++(1,0) rectangle +(1,1)};
{\draw[ultra thick] (d21) ++(-1,-1) 
rectangle +(1,1) 
{[current point is local] ++(0,1) rectangle +(1,1)}
{[current point is local] ++(1,0) rectangle +(1,1)};
}

\coordinate (d111) at (12,6);
\draw[thick] (d111) ++(-0.5,-1.5) 
rectangle +(1,1) 
{[current point is local] ++(0,1) rectangle +(1,1)}
{[current point is local] ++(0,2) rectangle +(1,1)};

\coordinate (d4) at (20,-7);
\draw[thick] (d4) ++(-2,-0.5) 
rectangle +(1,1) 
{[current point is local] ++(1,0) rectangle +(1,1)}
{[current point is local] ++(2,0) rectangle +(1,1)}
{[current point is local] ++(3,0) rectangle +(1,1)};

\coordinate (d31) at (20,-4);
\draw[thick] (d31) ++(-1.5,-1) 
rectangle +(1,1) 
{[current point is local] ++(0,1) rectangle +(1,1)}
{[current point is local] ++(1,0) rectangle +(1,1)}
{[current point is local] ++(2,0) rectangle +(1,1)};

\coordinate (d22) at (20,0);
\draw[thick] (d22) ++(-1,-1) 
rectangle +(1,1) 
{[current point is local] ++(0,1) rectangle +(1,1)}
{[current point is local] ++(1,0) rectangle +(1,1)}
{[current point is local] ++(1,1) rectangle +(1,1)};

\coordinate (d211) at (20,4);
\draw[thick] (d211) ++(-1,-1.5) 
rectangle +(1,1) 
{[current point is local] ++(0,1) rectangle +(1,1)}
{[current point is local] ++(1,0) rectangle +(1,1)}
{[current point is local] ++(0,2) rectangle +(1,1)};
{\draw[ultra thick] (d211) ++(-1,-1.5) 
rectangle +(1,1) 
{[current point is local] ++(0,1) rectangle +(1,1)}
{[current point is local] ++(1,0) rectangle +(1,1)}
{[current point is local] ++(0,2) rectangle +(1,1)};
}

\coordinate (d1111) at (20,8.5);
\draw[thick] (d1111) ++(-0.5,-2) 
rectangle +(1,1) 
{[current point is local] ++(0,1) rectangle +(1,1)}
{[current point is local] ++(0,2) rectangle +(1,1)}
{[current point is local] ++(0,3) rectangle +(1,1)};

\draw[->,ultra thick] ($ (d0)!1.5 cm!(d1) $) -- ($ (d1)!1.5 cm!(d0) $);

\draw[->] ($ (d1)!1.5 cm!(d2) $) -- ($ (d2) !1.5 cm! (d1)$) ;
{\draw[->,ultra thick] ($ (d1)!1.5 cm!(d2) $) -- ($ (d2) !1.5 cm! (d1)$) ;}
\draw[->] ($ (d1)!1.5 cm!(d11) $) -- ($ (d11) !1.5 cm! (d1)$) ;

\draw[->] ($ (d2)!1.5 cm!(d3) $) -- ($ (d3) !2 cm! (d2)$) ;
\draw[->] ($ (d2)!1.5 cm!(d21) $) -- ($ (d21) !1.5 cm! (d2)$) ;
{\draw[->,ultra thick] ($ (d2)!1.5 cm!(d21) $) -- ($ (d21) !1.5 cm! (d2)$) ;}
\draw[->] ($ (d11)!1.5 cm!(d21) $) -- ($ (d21) !1.5 cm! (d11)$) ;
\draw[->] ($ (d11)!1.5 cm!(d111) $) -- ($ (d111) !1.5 cm! (d11)$) ;

\draw[->] ($ (d3)!2 cm!(d4) $) -- ($ (d4) !2.5 cm! (d3)$) ;
\draw[->] ($ (d3)!2 cm!(d31) $) -- ($ (d31) !2.5 cm! (d3)$) ;
\draw[->] ($ (d21)!1.5 cm!(d31) $) -- ($ (d31) !2.5 cm! (d21)$) ;
\draw[->] ($ (d21)!1.5 cm!(d22) $) -- ($ (d22) !1.5 cm! (d21)$) ;
\draw[->] ($ (d21)!1.5 cm!(d211) $) -- ($ (d211) !1.5 cm! (d21)$) ;
{\draw[->,ultra thick] ($ (d21)!1.5 cm!(d211) $) -- ($ (d211) !1.5 cm! (d21)$) ;}
\draw[->] ($ (d111)!1.5 cm!(d211) $) -- ($ (d211) !1.5 cm! (d111)$) ;
\draw[->] ($ (d111)!1.5 cm!(d1111) $) -- ($ (d1111) !1.5 cm! (d111)$) ;

\node (infty) at (28,5) {$\cdots$};
\draw[->] ($ (d211) !1.5 cm! (infty) $) -- ($ (infty) ! 2cm ! (d211) $) ;
{\draw[->,ultra thick] ($ (d211) !1.5 cm! (infty) $) -- ($ (infty) ! 2cm ! (d211) $) ;}


\end{tikzpicture}
\end{center}
\caption{The Young graph. The highlighted diagrams form a path corresponding to the infinite Young tableau from \protect\cref{fig:inf-A}.}
\label{fig:young-graph}
\end{figure}

Vershik and Kerov \cite{VershikKerov1981} noticed that there is a natural bijective correspondence between the
indecomposable characters of the infinite symmetric group $\Sym{\infty}$ and \emph{indecomposable central measures} on the set $\tableaux$ of \emph{infinite Young tableaux};
thus Thoma's classification is equivalent to studying properties 
of some \emph{random infinite Young tableaux} (see \cref{fig:infinite-tableau}). 
These indecomposable central measures are in the focus of the current paper.
The measure on $\tableaux$ corresponding to the character $\character_{\alpha,\beta,\gamma}$ will be denoted by $\measure_{\alpha,\beta,\gamma}$; we will call it \emph{Vershik-Kerov measure}.

Any infinite Young tableau $t\in\tableaux$ can be alternatively viewed as an infinite path
$(\emptyset=\lambda^0\nearrow\lambda^1\nearrow\cdots)$ in Young graph, see \cref{fig:young-graph}. In the same paper \cite{VershikKerov1981}, Vershik and Kerov found a
beautiful interpretation of the parameters $\alpha$ and $\beta$ in Thoma simplex as \emph{asymptotic frequencies} of boxes appearing in appropriate rows and columns in such a sequence $(\lambda^0\nearrow\lambda^1\nearrow\cdots)$ of Young diagrams. We postpone the details of this result and we provide it as \cref{theo:VK-frequencies}.

\subsection{Generalized $\RSK$ algorithm}
Usually, a \emph{semistandard tableau} --- or, shortly, \emph{tableau} --- is defined as a filling of the boxes of a Young diagram (with the letters from some alphabet) in such a way that the rows and columns are, \emph{roughly speaking}, increasing.
This definition creates no difficulties as long as we consider tableaux in which the entries do not repeat. If the entries repeat, the traditional approach is to require that each row should be \emph{weakly increasing} and each column \emph{strongly increasing}; in other words a letter can appear several times in one row and can appear at most once in one column.

Kerov and Vershik \cite{KerovVershik1986} took a different approach: they declared that each letter of the alphabet can be either a \emph{row letter} (such a letter can appear several times in a row but can appear at most once in a column) or a \emph{column letter} (such a letter can appear several times in a column but can appear at most once in a row).
They also described how \emph{Robinson-Schensted-Knuth algorithm} ($\RSK$) can be adapted to this more general setup; we recall this construction in \cref{sec:generalized-schensted}. 
As we shall see below, this generalization was essential in order to give a new interpretation of the parameters of Thoma simplex.

This generalization of $\RSK$ appeared also in the work of Berele and Remmel
\cite{BereleRemmel1985} as well as Berele and Regev \cite{RereleRegev1987}, but only in a special case of finite alphabets in which 
any row letter is smaller than any column letter which is not sufficient for our purposes.

\subsection{$\RSK$ and Vershik-Kerov measures}
\label{subsec:rsk-and-central}

\subsubsection{$\RSK$ as a homomorphism}
In the usual setup, Robinson-Schensted-Knuth algorithm applied to a 
\emph{finite} sequence gives as an output a \emph{pair} of tableaux, namely the 
\emph{insertion tableau} and the \emph{recording tableau}.
Kerov and Vershik \cite{KerovVershik1986} applied Robinson-Schensted-Knuth algorithm to an \emph{infinite} sequence $(\letter_1,\letter_2,\dots) $
of letters from an arbitrary alphabet $\alphabet$ which consists of row letters and column letters. 
In this infinite setup the notion of the insertion tableau does not make sense and the outcome of
Robinson-Schensted-Knuth algorithm $\RSK(\letter_1,\letter_2,\dots)\in\tableaux$
is defined as just the \emph{recording tableau} (which is an \emph{infinite Young tableau}, see \cref{fig:infinite-tableau}). 

Kerov and Vershik \cite{KerovVershik1986} proved that if $(\Letter_1,\Letter_2,\dots)$ is a sequence of random, independent, identically distributed letters with the distribution $\alphabetmeasure$, then the distribution of the random infinite Young tableau $\RSK(\Letter_1,\Letter_2,\dots)$ coincides with the \emph{indecomposable central measure $\measure_{\alpha,\beta,\gamma}$ (Vershik-Kerov measure)} corresponding to some element $(\alpha,\beta,\gamma)$ of Thoma simplex given as follows:
$\alpha_1\geq \alpha_2\geq \cdots$ are the probabilities of the atoms of the measure $\alphabetmeasure$ on the row letters;
$\beta_1\geq \beta_2\geq \cdots$ are the probabilities of the atoms of the measure $\alphabetmeasure$ on the column letters; 
$\gamma$ is the total probability of the continuous part of $\alphabetmeasure$.
We present this result in full detail in \cref{theo:KerovVershik-RSK-homomorphism}.
This result gives another interpretation of the parameters in Thoma simplex as probabilities of atoms of the measure $\alphabetmeasure$ on the alphabet $\alphabet$.

Notice that the extension of $\RSK$ algorithm to \emph{column letters} was essential in order to recover all elements of Thoma's simplex. It is worth pointing out that the above result of Kerov and Vershik \cite{KerovVershik1986} --- contrary to the results presented in the current paper --- holds in general and does not require any additional assumptions on the alphabet $\alphabet$ and the probability distribution $\alphabetmeasure$ of the letters.

The above result of Kerov and Vershik provides a very concrete realization (or, viewed alternatively, an equivalent definition) of all indecomposable central measures on the set $\tableaux$ of infinite Young tableaux.
In other words: $\RSK$ provides a convenient \emph{homomorphism} between the following two probability spaces: from (i) the very simple \emph{product space} $ (\alphabet^\N, \mathcal{B},  \alphabetmeasure^\N)$ (i.e., i.i.d.~letters), to (ii) the probability space $(\tableaux,\mathcal{F},\measure_{\alpha,\beta,\gamma})$  of infinite Young tableaux equipped with some Vershik-Kerov measure. 
The original paper Kerov and Vershik \cite{KerovVershik1986} presents some applications of this homomorphism.

\subsubsection{$\RSK$ as an isomorphism}

It is a natural to ask if this homomorphism is, in fact, an \emph{isomorphism}. In the current paper we give a positive answer to this question under additional assumptions about the structure of the alphabet and the probability measure on it.
Namely, for a special choice of the alphabet $\alphabetjeu$ (\emph{the  \jdt alphabet}) which can be informally visualized as
\begin{equation}
\label{eq:jdt-alphabet}
\underbrace{1 < 2 < 3 < \cdots}_{\text{row letters}}< \cdots < 0.1 < \cdots< 0.9 < \cdots <\underbrace{\cdots < -3 < -2 < -1}_{\text{column letters}},
\end{equation}
and with a special choice of the probability distribution $\alphabetmeasurejeu_{\alpha,\beta,\gamma}$ on $\alphabetjeu$ (for details see \cref{subsec:alphabetjeu})
the following result holds true.

\begin{theorem}[$\RSK$ is an isomorphism of probability spaces]
\label[theorem]{theorem:isomorphism}
Let $(\alpha,\beta,\gamma)$ be an element of Thoma simplex. Then
$\RSK$ is an \emph{isomorphism} between the following two probability spaces:
\begin{itemize}
 \item $\big(\alphabetjeu^\N,\mathcal{B},(\alphabetmeasurejeu_{\alpha,\beta,\gamma})^\N\big)$, i.e., a sequence of i.i.d.~random letters of the \jdt alphabet with the distribution $\alphabetmeasurejeu_{\alpha,\beta,\gamma}$;
 \item $(\tableaux,\mathcal{F},\measure_{\alpha,\beta,\gamma})$, i.e., infinite Young tableaux with Vershik-Kerov measure $\measure_{\alpha,\beta,\gamma}$.
\end{itemize}

\end{theorem}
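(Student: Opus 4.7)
The plan is to upgrade the Kerov-Vershik homomorphism of \cref{theo:KerovVershik-RSK-homomorphism} to an isomorphism by exhibiting a measurable inverse $\Psi\colon\tableaux\to\alphabetjeu^{\N}$ satisfying $\Psi\circ\RSK=\Id$ on a set of full measure. The principal tool is an intertwining between the Bernoulli shift $S$ on sequences and Sch\"utzenberger's jeu de taquin transformation $\jdt$ on infinite tableaux, namely
\[ \jdt\circ\RSK=\RSK\circ S. \]
I would verify this identity first at the level of finite words of the Kerov-Vershik generalized $\RSK$: deleting the first letter of $(\letter_1,\letter_2,\dots)$ corresponds, in the recording tableau, to erasing the box labelled $1$ and performing a single jdt slide followed by relabelling. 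A stabilization argument (using that once a box of $t$ gets labelled, it stays in place under further insertions) lifts this to the infinite setup. Importantly, this step is purely combinatorial and holds in the general Kerov-Vershik framework, without any use of the specific jdt alphabet.

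Given the intertwining, it suffices to construct a measurable single-letter extraction $\pi_1\colon\tableaux\to\alphabetjeu$ that returns $\letter_1$ almost surely: then $\Psi(t):=\bigl(\pi_1(t),\pi_1(\jdt(t)),\pi_1(\jdt^{2}(t)),\dots\bigr)$ is a measurable left inverse to $\RSK$ on a set of full $\measure_{\alpha,\beta,\gamma}$-measure, and by measure preservation it is in fact a two-sided inverse mod null sets. The construction of $\pi_1$ is where the special structure of the alphabet \cref{eq:jdt-alphabet} and of $\alphabetmeasurejeu_{\alpha,\beta,\gamma}$ enters. The limit frequencies of rows and columns of $t$ recover the parameters $\alpha$ and $\beta$ almost surely by \cref{theo:VK-frequencies}; the asymptotic location in $t$ of the boxes whose labels are consecutive to $1$ reveals whether $\letter_1$ is a row letter, a column letter, or a continuous letter, and in the first two cases pins down which one via the natural total order on the atoms of $\alphabetmeasurejeu_{\alpha,\beta,\gamma}$; a further refinement identifies the real value of $\letter_1$ in the continuous case.

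The main obstacle will be precisely this last point: a single continuous letter occurs with probability zero, so its value must be recovered by a genuine almost-sure limit statement rather than from any finite piece of data. Here the careful design of $\alphabetjeu$ (continuous letters lying strictly between the row and column letters) together with the product structure of $\alphabetmeasurejeu_{\alpha,\beta,\gamma}$ is essential: it ensures that each continuous letter leaves a decodable asymptotic fingerprint in the fine structure of $t$ near the appropriate boxes, while not affecting the Vershik-Kerov gross shape. Once $\pi_1$ is constructed and its $\measure_{\alpha,\beta,\gamma}$-a.s.\ correctness is established, the intertwining of the first step combined with the Kerov-Vershik push-forward statement immediately yields that $\RSK$ is an isomorphism of the two probability spaces in the theorem.
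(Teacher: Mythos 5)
Your overall architecture is the same as the paper's: the intertwining $J\circ\RSK=\RSK\circ S$ (the paper's \cref{lem:factor-map}, lifted to infinite tableaux), plus a measurable single-letter extraction map applied along the $J$-orbit to build $\RSK^{-1}(t)=\bigl(\Psi(t),\Psi(J(t)),\dots\bigr)$, plus the Kerov--Vershik push-forward statement. That skeleton is fine, and your remark that a left inverse a.e.\ upgrades to a two-sided inverse mod null sets is also fine. The problem is that the one step you leave as a sketch --- the existence of $\pi_1$ with $\pi_1\bigl(\RSK(\Letter_1,\Letter_2,\dots)\bigr)=\Letter_1$ almost surely --- is not a routine verification; it is the technical heart of the paper, and your proposal gives no mechanism for it. Saying that ``the asymptotic location in $t$ of the boxes whose labels are consecutive to $1$'' reveals the type of $\letter_1$, or that the continuous letter ``leaves a decodable asymptotic fingerprint,'' names the desired conclusion rather than proving it. The paper's decoding is precise: $\pi_1=\Psi$ reads off the asymptote of the \jdt path of $t$, and the fact that this asymptote a.s.\ equals row $k$ when $\letter_1=k$ (with $\alpha_k>0$), column $k$ when $\letter_1=-k$ (with $\beta_k>0$), and a slope determined by $\letter_1$ in the continuous case is \cref{theo:determinism-of-jdt}, whose proof occupies Sections 4--5.

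Concretely, what is missing: (i) the duality \cref{lem:duality}, identifying the \jdt path of the recording tableau of $(\letter_1,\letter_2,\dots)$ with the positions of the newly created boxes when the reversed, order-reversed word is Schensted-inserted, which converts the problem into asymptotic determinism of insertion (\cref{theorem:insertion-determinism}); (ii) for atomic letters, the argument combining Greene's theorem, the Vershik--Kerov row/column frequency asymptotics (\cref{theo:VK-frequencies}) and the monotonicity statement \cref{proba-is-decreasing} to show the new box a.s.\ eventually lies in the correct row or column --- note this genuinely uses $\alpha_k>0$ (resp.\ $\beta_k>0$), a hypothesis your sketch never invokes; (iii) for a continuous letter, the decoupling of the word into its atomic and continuous parts via standardization and stacking of insertion tableaux, the large-deviation bound \cref{lemma:row-letters-no-quadratic-height} showing the atomic letters contribute only $o(\sqrt{n})$ rows and columns, and the Plancherel-case determinism \cref{theorem:insertion-determinism-Plancherel} from the earlier paper, which is what actually recovers the real value $\letter_1\in(0,1)$ through the slope and the function $F_\Theta$; and (iv) the upgrade from convergence in probability to almost-sure convergence (monotonicity of the path plus Borel--Cantelli along $n_m=m^8$). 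Without some substitute for these steps, the claimed map $\pi_1$ is not constructed, it is not shown to be well defined a.e.\ or measurable, and the proof of the theorem does not go through.
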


Here and through the whole paper the symbol $\mathcal{B}$ will refer to the product $\sigma$-algebra on appropriate product space. The $\sigma$-algebra $\mathcal{F}$ on $\tableaux$ will be defined in \cref{subsec:infinite-young-tableaux-revisited}.

So, it is natural to ask \emph{what is the inverse to this isomorphism?}
In order to answer this question we will have to study \emph{jeu de taquin} for infinite Young tableaux.


\subsection{\Jdt}
\label{subsec:jdt}

\emph{\Jdt} (literally, \emph{teasing game}) was introduced by Sch\"utzenberger \cite{schutzenberger} for finite (semistandard) tableaux. It turned out to be a powerful tool of algebraic combinatorics, in particular for problems related to the representation theory of symmetric groups and Robinson-Schensted-Knuth algorithm.
In our previous paper \cite{RomikSniady2011} we investigated a generalization of \jdt to the setup of \emph{infinite Young tableaux}. We will recall it briefly.

\begin{figure}[tb]
\begin{center}
\subfloat[]{
\begin{tikzpicture}[scale=0.7]
\small
 \clip (-0.2,-0.2) rectangle (6.9,6.9);
\fill[blue!20](5,7) -- (5,6) -- (5,5) -- (5,4) -- (5,3) -- (4,3) -- (4,2) -- (4,1) -- (3,1) -- (3,0) -- (2,0) -- (1,0) -- (0,0) -- (0,1) -- (1,1) -- (2,1) -- (2,2) -- (3,2) -- (3,3) -- (3,4) -- (4,4) -- (4,5) -- (4,6) -- (4,7) -- (4,8) -- cycle;
\begin{scope}[]
\clip[](0,0) -- (49,0) -- (49,1) -- (33,1) -- (33,2) -- (25,2) -- (25,3) -- (18,3) -- (18,4) -- (5,4) -- (5,23) -- (4,23) -- (4,30) -- (3,30) -- (3,32) -- (2,32) -- (2,38) -- (1,38) -- (1,73) -- (0,73) -- cycle;
\draw[very thin,black] (0,0) grid (7,7);
\end{scope}
\draw[very thick](0,0) -- (49,0) -- (49,1) -- (33,1) -- (33,2) -- (25,2) -- (25,3) -- (18,3) -- (18,4) -- (5,4) -- (5,23) -- (4,23) -- (4,30) -- (3,30) -- (3,32) -- (2,32) -- (2,38) -- (1,38) -- (1,73) -- (0,73) -- cycle;
\draw (0.5,0.5) node {1};
\draw (1.5,0.5) node {2};
\draw (2.5,0.5) node {3};
\draw (3.5,0.5) node {8};
\draw (4.5,0.5) node {16};
\draw (5.5,0.5) node {21};
\draw (6.5,0.5) node {28};
\draw (7.5,0.5) node {31};
\draw (0.5,1.5) node {4};
\draw (1.5,1.5) node {6};
\draw (2.5,1.5) node {7};
\draw (3.5,1.5) node {11};
\draw (4.5,1.5) node {26};
\draw (5.5,1.5) node {27};
\draw (6.5,1.5) node {41};
\draw (7.5,1.5) node {68};
\draw (0.5,2.5) node {5};
\draw (1.5,2.5) node {9};
\draw (2.5,2.5) node {14};
\draw (3.5,2.5) node {22};
\draw (4.5,2.5) node {30};
\draw (5.5,2.5) node {48};
\draw (6.5,2.5) node {58};
\draw (7.5,2.5) node {80};
\draw (0.5,3.5) node {10};
\draw (1.5,3.5) node {15};
\draw (2.5,3.5) node {17};
\draw (3.5,3.5) node {25};
\draw (4.5,3.5) node {42};
\draw (5.5,3.5) node {61};
\draw (6.5,3.5) node {119};
\draw (7.5,3.5) node {166};
\draw (0.5,4.5) node {12};
\draw (1.5,4.5) node {19};
\draw (2.5,4.5) node {20};
\draw (3.5,4.5) node {47};
\draw (4.5,4.5) node {57};
\draw (0.5,5.5) node {13};
\draw (1.5,5.5) node {24};
\draw (2.5,5.5) node {35};
\draw (3.5,5.5) node {56};
\draw (4.5,5.5) node {83};
\draw (0.5,6.5) node {18};
\draw (1.5,6.5) node {29};
\draw (2.5,6.5) node {43};
\draw (3.5,6.5) node {66};
\draw (4.5,6.5) node {85};
\draw (0.5,7.5) node {23};
\draw (1.5,7.5) node {34};
\draw (2.5,7.5) node {52};
\draw (3.5,7.5) node {72};
\draw (4.5,7.5) node {110};
\end{tikzpicture}
\label{fig-jdt-pathA}
}
\qquad
\subfloat[]{
\begin{tikzpicture}[scale=0.7]
\small
 \clip (-0.2,-0.2) rectangle (6.9,6.9);
\fill[blue!20](5,7) -- (5,6) -- (5,5) -- (5,4) -- (5,3) -- (4,3) -- (4,2) -- (4,1) -- (3,1) -- (3,0) -- (2,0) -- (1,0) -- (0,0) -- (0,1) -- (1,1) -- (2,1) -- (2,2) -- (3,2) -- (3,3) -- (3,4) -- (4,4) -- (4,5) -- (4,6) -- (4,7) -- (4,8) -- cycle;
\begin{scope}[]
\clip[](0,0) -- (49,0) -- (49,1) -- (33,1) -- (33,2) -- (25,2) -- (25,3) -- (18,3) -- (18,4) -- (5,4) -- (5,22) -- (4,22) -- (4,30) -- (3,30) -- (3,32) -- (2,32) -- (2,38) -- (1,38) -- (1,73) -- (0,73) -- cycle;
\draw[very thin,black] (0,0) grid (7,7);
\end{scope}
\draw[very thick](0,0) -- (49,0) -- (49,1) -- (33,1) -- (33,2) -- (25,2) -- (25,3) -- (18,3) -- (18,4) -- (5,4) -- (5,22) -- (4,22) -- (4,30) -- (3,30) -- (3,32) -- (2,32) -- (2,38) -- (1,38) -- (1,73) -- (0,73) -- cycle;
\draw (0.5,0.5) node {2};
\draw (1.5,0.5) node {3};
\draw (2.5,0.5) node {7};
\draw (3.5,0.5) node {8};
\draw (4.5,0.5) node {16};
\draw (5.5,0.5) node {21};
\draw (6.5,0.5) node {28};
\draw (7.5,0.5) node {31};
\draw (0.5,1.5) node {4};
\draw (1.5,1.5) node {6};
\draw (2.5,1.5) node {11};
\draw (3.5,1.5) node {22};
\draw (4.5,1.5) node {26};
\draw (5.5,1.5) node {27};
\draw (6.5,1.5) node {41};
\draw (7.5,1.5) node {68};
\draw (0.5,2.5) node {5};
\draw (1.5,2.5) node {9};
\draw (2.5,2.5) node {14};
\draw (3.5,2.5) node {25};
\draw (4.5,2.5) node {30};
\draw (5.5,2.5) node {48};
\draw (6.5,2.5) node {58};
\draw (7.5,2.5) node {80};
\draw (0.5,3.5) node {10};
\draw (1.5,3.5) node {15};
\draw (2.5,3.5) node {17};
\draw (3.5,3.5) node {42};
\draw (4.5,3.5) node {57};
\draw (5.5,3.5) node {61};
\draw (6.5,3.5) node {119};
\draw (7.5,3.5) node {166};
\draw (0.5,4.5) node {12};
\draw (1.5,4.5) node {19};
\draw (2.5,4.5) node {20};
\draw (3.5,4.5) node {47};
\draw (4.5,4.5) node {83};
\draw (0.5,5.5) node {13};
\draw (1.5,5.5) node {24};
\draw (2.5,5.5) node {35};
\draw (3.5,5.5) node {56};
\draw (4.5,5.5) node {85};
\draw (0.5,6.5) node {18};
\draw (1.5,6.5) node {29};
\draw (2.5,6.5) node {43};
\draw (3.5,6.5) node {66};
\draw (4.5,6.5) node {110};
\draw (0.5,7.5) node {23};
\draw (1.5,7.5) node {34};
\draw (2.5,7.5) node {52};
\draw (3.5,7.5) node {72};
\draw (4.5,7.5) node {145};
\end{tikzpicture} 
\label{fig-jdt-pathB}
} 
\end{center}

\caption{\protect\subref{fig-jdt-pathA} A part of an infinite Young tableau $t$. The highlighted boxes form the beginning of the \jdt path $\jpath(t)$. \protect\subref{fig-jdt-pathB} The outcome of sliding of the boxes along the highlighted \jdt path. The outcome of the \jdt transformation $J(t)$ is obtained by subtracting $1$ from every entry.}
\label{fig-jdt-path}
\end{figure}

\newcommand{\letterX}{r}
\newcommand{\letterY}{s}
\begin{figure}
\subfloat[]{
\begin{tikzpicture}[scale=1.2]
\clip (-0.3,-0.3) rectangle (2.3,2.3);
\draw[black!20] (-1,-1) grid (3,3);
 \draw[pattern color=red!50,pattern=north east lines] (0,1) +(0.1,0.1) rectangle +(0.9,0.9);
 \draw[pattern color=green!50,pattern=north west lines] (1,0) +(0.1,0.1) rectangle +(0.9,0.9);
 \draw[fill=black!5] (1,1) +(0.1,0.1) rectangle +(0.9,0.9);
 \draw[fill=black!5] (1,2) +(0.1,0.1) rectangle +(0.9,0.9);
 \draw[fill=black!5] (2,1) +(0.1,0.1) rectangle +(0.9,0.9);
 \draw[fill=black!5] (2,0) +(0.1,0.1) rectangle +(0.9,0.9);
 \draw[fill=black!5] (0,2) +(0.1,0.1) rectangle +(0.9,0.9);
 \draw[fill=black!5] (-1,0) +(0.1,0.1) rectangle +(0.9,0.9);
 \draw[fill=black!5] (0,-1) +(0.1,0.1) rectangle +(0.9,0.9);
 \draw[fill=black!5] (-1,1) +(0.1,0.1) rectangle +(0.9,0.9);
 \draw[fill=black!5] (1,-1) +(0.1,0.1) rectangle +(0.9,0.9);
 \draw[fill=black!5] (-1,2) +(0.1,0.1) rectangle +(0.9,0.9);
 \draw[fill=black!5] (2,-1) +(0.1,0.1) rectangle +(0.9,0.9);
 \draw[fill=black!5] (-1,-1) +(0.1,0.1) rectangle +(0.9,0.9);
 \draw[fill=black!5] (2,2) +(0.1,0.1) rectangle +(0.9,0.9);
 \draw (0.5,1.5) node [circle,inner sep=2pt,fill=white] {$\letterX$};
 \draw (1.5,0.5) node [circle,inner sep=2pt,fill=white] {$\letterY$};
\end{tikzpicture}
\label{subsig:jdtA}}
\hfill
\subfloat[]{
\begin{tikzpicture}[scale=1.2]
\clip (-0.3,-0.3) rectangle (2.3,2.3);
\draw[black!20] (-1,-1) grid (3,3);
 \draw[pattern color=red!50,pattern=north east lines] (0,0) +(0.1,0.1) rectangle +(0.9,0.9);
 \draw[pattern color=green!50,pattern=north west lines] (1,0) +(0.1,0.1) rectangle +(0.9,0.9);
 \draw[fill=black!5] (1,1) +(0.1,0.1) rectangle +(0.9,0.9);
 \draw[fill=black!5] (1,2) +(0.1,0.1) rectangle +(0.9,0.9);
 \draw[fill=black!5] (2,1) +(0.1,0.1) rectangle +(0.9,0.9);
 \draw[fill=black!5] (2,0) +(0.1,0.1) rectangle +(0.9,0.9);
 \draw[fill=black!5] (0,2) +(0.1,0.1) rectangle +(0.9,0.9);
 \draw[fill=black!5] (-1,0) +(0.1,0.1) rectangle +(0.9,0.9);
 \draw[fill=black!5] (0,-1) +(0.1,0.1) rectangle +(0.9,0.9);
 \draw[fill=black!5] (-1,1) +(0.1,0.1) rectangle +(0.9,0.9);
 \draw[fill=black!5] (1,-1) +(0.1,0.1) rectangle +(0.9,0.9);
 \draw[fill=black!5] (-1,2) +(0.1,0.1) rectangle +(0.9,0.9);
 \draw[fill=black!5] (2,-1) +(0.1,0.1) rectangle +(0.9,0.9);
 \draw[fill=black!5] (-1,-1) +(0.1,0.1) rectangle +(0.9,0.9);
 \draw[fill=black!5] (2,2) +(0.1,0.1) rectangle +(0.9,0.9);
 \draw (0.5,0.5) node [circle,inner sep=2pt,fill=white] {$\letterX$};
 \draw (1.5,0.5) node [circle,inner sep=2pt,fill=white] {$\letterY$};
 \draw[ultra thick,->] (0.5,1.5) -- (0.5,0.7);
\end{tikzpicture}
\label{subsig:jdtB}}
\hfill
\subfloat[]{
\begin{tikzpicture}[scale=1.2]
\clip (-0.3,-0.3) rectangle (2.3,2.3);
\draw[black!20] (-1,-1) grid (3,3);
 \draw[pattern color=red!50,pattern=north east lines] (0,1) +(0.1,0.1) rectangle +(0.9,0.9);
 \draw[pattern color=green!50,pattern=north west lines] (0,0) +(0.1,0.1) rectangle +(0.9,0.9);
 \draw[fill=black!5] (1,1) +(0.1,0.1) rectangle +(0.9,0.9);
 \draw[fill=black!5] (1,2) +(0.1,0.1) rectangle +(0.9,0.9);
 \draw[fill=black!5] (2,1) +(0.1,0.1) rectangle +(0.9,0.9);
 \draw[fill=black!5] (2,0) +(0.1,0.1) rectangle +(0.9,0.9);
 \draw[fill=black!5] (0,2) +(0.1,0.1) rectangle +(0.9,0.9);
 \draw[fill=black!5] (-1,0) +(0.1,0.1) rectangle +(0.9,0.9);
 \draw[fill=black!5] (0,-1) +(0.1,0.1) rectangle +(0.9,0.9);
 \draw[fill=black!5] (-1,1) +(0.1,0.1) rectangle +(0.9,0.9);
 \draw[fill=black!5] (1,-1) +(0.1,0.1) rectangle +(0.9,0.9);
 \draw[fill=black!5] (-1,2) +(0.1,0.1) rectangle +(0.9,0.9);
 \draw[fill=black!5] (2,-1) +(0.1,0.1) rectangle +(0.9,0.9);
 \draw[fill=black!5] (-1,-1) +(0.1,0.1) rectangle +(0.9,0.9);
 \draw[fill=black!5] (2,2) +(0.1,0.1) rectangle +(0.9,0.9);
 \draw (0.5,1.5) node[circle,inner sep=2pt,fill=white] {$\letterX$};
 \draw (0.5,0.5) node[circle,inner sep=2pt,fill=white] {$\letterY$};
 \draw[ultra thick,->] (1.5,0.5) -- (0.7,0.5);
\end{tikzpicture}
\label{subsig:jdtC}}
\caption{Elementary step of the \jdt transformation: 
\protect\subref{subsig:jdtA} the initial configuration of boxes,
\protect\subref{subsig:jdtB} the outcome of the slide in the case when $\letterX<\letterY$,
\protect\subref{subsig:jdtC} the outcome of the slide in the case when $\letterY<\letterX$.}
\label{fig:jdt}
\end{figure}

Consider an \emph{infinite Young tableau} $t\in\tableaux$, see \cref{fig-jdt-pathA}.
We remove the bottom-left corner box (the box which contains the number $1$); in this way an empty space is created.
We start sliding the boxes according to the rules presented in \cref{fig:jdt}, i.e., we always slide one of the following two boxes: the one on the right or the one on the top of the empty space, always choosing the box which has smaller contents. 
As we continue sliding, the empty space keeps moving to the top or to the right, see \cref{fig-jdt-pathB}.

The outcome of \jdt is twofold. Firstly, it is the path of the empty space $\jpath(t)=\big( \jpath_1(t), \jpath_2(t), \dots\big)$, which will be called \emph{\jdt path}. (A careful reader might object that for some tableaux the \jdt path is a \emph{finite} sequence, see \cref{fig:finite-path}. We will show in \cref{theo:paths} that in the cases of our interest this is not the case.)

\begin{figure}[tb]
\centering
\begin{tikzpicture}[scale=0.9]
\small
\clip (-0.2,-0.2) rectangle (6.8,6.8);
\fill[blue!20](4,4) -- (4,3) -- (4,2) -- (3,2) -- (3,1) -- (3,0) -- (2,0) -- (1,0) -- (0,0) -- (0,1) -- (1,1) -- (2,1) -- (2,2) -- (2,3) -- (3,3) -- (3,4) -- cycle;
\begin{scope}[]
\clip[](0,0) -- (41,0) -- (41,1) -- (36,1) -- (36,2) -- (28,2) -- (28,3) -- (4,3) -- (4,4) -- (2,4) -- (2,76) -- (1,76) -- (1,125) -- (0,125) -- cycle;
\draw[very thin,black] (0,0) grid (7,7);
\end{scope}
\draw[very thick](0,0) -- (41,0) -- (41,1) -- (36,1) -- (36,2) -- (28,2) -- (28,3) -- (4,3) -- (4,4) -- (2,4) -- (2,76) -- (1,76) -- (1,125) -- (0,125) -- cycle;
\draw (0.5,0.5) node {1};
\draw (1.5,0.5) node {2};
\draw (2.5,0.5) node {3};
\draw (3.5,0.5) node {16};
\draw (4.5,0.5) node {19};
\draw (5.5,0.5) node {23};
\draw (6.5,0.5) node {27};
\draw (7.5,0.5) node {28};
\draw (0.5,1.5) node {4};
\draw (1.5,1.5) node {7};
\draw (2.5,1.5) node {8};
\draw (3.5,1.5) node {17};
\draw (4.5,1.5) node {20};
\draw (5.5,1.5) node {24};
\draw (6.5,1.5) node {30};
\draw (7.5,1.5) node {45};
\draw (0.5,2.5) node {5};
\draw (1.5,2.5) node {9};
\draw (2.5,2.5) node {13};
\draw (3.5,2.5) node {21};
\draw (4.5,2.5) node {60};
\draw (5.5,2.5) node {61};
\draw (6.5,2.5) node {89};
\draw (7.5,2.5) node {91};
\draw (0.5,3.5) node {6};
\draw (1.5,3.5) node {32};
\draw (2.5,3.5) node {41};
\draw (3.5,3.5) node {50};
\draw (0.5,4.5) node {10};
\draw (1.5,4.5) node {42};
\draw (0.5,5.5) node {11};
\draw (1.5,5.5) node {47};
\draw (0.5,6.5) node {12};
\draw (1.5,6.5) node {53};
\draw (0.5,7.5) node {14};
\draw (1.5,7.5) node {65};
 \end{tikzpicture}
\caption{Example of an infinite Young tableau $t$ for which the corresponding \jdt path $\jpath(t)=\big( \jpath_1(t),\dots,\jpath_\ell(t) \big)$ is finite. \cref{theo:paths} shows that this cannot happen for the random Young tableaux considered in the current paper.}
\label{fig:finite-path}
\end{figure}

Secondly, after performing all slides of \jdt, we obtain an object which
looks almost like an infinite Young tableau (see \cref{fig-jdt-pathB}) except that the numbering of boxes starts with $2$ instead of $1$. Let us subtract $1$ from every entry of this ``tableau''; the outcome is a true infinite Young tableau which we denote by $J(t)$. The map $t\mapsto J(t)$ will be called \emph{\jdt transformation}.


These two outcomes of \jdt are in the focus of the current paper. 
In the following we will discuss them in more detail.

\subsection{The dynamical system of \jdt}

As we just mentioned,
one of the outcomes of \jdt applied to an infinite tableau $t\in\tableaux$ is another infinite tableau $J(t)\in\tableaux$.
This setup naturally raises questions about the iterations of the \jdt map
\[ t, \ J(t),\ J\big(J(t)\big), \dots \]
or, in other words, about the \emph{dynamical system of \jdt.} More precisely, we consider the set $\tableaux$ of infinite Young tableaux equipped with some Vershik-Kerov measure $\measure_{\alpha,\beta,\gamma}$, thus we consider the \emph{measure-preserving dynamical system} $(\tableaux,\mathcal{F},\measure_{\alpha,\beta,\gamma},J)$.
Some basic properties of this dynamical system are summarized by the following theorem.

\begin{theorem}
\label[theorem]{theo:measure-preserving-and-ergodic}
\Jdt transformation $J:\tableaux\rightarrow\tableaux$ on the probability space $(\tableaux,\mathcal{F}, \measure_{\alpha,\beta,\gamma})$ of infinite Young tableaux equipped with an arbitrary Vershik-Kerov measure is 
\begin{itemize}
\item measure preserving,
\item ergodic (i.e., every measurable set $E\in\mathcal{F}$ which is $J$-invariant fulfills $\measure_{\alpha,\beta,\gamma}(E)\in\{0,1\}$; for an introduction to the ergodic theory see \cite{Silva2008}).
\end{itemize}
\end{theorem}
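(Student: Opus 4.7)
The plan is to transport both statements from the Bernoulli side of the isomorphism in \cref{theorem:isomorphism} to the tableau side. On the product space $\bigl(\alphabetjeu^\N, \mathcal{B}, (\alphabetmeasurejeu_{\alpha,\beta,\gamma})^\N\bigr)$ consider the left shift
\[ T: (\letter_1,\letter_2,\letter_3,\dots) \longmapsto (\letter_2,\letter_3,\dots). \]
Two facts about $T$ are classical and I would use them as black boxes: $T$ preserves the product measure $(\alphabetmeasurejeu_{\alpha,\beta,\gamma})^\N$, and the Bernoulli shift is ergodic (it is in fact mixing, and ergodicity is also an immediate consequence of Kolmogorov's zero-one law applied to the tail $\sigma$-algebra, which contains every shift-invariant event up to measure zero).

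The bridge between the two systems is the intertwining relation
\[ J \circ \RSK \;=\; \RSK \circ T \qquad \text{almost surely.} \]
Informally, when the first input letter $\letter_1$ is deleted, the box of the recording tableau carrying the label $1$ disappears; the standard Schensted bumping analysis (applied to our generalized alphabet with row and column letters) shows that the residual recording tableau is obtained from $\RSK(\letter_1,\letter_2,\dots)$ precisely by performing the \jdt slide starting at the bottom-left corner and then subtracting $1$ from every entry, i.e.\ by applying $J$. This identity is well posed almost surely because \cref{theo:paths} (referenced in the excerpt) guarantees that the \jdt path is infinite with full $\measure_{\alpha,\beta,\gamma}$-measure, so no pathological truncation of the type shown in \cref{fig:finite-path} interferes.

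Granting the intertwining relation, both conclusions follow by transport of structure. For measure preservation, if $E\in\mathcal{F}$ and $A:=\RSK^{-1}(E)$, then $\RSK^{-1}(J^{-1}E) = T^{-1}A$ up to a null set, so
\[ \measure_{\alpha,\beta,\gamma}(J^{-1}E) = (\alphabetmeasurejeu_{\alpha,\beta,\gamma})^\N(T^{-1}A) = (\alphabetmeasurejeu_{\alpha,\beta,\gamma})^\N(A) = \measure_{\alpha,\beta,\gamma}(E). \]
For ergodicity, if $E$ is $J$-invariant then $A$ is $T$-invariant (mod null sets), hence $(\alphabetmeasurejeu_{\alpha,\beta,\gamma})^\N(A)\in\{0,1\}$ by ergodicity of $T$, hence $\measure_{\alpha,\beta,\gamma}(E)\in\{0,1\}$.

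The only real obstacle is establishing the intertwining identity cleanly in the infinite setting: one must verify it on finite prefixes of the input word and then pass to the limit while controlling the location of the \jdt path. The finite version is a purely combinatorial statement about the generalized $\RSK$ of Kerov--Vershik, and the limit passage is licensed by the almost-sure infiniteness of the \jdt path guaranteed by \cref{theo:paths}; once these are in place the measure-theoretic consequences reduce to the trivialities above.
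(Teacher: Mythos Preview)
Your argument is essentially the paper's own: the intertwining relation $J\circ\RSK=\RSK\circ S$ (which the paper obtains from \cref{lem:factor-map} applied to all finite prefixes) together with the Kerov--Vershik homomorphism (\cref{theo:KerovVershik-RSK-homomorphism}) gives measure preservation, and ergodicity is inherited from the Bernoulli shift. One remark on logical hygiene: you frame the proof as transport through the \emph{isomorphism} of \cref{theorem:isomorphism}, but in the paper's development that isomorphism is proved \emph{using} the measure-preserving property of $J$ (it is needed to show $\RSK^{-1}$ is well defined almost everywhere), so citing it here would be circular. Fortunately your actual computations do not use the inverse map at all---when you write $A:=\RSK^{-1}(E)$ you only use set-theoretic preimages and the pushforward identity $\measure_{\alpha,\beta,\gamma}=\RSK_*\bigl((\alphabetmeasurejeu_{\alpha,\beta,\gamma})^\N\bigr)$, i.e.\ the \emph{factor} (homomorphism) structure, which is available independently of \cref{theorem:isomorphism}. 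So the argument stands once you replace the reference to the isomorphism by the factor map.

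A smaller point: you invoke \cref{theo:paths} to ensure the \jdt path is infinite so that $J$ is well defined. In fact the identity $J\circ\RSK=\RSK\circ S$ holds for \emph{every} infinite word, with no almost-sure caveat: \cref{lem:factor-map} gives $j\bigl(Q(\letter_1,\dots,\letter_n)\bigr)=Q(\letter_2,\dots,\letter_n)$ for all $n$, and since both $\RSK$ and $J$ on an infinite tableau are determined by these finite truncations, the infinite identity follows pointwise. The paper's proof of ergodicity does cite the full isomorphism (\cref{theorem:isomorphism-dynamical}), but your factor-map version (``a factor of an ergodic system is ergodic'') is marginally more direct and equally valid.
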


The following extension of \cref{theorem:isomorphism} holds true. 

\begin{theorem}[$\RSK$ is an isomorphism of dynamical systems]
\label[theorem]{theorem:isomorphism-dynamical}
Let $(\alpha,\beta,\gamma)$ be an element of Thoma simplex. Then
$\RSK$ is an \emph{isomorphism} of the following dynamical systems:
\begin{itemize}
 \item $\big(\alphabetjeu^\N,\mathcal{B},(\alphabetmeasurejeu_{\alpha,\beta,\gamma})^\N,S\big)$, i.e., a sequence of i.i.d.~random letters from the \jdt alphabet, equipped with Bernoulli shift $S:\alphabetjeu^\N \rightarrow \alphabetjeu^\N$, defined by
 \[ S(\letter_1,\letter_2,\dots):= (\letter_2,\letter_3,\dots);\]

 \item $(\tableaux,\mathcal{F},\measure_{\alpha,\beta,\gamma},J)$, i.e., infinite Young tableaux with Vershik-Kerov measure $\measure_{\alpha,\beta,\gamma}$ equipped with \jdt transformation.
\end{itemize}

\end{theorem}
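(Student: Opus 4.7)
The plan is to reduce the theorem to \cref{theorem:isomorphism}, which already identifies $\RSK$ as a measure-space isomorphism, by establishing the almost-sure intertwining relation
\begin{equation}
\RSK \circ S \;=\; J \circ \RSK .
\label{eq:plan-intertwining}
\end{equation}
By \cref{theo:paths} the \jdt path of $\RSK(W_1,W_2,\dots)$ is infinite almost surely, so $J$ is well defined off a null set and both sides of \eqref{eq:plan-intertwining} make sense almost everywhere.

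The first ingredient is the combinatorial identity in the finite case: for every finite sequence $(w_1,\dots,w_n)$ from the \jdt alphabet,
\[\RSK(w_2,\dots,w_n) \;=\; J_{\mathrm{fin}}\!\bigl(\RSK(w_1,\dots,w_n)\bigr),\]
where $J_{\mathrm{fin}}$ denotes a single \jdt slide on a finite tableau: start the hole at the cell labelled $1$, slide until the hole reaches an outer corner, delete that corner from the shape, and subtract $1$ from every remaining entry. This is a Sch\"utzenberger-style companion to $\RSK$; in the usual SSYT setting it is textbook material, and the extension to the Kerov--Vershik mixed row/column alphabet is a routine induction on $n$ that mirrors the proof of the standard case.

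Next I pass to the infinite setting by fixing $k\in\N$ and comparing the restrictions of both sides of \eqref{eq:plan-intertwining} to entries $\le k$. Each such restriction is determined by some finite prefix $(w_1,\dots,w_N)$ with $N$ large, and the key observation is that the infinite \jdt slide in $\RSK(w_1,w_2,\dots)$ and the finite slide in $\RSK(w_1,\dots,w_N)$ make the same sequence of decisions as long as the hole still has both of its neighbours present in the truncation: when both the right and the upward neighbour of the hole have recording labels $\le N$ the two rules agree trivially, and when exactly one neighbour has label $>N$ the finite rule (slide the only available neighbour) coincides with the infinite rule (slide the smaller label), since any label $\le N$ is strictly smaller than any label $>N$. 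Combining this with the finite identity applied to the prefix shows that the two sides of \eqref{eq:plan-intertwining} have identical restrictions to entries $\le k$; letting $k\to\infty$ proves the full identity, and \cref{theorem:isomorphism} then upgrades the measure-space isomorphism to an isomorphism of the two measure-preserving dynamical systems.

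The main difficulty lies precisely in this second step: finite \jdt halts as soon as the hole hits an outer corner of the truncated shape, whereas the infinite slide continues past that corner into cells with larger recording labels, so a priori the two processes might disagree even on small entries. Verifying that they nevertheless coincide on every finite initial segment of the output --- which works exactly because ``not yet inserted'' translates into ``larger recording label than every cell currently competing for the slide'' --- is the technical heart of the argument; everything else then reduces to bookkeeping and to the two already-established results \cref{theorem:isomorphism} and \cref{theo:paths}.
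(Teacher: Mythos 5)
Your proposal is correct, and the part you actually prove --- the intertwining $J\circ\RSK=\RSK\circ S$ --- is obtained by essentially the same mechanism as in the paper; the difference is one of packaging. You treat \cref{theorem:isomorphism} as a black box and only add equivariance, which is legitimate and non-circular (the paper's proof of the measure-space isomorphism, via the explicit inverse of \cref{theorem:inverse-rsk} and the asymptotic determinism of \jdt, does not use the dynamical statement), but be aware that the paper proves \cref{theorem:isomorphism}, \cref{theorem:isomorphism-dynamical} and \cref{theorem:inverse-rsk} jointly in a single proof, so the heavy lifting you bypass is exactly what is carried out there. Your finite identity is the paper's \cref{lem:factor-map}; note, however, that the paper does not prove it by a ``routine induction'' but by the standardization trick (\cref{lem:rectification-of-a-sequence}, resting on Greene's theorem, \cref{theorem:greene}), which reduces the mixed row/column alphabet to Sch\"utzenberger's classical case of distinct letters --- this is the one place where the generalized alphabet genuinely requires care, so your one-sentence dismissal is the weakest link, even though the asserted statement is true. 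Your finite-to-infinite consistency step (labels $\le N$ always win against labels $>N$, and once both neighbours of the hole exceed $N$ no entry $\le N$ ever moves again, since the entries along a \jdt path increase) is correct and is precisely the detail the paper delegates to \cite[Section 2.4]{RomikSniady2011}, so spelling it out is a welcome addition. Finally, you should add one line explaining why $J$ preserves $\measure_{\alpha,\beta,\gamma}$ (needed for the target to be a measure-preserving system): this follows at once from the intertwining together with \cref{theo:KerovVershik-RSK-homomorphism}, as the paper observes.
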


In the following we will show explicitly the inverse map to this isomorphism. In order to do this we will have to investigate \jdt paths.

\subsection{Asymptotes of \jdt paths}

For a box $\Box=(x,y)$ of a tableau we denote by $x(\Box):=x$ the index of column of the box and by $y(\Box):=y$ the index of row of the box (the numbering of rows and columns starts with $1$).

As we already mentioned, one of the outcomes of \jdt applied to an infinite tableau $t\in\tableaux$ is the \jdt path $\jpath(t)$. The following theorem describes the asymptotic behavior of \jdt paths on random tableaux.

\begin{theorem}[Asymptotics of a \jdt path]
\label[theorem]{theo:paths}
Let $T$ be a random infinite Young tableau distributed according to some Vershik-Kerov measure $\measure_{\alpha,\beta,\gamma}$. 

Then, almost surely, \jdt path $\jpath(T)=\big( \jpath_1(T), \jpath_2(T),\dots \big)$ is an \emph{infinite} sequence (i.e., the situation from \cref{fig:finite-path} is not possible).

Furthermore, almost surely, exactly one of the following three events holds true.
\begin{enumerate}[label=\emph{(\Alph*)}]
 \item \label[empty]{enum:A}
\emph{The path stabilizes in some row $k$}; in other words $y\big(\jpath_i(T)\big)=k$ holds true for almost all $i$. This event happens with probability $\alpha_k$.
 \item \label[empty]{enum:B}
\emph{The path stabilizes in some column $k$}; in other words $x\big(\jpath_i(T)\big)=k$ holds true for almost all $i$. This event happens with probability $\beta_k$.
 \item \label[empty]{enum:C}
\emph{The path has some asymptotic slope}; 
in other words the limit 
\[ \lim_{i\to\infty} \frac{\jpath_i(T)}{\Vert \jpath_i(T) \Vert}\]
exists and thus is equal to $\big(\cos \Theta(T), \sin \Theta(T) \big)$ for some $0 < \Theta(T) < \frac{\pi}{2}$. 
This event happens with probability $\gamma$.
\end{enumerate}
\end{theorem}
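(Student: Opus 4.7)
The strategy is to transport the question from the tableau side to the i.i.d.\ sequence side via the isomorphism of dynamical systems provided by \cref{theorem:isomorphism-dynamical}. By that theorem we may assume without loss of generality that $T = \RSK(W_1, W_2, \ldots)$ for an i.i.d.\ sequence $(W_i)_{i \geq 1}$ with distribution $\alphabetmeasurejeu_{\alpha,\beta,\gamma}$ on $\alphabetjeu$. The claim I would establish is that the coarse asymptotic behavior of the \jdt path $\jpath(T)$ depends only on $W_1$ in the way dictated by \crefrange{enum:A}{enum:C}, with the three events corresponding respectively to $W_1$ being the $k$-th row letter, the $k$-th column letter, or a letter from the continuous part of $\alphabetmeasurejeu_{\alpha,\beta,\gamma}$.

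The key geometric lemma I would need is that the \jdt path $\jpath(T)$ coincides with the \emph{insertion trajectory} of $W_1$: as the letters $W_2, W_3, \ldots$ are inserted one by one into the growing insertion tableaux, the box that would be bumped if the first letter were ``undone'' traces out a sequence of positions which matches $\jpath(T)$. This is the infinite analogue of the classical fact that the \jdt slide from the $(1,1)$-box of the recording tableau inverts the insertion of the first letter. Combined with the structure of the \jdt alphabet in \eqref{eq:jdt-alphabet} and the row/column-letter insertion rules of the Kerov--Vershik generalized $\RSK$, this lemma pins down the three regimes: if $W_1$ is the $k$-th row letter, bumping keeps $W_1$'s trace inside row $k$ and $\jpath(T)$ stabilizes there; symmetrically for column letters; and if $W_1$ lies in the continuous part then its trace drifts simultaneously rightward and upward.

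Once this correspondence is in place, the three probabilities follow directly from the marginal distribution of $W_1$, since by construction $\Pr(W_1 \text{ is the $k$-th row letter}) = \alpha_k$, $\Pr(W_1 \text{ is the $k$-th column letter}) = \beta_k$, and $\Pr(W_1 \text{ is continuous}) = \gamma$. For the existence of the asymptotic slope in case \cref{enum:C} I would invoke a strong law of large numbers for the i.i.d.\ letters: among $W_2,\ldots,W_n$, the empirical counts of row letters, column letters, and of continuous letters falling on either side of $W_1$ all have deterministic limits given by the parameters $(\alpha,\beta,\gamma)$ together with the conditional law of the continuous part, and these limits control the rates at which the horizontal and vertical coordinates of the insertion trajectory grow; their ratio therefore converges almost surely. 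The Vershik--Kerov frequency theorem (\cref{theo:VK-frequencies}) enters here as the global consistency check that no mass is lost.

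The main obstacle I anticipate is ruling out the finite-path pathology of \cref{fig:finite-path}, and more generally proving that the insertion-trajectory lemma holds everywhere rather than only on an a priori full-measure subset. For the first issue I would leverage \cref{theo:measure-preserving-and-ergodic}: if $\jpath(T)$ were finite on a set of positive $\measure_{\alpha,\beta,\gamma}$-measure, then the \jdt transformation $J$ could not simultaneously be well-defined and measure-preserving on that set, a contradiction. Ergodicity then lets us promote the three mutually exclusive asymptotic behaviors from statements holding almost surely on $J$-invariant subsets to the sharp probabilities $\alpha_k$, $\beta_k$, and $\gamma$ identified via $W_1$.
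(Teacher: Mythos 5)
Your high-level architecture matches the paper's: realize $T=\RSK(\Letter_1,\Letter_2,\dots)$ for i.i.d.\ letters, show that the coarse behavior of the \jdt path is dictated by $\Letter_1$, and read off the probabilities $\alpha_k,\beta_k,\gamma$ from the law of $\Letter_1$. (One caveat: for the WLOG step you only need, and should only cite, the Kerov--Vershik homomorphism \cref{theo:KerovVershik-RSK-homomorphism}; invoking the isomorphism \cref{theorem:isomorphism-dynamical} risks circularity, since that theorem is itself proved with the same machinery.) However, the proposal leaves the entire technical content of the proof unaccounted for. The precise form of your ``key geometric lemma'' is \cref{lem:duality}: the $n$-th box of the (lazily parametrized) \jdt path of $\RSK(\letter_1,\dots,\letter_n)$ is the box created when $\isomorphism{\letter_1}$ is Schensted-inserted \emph{last} into the tableau built from the reversed, order-reversed word $\isomorphism{\letter_n},\dots,\isomorphism{\letter_2}$; the reversal plus the anti-isomorphism $\isomorphismItself$ between $\alphabetjeu$ and the insertion alphabet is what makes this precise, and it reduces everything to \cref{theorem:insertion-determinism}. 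That theorem is where the work is. For cases \ref{enum:A} and \ref{enum:B} the easy half is that the box lands in the first $k$ rows (resp.\ columns); the hard half --- that it is asymptotically \emph{not} in the first $k-1$ --- needs the Vershik--Kerov frequency theorem combined with the monotonicity statement \cref{proba-is-decreasing}, and your ``bumping keeps $W_1$'s trace inside row $k$'' does not supply it. For case \ref{enum:C} your appeal to a strong law of large numbers for letter counts cannot work: the box position lives at scale $\sqrt{\gamma n}$ and converges to the point $\big(X(w),Y(w)\big)$ of the Logan--Shepp--Vershik--Kerov curve, which is second-order information about the Plancherel growth process. The paper needs the full Plancherel determinism result \cref{theorem:insertion-determinism-Plancherel}, the limit-shape concentration \cref{theorem:plancherel-asymptotic-shape} with exponential rates, and a sandwiching argument obtained by splitting the standardized word into its column, neutral and row parts; none of this is recoverable from empirical frequencies of letters. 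You also do not address the upgrade from convergence in probability to almost-sure convergence, which the paper gets from monotonicity of the path (cases \ref{enum:A}, \ref{enum:B}) and from Borel--Cantelli along the subsequence $n_m=m^8$ using the rate $O(n^{-1/4})$ (case \ref{enum:C}).

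Two further steps are actually wrong rather than merely missing. First, your argument that a finite \jdt path would contradict measure preservation of $J$ fails: with the paper's definition of infinite Young tableaux (empty boxes allowed), $J(t)$ is perfectly well defined even when the path terminates at a dead end, so no contradiction arises; the paper instead deduces $\lim_n\Vert\jpathlazy_n\Vert=\infty$ almost surely directly from \cref{theo:determinism-of-jdt}, which is what rules out the situation of \cref{fig:finite-path}. Second, ergodicity plays no role in identifying the probabilities of \ref{enum:A}--\ref{enum:C}; the events are not $J$-invariant (the behavior of the path of $J(T)$ is governed by $\Letter_2$, not $\Letter_1$), and the probabilities come solely from the marginal law of $\Letter_1$, exactly as you correctly state earlier in the same paragraph.
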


\begin{figure}[tbp]
\centering
\subfile{programy-jeu-de-taquin/grafika-jdt.tex}
\caption{Simulated \jdt paths and their asymptotes (dashed lines).
Horizontal asymptotes correspond to case \ref{enum:A}, vertical asymptotes correspond to case \ref{enum:B}, sloped asymptotes correspond to case \ref{enum:C} of \cref{theo:paths}.}
\label{fig:simulated}
\end{figure}

\begin{figure}
\centering
\begin{tikzpicture}[scale=0.7]
\tiny
  \begin{scope}
    \clip (-1,-1) rectangle (9.7,9.7);

    \draw[scale=20,dashed,ultra thick] (0,0) -- (0.973639796553168, 1.60458457175326);
    \draw[scale=20,dashed,ultra thick] (0,0) -- (1.27323954473508, 1.27323954473508);
    \draw[scale=20,dashed,ultra thick] (0,0) -- (1.60458457175326, 0.973639796553168);

    \draw[dashed,ultra thick,red] (0,0.5) -- +(20,0); 
    \draw[dashed,ultra thick,red] (0,1.5) -- +(20,0); 
    \draw[dashed,ultra thick,red] (0,2.5) -- +(20,0); 
    \draw[dashed,ultra thick,blue] (0.5,0) -- +(0,20); 
    \draw[dashed,ultra thick,blue] (1.5,0) -- +(0,20); 
    \draw[dashed,ultra thick,blue] (2.5,0) -- +(0,20); 
    \fill[white] (0,0) circle (7);
    \draw[black!20] (0,0) grid (20,20);
    \draw[thick] (20,0) -- (0,0) -- (0,20);
    \end{scope}
    \draw (6.2,9.7) node[anchor=south] {$0.6$};
    \draw (9.7,9.7) node[anchor=south west] {$0.5$};
    \draw (9.7,6.2) node[anchor=west] {$0.4$};

    \draw (9.7,0.5) node[anchor=west] {$1$};
    \draw (9.7,1.5) node[anchor=west] {$2$};
    \draw (9.7,2.5) node[anchor=west] {$3$};
    \draw (0.5,9.7) node[anchor=south] {$-1$};
    \draw (1.5,9.7) node[anchor=south] {$-2$};
    \draw (2.5,9.7) node[anchor=south] {$-3$};

\end{tikzpicture}
\caption{Possible asymptotes for a \emph{\jdt path} and the corresponding values 
(elements of the alphabet $\alphabetjeu$) of the function $\Psi$. For details see \cref{theo:paths}.}
\label{fig:compactification}

\vspace{3ex}

\begin{tikzpicture}[scale=0.7]
\tiny
    \begin{scope}
    \clip (-1,-1) rectangle (9.7,9.7);

    \draw[scale=20,dashed,ultra thick] (0,0) -- (0.973639796553168, 1.60458457175326);
    \draw[scale=20,dashed,ultra thick] (0,0) -- (1.27323954473508, 1.27323954473508);
    \draw[scale=20,dashed,ultra thick] (0,0) -- (1.60458457175326, 0.973639796553168);

    \draw[dashed,ultra thick,blue] (0,0.5) -- +(20,0); 
    \draw[dashed,ultra thick,blue] (0,1.5) -- +(20,0); 
    \draw[dashed,ultra thick,blue] (0,2.5) -- +(20,0); 
    \draw[dashed,ultra thick,red] (0.5,0) -- +(0,20); 
    \draw[dashed,ultra thick,red] (1.5,0) -- +(0,20); 
    \draw[dashed,ultra thick,red] (2.5,0) -- +(0,20); 
    \fill[white] (0,0) circle (7);
    \draw[black!20] (0,0) grid (20,20);
    \draw[thick] (20,0) -- (0,0) -- (0,20);
    \end{scope}
    \draw (6.2,9.7) node[anchor=south] {$0.4$};
    \draw (9.7,9.7) node[anchor=south west] {$0.5$};
    \draw (9.7,6.2) node[anchor=west] {$0.6$};

    \draw (9.7,0.5) node[anchor=west] {$-1$};
    \draw (9.7,1.5) node[anchor=west] {$-2$};
    \draw (9.7,2.5) node[anchor=west] {$-3$};
    \draw (0.5,9.7) node[anchor=south] {$1$};
    \draw (1.5,9.7) node[anchor=south] {$2$};
    \draw (2.5,9.7) node[anchor=south] {$3$};

\end{tikzpicture}
\caption{Possible asymptotes for \emph{Schensted insertion} and the corresponding values 
(elements of the alphabet $\alphabetinsertion$) of the letter $\letter$. For details see \cref{theorem:insertion-determinism}.}
\label{fig:compactification-twisted}

\end{figure}

This theorem is illustrated in \cref{fig:simulated} where some sample \jdt paths are shown together with their asymptotes (dashed lines).
The set of all possible asymptotes is visualized in \cref{fig:compactification}.


The probability distribution of slopes $\Theta$ of \jdt paths in case \ref{enum:C} 
is universal (in the sense that it does not depend on $\alpha$, $\beta$, $\gamma$)
and is known explicitly; 
we postpone presentation of its details until
\cref{theorem:explicit-distribution-Theta}.

\subsection{The inverse of $\RSK$}

Recall that $\alphabetjeu$ is the \jdt alphabet shown in \cref{eq:jdt-alphabet}; the details of its definition are postponed to
\cref{subsec:alphabetjeu}.
We shall define now a function $\Psi:\tableaux\rightarrow\alphabetjeu$.
Let $t$ be an infinite Young tableau. We will use notations of \cref{theo:paths}. 
\[ \Psi(t):= 
\begin{cases} 
    k  & \text{if case \ref{enum:A} holds}, \\
    -k & \text{if case \ref{enum:B} holds}, \\          
F_\Theta\big( \Theta(t) \big) & \text{if case \ref{enum:C} holds,}
\end{cases}\]
where $F_\Theta$ is the cumulative distribution function of the distribution of $\Theta$
(see \cref{theorem:explicit-distribution-Theta}).

%
%
This function $\Psi$ is visualized in \cref{fig:compactification}.
\cref{theo:paths} shows that (with respect to the probability measure $\measure_{\alpha,\beta,\gamma}$) this function is well-defined almost everywhere.

\begin{theorem}
\label[theorem]{theorem:inverse-rsk}
The inverse of\/ $\RSK$ map from \cref{theorem:isomorphism} and \cref{theorem:isomorphism-dynamical}
is given (almost surely) by asymptotic slopes of \jdt in consecutive iterations of \jdt transformation $J$. More explicitly,
\begin{equation}
\label{eq:rsk-inverse}
 \RSK^{-1}(t) := \Big( \Psi(t), \Psi\big( J(t) \big), \Psi\big( J(J(t)) \big) ,\dots \Big) \in \alphabetjeu^\N. 
\end{equation}
\end{theorem}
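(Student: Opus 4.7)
The plan is to reduce the full assertion to a single almost-sure identity, and then prove that identity by connecting \jdt paths with Schensted insertion paths. By \cref{theorem:isomorphism-dynamical} we have the semiconjugacy $J \circ \RSK = \RSK \circ S$ on a set of full $(\alphabetmeasurejeu_{\alpha,\beta,\gamma})^\N$-measure, so iteratively
\[
J^k\big(\RSK(\Letter_1,\Letter_2,\dots)\big) = \RSK(\Letter_{k+1},\Letter_{k+2},\dots) \qquad \text{for every }k\geq 0
\]
almost surely. Consequently the $k$-th coordinate of the right-hand side of \eqref{eq:rsk-inverse}, namely $\Psi\big(J^k(\RSK(\Letter))\big)$, equals $\Psi\big(\RSK(\Letter_{k+1},\Letter_{k+2},\dots)\big)$. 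Since $(\alphabetmeasurejeu_{\alpha,\beta,\gamma})^\N$ is shift-invariant, every such shift has the same law, so the whole theorem reduces to the single almost-sure identity
\[
\Psi\big(\RSK(\Letter_1,\Letter_2,\dots)\big) = \Letter_1.
\]

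To establish this identity, I would link the \jdt path $\jpath\big(\RSK(\letter_1,\letter_2,\dots)\big)$, starting from the corner $(1,1)$, to the Schensted insertion trajectory of $\letter_1$ inside the tableau $\RSK(\letter_2,\letter_3,\dots)$. In the classical finite setting Sch\"utzenberger's theorem guarantees that \jdt from $(1,1)$ undoes the very first RSK insertion step: the sequence of boxes vacated by sliding is precisely the insertion path of $\letter_1$, traversed inward-to-outward. This lifts to the infinite setting by a truncation argument: the first $N$ slides of \jdt depend only on a bounded region of the tableau, so they agree with the \jdt path of $\RSK(\letter_1,\dots,\letter_n)$ for all sufficiently large $n$. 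Combining this stabilization with \cref{theo:paths} (which in particular rules out the pathology of \cref{fig:finite-path} almost surely), the asymptotic behavior of $\jpath(\RSK(\Letter_1,\Letter_2,\dots))$ coincides with the asymptotic direction of the insertion path of $\Letter_1$ into the infinite random tableau $\RSK(\Letter_2,\Letter_3,\dots)$.

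The final step is to invoke the companion insertion result (\cref{theorem:insertion-determinism}, illustrated by \cref{fig:compactification-twisted}): the asymptotic direction of the insertion path of a freshly inserted letter into an infinite Vershik--Kerov-distributed RSK tableau deterministically recovers that letter, via precisely the dictionary encoded in $\Psi$ (with the row/column swap visible when one compares \cref{fig:compactification-twisted} with \cref{fig:compactification}). A row letter $k$ yields a horizontal asymptote in row $k$, a column letter $-k$ yields a vertical asymptote in column $k$, and a continuous letter $x\in(0,1)$ yields a sloped asymptote at angle $\Theta$ with $F_\Theta(\Theta)=x$ — the last equality being built into the definition of $\alphabetmeasurejeu_{\alpha,\beta,\gamma}$ in \cref{subsec:alphabetjeu}. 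Hence $\Psi\big(\RSK(\Letter_1,\Letter_2,\dots)\big) = \Letter_1$ almost surely, which combined with the first paragraph yields the theorem.

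The main obstacle I anticipate is the rigorous matching of the two geometric pictures — ensuring not only that the finite Sch\"utzenberger identity passes to the infinite limit, but also that the asymptotic slope of the \jdt path and the asymptotic slope of the insertion path are literally the same random variable, up to a null set. This requires careful control of the truncation and confirmation that all three alternatives \ref{enum:A}, \ref{enum:B}, \ref{enum:C} are correctly matched between the \jdt picture (\cref{fig:compactification}) and the insertion picture (\cref{fig:compactification-twisted}), in particular that the row/column asymptote assignments are consistent with the convention encoded in $\Psi$.
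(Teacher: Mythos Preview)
Your overall strategy matches the paper's: reduce via the factor-map identity $J\circ\RSK=\RSK\circ S$ to the single almost-sure claim $\Psi\big(\RSK(\Letter_1,\Letter_2,\dots)\big)=\Letter_1$, and then derive this from a \jdt--insertion duality together with \cref{theorem:insertion-determinism}. That is exactly what the paper does, packaging the second step as \cref{theo:determinism-of-jdt}.

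However, your description of the duality step is not correct, and this is where the real content lies. You say the \jdt path equals ``the insertion path of $\letter_1$'' into ``$\RSK(\letter_2,\letter_3,\dots)$'', but $\RSK$ here denotes the \emph{recording} tableau, into which one does not insert letters; and there is no statement in the paper (nor, I believe, a true one) identifying the entire \jdt path on $Q$ with a bumping route. The correct duality (\cref{lem:duality}) matches only \emph{endpoints}: the lazy \jdt position $\jpathlazy_n$ on $Q(\letter,\Letter_1,\dots,\Letter_{n-1})$ equals the last box $\Box_n$ of the recording tableau $Q\big(\isomorphismItself(\Letter_{n-1}),\dots,\isomorphismItself(\Letter_1),\isomorphismItself(\letter)\big)$, i.e.\ the \emph{reversed} sequence mapped through the anti-isomorphism $\isomorphismItself:\alphabetjeu\to\alphabetinsertion$. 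It is this reversal-plus-$\isomorphismItself$ that puts you in the setting of \cref{theorem:insertion-determinism} (which is stated for $\alphabetinsertion$, not $\alphabetjeu$), and it is exactly the ``row/column swap'' you observed between \cref{fig:compactification} and \cref{fig:compactification-twisted}. Your ``truncation argument'' and appeal to \cref{theo:paths} do not substitute for this; moreover, \cref{theorem:insertion-determinism} gives only convergence in probability, and the paper's proof of \cref{theo:determinism-of-jdt} upgrades this to almost-sure convergence using monotonicity of the \jdt path (cases \ref{enum:A}, \ref{enum:B}) and a Borel--Cantelli subsequence argument (case \ref{enum:C}) --- an upgrade your outline omits.

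A minor point: you verify only $\RSK^{-1}\circ\RSK=\Id$. The paper also records $\RSK\circ\RSK^{-1}=\Id$, which follows in one line from the first identity together with \cref{theo:KerovVershik-RSK-homomorphism}.
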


It should be stressed that the above theorem holds \emph{only in the almost sure sense}, i.e., it states that the equalities
\begin{align*}
\RSK \circ \RSK^{-1} & = \Id ,\\
\RSK^{-1} \circ \RSK & = \Id ,
\end{align*}
hold true except for measure-zero sets.

\subsection{Special case: Plancherel measure}
One of Thoma characters, the one corresponding to $\alpha=\beta=(0,0,\dots)$, $\gamma=1$ plays a special role. The corresponding indecomposable central measure is the celebrated \emph{Plancherel measure} on the set $\tableaux$ of infinite Young tableaux.
The jeu de taquin alphabet $\alphabetjeu$ in this case can be
identified simply with the unit interval $(0,1)$ equipped with the Lebesgue measure
and one does not have to consider the subtleties related to row letters and column letters. This case was considered in our previous paper \cite{RomikSniady2011};
in particular \cref{theorem:isomorphism,theo:measure-preserving-and-ergodic,theorem:isomorphism-dynamical,theo:paths,theorem:inverse-rsk} were all proved there in this special case.
The proofs for the general case presented in the current paper will heavily use the results from that paper (see \cref{theorem:insertion-determinism-Plancherel}).

\subsection{Special case: non-colliding random walks and Pitman transform}
We consider the special case when $\alpha=(\alpha_1,\dots,\alpha_\ell,0,0,\dots)$ has only finitely many non-zero entries, and $\beta=(0,0,\dots)$, $\gamma=0$ are zero. 
In particular, this means that as the \jdt alphabet we can take $\alphabetjeu=[\ell]=\{1,\dots,\ell\}$, thus we recover the usual version of $\RSK$ without column letters.

In this case, a random infinite word $(\Letter_1,\Letter_2,\dots)$ of i.i.d.~letters with distribution $\alphabetmeasurejeu_{\alpha,\beta,\gamma}$ can be identified with a random walk $X$ in $\Z^\ell_+$. The recording tableau $\RSK(\Letter_1,\Letter_2,\dots)$ has boxes only in the first $\ell$ rows, thus the corresponding path  $(\lambda^0\nearrow \lambda^1 \nearrow\cdots):=\RSK(\Letter_1,\Letter_2,\dots)$ in the Young graph can be also viewed as a random walk $\Lambda$ in $\Z^\ell_+$.

This setup has been studied by O'Connell and Yor \cite{OConnellYor2002}
who introduced a certain path-transformation $G^{(\ell)}$, called \emph{generalized Pitman transform}, with the property that the transformed walk $G^{(\ell)}(X)$ has the same law as the original walk $X=(X_1,\dots,X_\ell)$ conditioned never to exit the Weyl chamber $\{x: x_1 \geq \cdots \geq x_\ell\}$; such a walk can be alternatively viewed as a collection of $\ell$ random walks $X_1,\dots,X_\ell$ which are conditioned to be \emph{non-colliding}, i.e.~$X_1\geq \cdots \geq X_\ell$.
This path-transformation has been further studied by O'Connell \cite{OConnell2003} 
who has shown that Pitman transform is nothing else but $\RSK$ transform in disguise,
i.e., $\Lambda=G^{(\ell)}(X)$.
He also proved that the inverse of the map $G^{(\ell)}=\RSK$ exists and he found it explicitly. Clearly, his result is a special case of \cref{theorem:isomorphism},
however it is not immediate that his formula \cite[Corollary 3.2]{OConnell2003} for $\RSK^{-1}$ is equivalent to the one
given in the current paper (\cref{theorem:inverse-rsk}).

\subsection{Outline of the paper}

The main results of the paper (which were presented in this Introduction)
will be proved in \cref{sec:proof-of-main-results}.
All proofs will base on key \Cref{theo:determinism-of-jdt} which gives a detailed information about the \jdt path for some special random infinite Young tableau. It will be convenient to prove this result in an equivalent form as \cref{theorem:insertion-determinism}; essentially most of the current paper is just a preparation for the proof of this \cref{theorem:insertion-determinism}.
We review it briefly.

\Cref{sec:preliminaries} contains some missing notation from this Introduction and presents some wider context.

In \Cref{sec:alphabets} we present how some classical combinatorial notions can be adapted to the more general setup of alphabets containing row letters and column letters.

\Cref{sec:jdt} concerns some basic properties of \jdt.

\Cref{sec:growth} concerns typical shape of some random Young diagrams and the asymptotic determinism of Schensted insertion in the special case related to the Plancherel measure.

In \cref{sec:asymptotic-determinism-schensted} we show the key technical result, \cref{theorem:insertion-determinism} which concerns asymptotic determinism of Schensted insertion in the general case.
 
Finally, \cref{sec:proof-of-main-results} contains the proofs of the main results.

\section{Preliminaries: \\ 
Young diagrams, Young tableaux}
\label{sec:preliminaries}

%
%
%
%
%
%

\subsection{Young diagrams, Young graph}
The set of Young diagrams with $n$ boxes will be denoted by $\partitions{n}$; the set of all Young diagrams will be denoted by $\allpartitions$.

The set $\allpartitions$ of Young diagrams carries in a natural way the structure of a directed graph, which will be called \emph{Young graph}, see \cref{fig:young-graph}. Namely, for a pair of Young diagrams we write $\lambda \nearrow \mu$ if the diagram $\mu$ is obtained from $\lambda$ by adding exactly one box. The \emph{empty Young diagram} with no boxes will be denoted by $\emptyset$.

From the perspective of the asymptotic representation theory, it is very interesting to investigate the boundary of this graph. This motivates investigation of \emph{infinite paths in this graph} which, as we shall see, correspond to \emph{infinite Young tableaux}.

\subsection{Infinite Young tableaux}
\label{subsec:infinite-young-tableaux-revisited}
We use the notation $\N=\{1,2,3,\dots\}$ for the set of the natural numbers. We use so defined natural numbers to index rows and columns of Young diagrams and tableaux; in particular the first row (column) corresponds to the number $1$, etc.

An \emph{infinite Young tableau $t$} is a function 
$\mathbb{N}^2\ni (x,y) \mapsto t_{x,y}\in\{1,2,3,\dots,\infty\}$. We interpret it as a filling of the boxes of the first quadrant of the plane; the boxes filled with the symbol $\infty$ can be interpreted as empty boxes (see \cref{fig:inf-A}). We require that each finite entry (an element of the set $\{1,2,\dots\}$) appears in \emph{exactly} one box and that each row and each column is weakly increasing (from left to right and from bottom to top).
This definition differs slightly from the one from our previous paper \cite{RomikSniady2011}, where no empty boxes were allowed.

An infinite Young tableau can be viewed alternatively, as follows.
There is a bijective correspondence between infinite Young tableaux and \emph{infinite paths in the Young graph}
\begin{equation}
\label{eq:infinite-path}
 \emptyset = \lambda^0 \nearrow \lambda^1 \nearrow \cdots. 
\end{equation}
This correspondence is defined as follows: for an infinite tableau $t$ we define the Young diagram $\lambda^i$ as the collection of boxes with entries $\leq i$.

The \emph{set of infinite Young tableaux} will be denoted by $\tableaux$.
It is equipped with its natural measurable structure, namely, the minimal $\sigma$-algebra $\mathcal{F}$ of subsets of $\tableaux$
such that all the coordinate functions $t\mapsto t_{x,y}$ are measurable.

\section{Alphabets with row letters and column letters}
\label{sec:alphabets}

\subsection{Alphabets with row letters and column letters}
\label{subsec:generalized-alphabets}

Let $\alphabet=\alphabet_{c} \sqcup \alphabet_{r}$ be an \emph{alphabet} (i.e.,~a linearly ordered set).
The elements of $\alphabet_{r}$ will be called \emph{row letters} while the elements of $\alphabet_{c}$ will be called \emph{column letters} (in the original paper \cite[Section 1]{KerovVershik1986} these were called, respectively, \emph{positive} and \emph{negative}, which is not very convenient for our purposes).
We define the relationships $<_r$ and $<_c$ by
\begin{align*} 
a <_r b  \quad & \iff\quad  (a<b) \vee \big[ (a=b) \wedge a \in \alphabet_{r}  \big], \\
a <_c b  \quad & \iff\quad  (a<b) \vee \big[ (a=b) \wedge a \in \alphabet_{c}  \big] 
\end{align*}
for any $a,b\in \alphabet$. 
Notice that for any $a,b\in\alphabet$ \emph{exactly one} of the following statements is true:
$a <_r b$ or $b <_c a$.

We may also consider $\alphabet=\alphabet_{c} \sqcup \alphabet_0 \sqcup \alphabet_{r}$;
the elements of $\alphabet_0$ will be called \emph{neutral letters}.
In this case the relationships $a <_r a$ and $a<_c a$ are not well-defined for $a\in \alphabet_0$. This will not create any problems as long as any element of $\alphabet_0$ appears in the words and tableaux which we consider at most once. Alternatively, any element of $\alphabet_0$ can be regarded either as an element of $\alphabet_r$ or $\alphabet_c$.

\subsection{The \jdt alphabet}
\label{subsec:alphabetjeu}

\begin{figure}[tbp]
\centering
\begin{tikzpicture}[scale=0.9]
\begin{scope} \clip (5,0) rectangle +(1,1); \draw[line width=6pt,green] (5,0) rectangle +(1,1); \end{scope}
\begin{scope} \clip (3,1) rectangle +(1,1); \draw[line width=6pt,green] (3,1) rectangle +(1,1); \end{scope}
\begin{scope} \clip (2,2) rectangle +(1,1); \draw[line width=6pt,green] (2,2) rectangle +(1,1); \end{scope}
\begin{scope} \clip (2,3) rectangle +(1,1); \draw[line width=6pt,green] (2,3) rectangle +(1,1); \end{scope}
\begin{scope} \clip (1,4) rectangle +(1,1); \draw[line width=6pt,green] (1,4) rectangle +(1,1); \end{scope}
\begin{scope} \clip (1,5) rectangle +(1,1); \draw[line width=6pt,green] (1,5) rectangle +(1,1); \end{scope}
\fill[pattern=horizontal lines A,pattern color=blue] (0,0) rectangle +(1,1); 
\draw (0.5,0.5) node[circle,inner sep=0pt,fill=white] {1};
\fill[pattern=horizontal lines A,pattern color=blue] (1,0) rectangle +(1,1); 
\draw (1.5,0.5) node[circle,inner sep=0pt,fill=white] {1};
\fill[pattern=horizontal lines A,pattern color=blue] (2,0) rectangle +(1,1); 
\draw (2.5,0.5) node[circle,inner sep=0pt,fill=white] {1};
\fill[pattern=horizontal lines B,pattern color=blue] (3,0) rectangle +(1,1); 
\draw (3.5,0.5) node[circle,inner sep=0pt,fill=white] {2};
\fill[pattern=horizontal lines B,pattern color=blue] (4,0) rectangle +(1,1); 
\draw (4.5,0.5) node[circle,inner sep=0pt,fill=white] {2};
\fill[pattern=horizontal lines C,pattern color=blue] (5,0) rectangle +(1,1); 
\draw (5.5,0.5) node[circle,inner sep=0pt,fill=white] {3};
\draw (6.5,0.5) node[circle,inner sep=0pt,fill=white] {0.1};
\draw (7.5,0.5) node[circle,inner sep=0pt,fill=white] {0.8};
\fill[pattern=horizontal lines B,pattern color=blue] (0,1) rectangle +(1,1); 
\draw (0.5,1.5) node[circle,inner sep=0pt,fill=white] {2};
\fill[pattern=horizontal lines C,pattern color=blue] (1,1) rectangle +(1,1); 
\draw (1.5,1.5) node[circle,inner sep=0pt,fill=white] {3};
\fill[pattern=horizontal lines C,pattern color=blue] (2,1) rectangle +(1,1); 
\draw (2.5,1.5) node[circle,inner sep=0pt,fill=white] {3};
\draw (3.5,1.5) node[circle,inner sep=0pt,fill=white] {0.5};
\draw (4.5,1.5) node[circle,inner sep=0pt,fill=white] {0.6};
\fill[pattern=vertical lines C,pattern color=red] (5,1) rectangle +(1,1); 
\draw (5.5,1.5) node[circle,inner sep=0pt,fill=white] {-3};
\fill[pattern=vertical lines B,pattern color=red] (6,1) rectangle +(1,1); 
\draw (6.5,1.5) node[circle,inner sep=0pt,fill=white] {-2};
\fill[pattern=vertical lines A,pattern color=red] (7,1) rectangle +(1,1); 
\draw (7.5,1.5) node[circle,inner sep=0pt,fill=white] {-1};
\fill[pattern=horizontal lines C,pattern color=blue] (0,2) rectangle +(1,1); 
\draw (0.5,2.5) node[circle,inner sep=0pt,fill=white] {3};
\draw (1.5,2.5) node[circle,inner sep=0pt,fill=white] {0.3};
\draw (2.5,2.5) node[circle,inner sep=0pt,fill=white] {0.7};
\draw (3.5,2.5) node[circle,inner sep=0pt,fill=white] {0.9};
\fill[pattern=vertical lines C,pattern color=red] (4,2) rectangle +(1,1); 
\draw (4.5,2.5) node[circle,inner sep=0pt,fill=white] {-3};
\fill[pattern=vertical lines B,pattern color=red] (5,2) rectangle +(1,1); 
\draw (5.5,2.5) node[circle,inner sep=0pt,fill=white] {-2};
\fill[pattern=vertical lines A,pattern color=red] (6,2) rectangle +(1,1); 
\draw (6.5,2.5) node[circle,inner sep=0pt,fill=white] {-1};
\draw (0.5,3.5) node[circle,inner sep=0pt,fill=white] {0.2};
\draw (1.5,3.5) node[circle,inner sep=0pt,fill=white] {0.4};
\fill[pattern=vertical lines B,pattern color=red] (2,3) rectangle +(1,1); 
\draw (2.5,3.5) node[circle,inner sep=0pt,fill=white] {-2};
\fill[pattern=vertical lines C,pattern color=red] (0,4) rectangle +(1,1); 
\draw (0.5,4.5) node[circle,inner sep=0pt,fill=white] {-3};
\fill[pattern=vertical lines B,pattern color=red] (1,4) rectangle +(1,1); 
\draw (1.5,4.5) node[circle,inner sep=0pt,fill=white] {-2};
\fill[pattern=vertical lines C,pattern color=red] (0,5) rectangle +(1,1); 
\draw (0.5,5.5) node[circle,inner sep=0pt,fill=white] {-3};
\fill[pattern=vertical lines A,pattern color=red] (1,5) rectangle +(1,1); 
\draw (1.5,5.5) node[circle,inner sep=0pt,fill=white] {-1};
\fill[pattern=vertical lines C,pattern color=red] (0,6) rectangle +(1,1); 
\draw (0.5,6.5) node[circle,inner sep=0pt,fill=white] {-3};
\draw[very thick](0,0) -- (8,0) -- (8,2) -- (7,2) -- (7,3) -- (3,3) -- (3,4) -- (2,4) -- (2,6) -- (1,6) -- (1,7) -- (0,7) -- cycle;
\begin{scope}[]
\clip[](0,0) -- (8,0) -- (8,2) -- (7,2) -- (7,3) -- (3,3) -- (3,4) -- (2,4) -- (2,6) -- (1,6) -- (1,7) -- (0,7) -- cycle;
\draw (0,0) grid (50,50);
\end{scope}

\fill[pattern=horizontal lines B,pattern color=blue] (7,-2) rectangle +(1,1); 
\draw (7,-2) rectangle +(1,1);
\draw (7.5,-1.5) node[circle,inner sep=0pt,fill=white] {2};
\draw[->,ultra thick] (7,-1.5) .. controls (6,-1.5) and (5.5,-1.5) .. (5.5,-0.5);
 
\end{tikzpicture}
\caption{Example of a tableau in the \jdt alphabet $\alphabetjeu$, see \cref{subsec:alphabetjeu}. Row letters are marked by horizontal lines, column letters are marked by vertical lines.
Highlighted boxes form the bumping route when letter $2$ is inserted into tableau.}
\label{fig:tableauJEU-a}

\vspace{5ex}

\centering
\begin{tikzpicture}[scale=0.9]
\begin{scope} \clip (5,0) rectangle +(1,1); \draw[line width=6pt,green] (5,0) rectangle +(1,1); \end{scope}
\begin{scope} \clip (3,1) rectangle +(1,1); \draw[line width=6pt,green] (3,1) rectangle +(1,1); \end{scope}
\begin{scope} \clip (2,2) rectangle +(1,1); \draw[line width=6pt,green] (2,2) rectangle +(1,1); \end{scope}
\begin{scope} \clip (2,3) rectangle +(1,1); \draw[line width=6pt,green] (2,3) rectangle +(1,1); \end{scope}
\begin{scope} \clip (1,4) rectangle +(1,1); \draw[line width=6pt,green] (1,4) rectangle +(1,1); \end{scope}
\begin{scope} \clip (1,5) rectangle +(1,1); \draw[line width=6pt,green] (1,5) rectangle +(1,1); \end{scope}
\begin{scope} \clip (1,6) rectangle +(1,1); \draw[line width=6pt,green] (1,6) rectangle +(1,1); \end{scope}
\fill[pattern=horizontal lines A,pattern color=blue] (0,0) rectangle +(1,1); 
\draw (0.5,0.5) node[circle,inner sep=0pt,fill=white] {1};
\fill[pattern=horizontal lines A,pattern color=blue] (1,0) rectangle +(1,1); 
\draw (1.5,0.5) node[circle,inner sep=0pt,fill=white] {1};
\fill[pattern=horizontal lines A,pattern color=blue] (2,0) rectangle +(1,1); 
\draw (2.5,0.5) node[circle,inner sep=0pt,fill=white] {1};
\fill[pattern=horizontal lines B,pattern color=blue] (3,0) rectangle +(1,1); 
\draw (3.5,0.5) node[circle,inner sep=0pt,fill=white] {2};
\fill[pattern=horizontal lines B,pattern color=blue] (4,0) rectangle +(1,1); 
\draw (4.5,0.5) node[circle,inner sep=0pt,fill=white] {2};
\fill[pattern=horizontal lines B,pattern color=blue] (5,0) rectangle +(1,1); 
\draw (5.5,0.5) node[circle,inner sep=0pt,fill=white] {2};
\draw (6.5,0.5) node[circle,inner sep=0pt,fill=white] {0.1};
\draw (7.5,0.5) node[circle,inner sep=0pt,fill=white] {0.8};
\fill[pattern=horizontal lines B,pattern color=blue] (0,1) rectangle +(1,1); 
\draw (0.5,1.5) node[circle,inner sep=0pt,fill=white] {2};
\fill[pattern=horizontal lines C,pattern color=blue] (1,1) rectangle +(1,1); 
\draw (1.5,1.5) node[circle,inner sep=0pt,fill=white] {3};
\fill[pattern=horizontal lines C,pattern color=blue] (2,1) rectangle +(1,1); 
\draw (2.5,1.5) node[circle,inner sep=0pt,fill=white] {3};
\fill[pattern=horizontal lines C,pattern color=blue] (3,1) rectangle +(1,1); 
\draw (3.5,1.5) node[circle,inner sep=0pt,fill=white] {3};
\draw (4.5,1.5) node[circle,inner sep=0pt,fill=white] {0.6};
\fill[pattern=vertical lines C,pattern color=red] (5,1) rectangle +(1,1); 
\draw (5.5,1.5) node[circle,inner sep=0pt,fill=white] {-3};
\fill[pattern=vertical lines B,pattern color=red] (6,1) rectangle +(1,1); 
\draw (6.5,1.5) node[circle,inner sep=0pt,fill=white] {-2};
\fill[pattern=vertical lines A,pattern color=red] (7,1) rectangle +(1,1); 
\draw (7.5,1.5) node[circle,inner sep=0pt,fill=white] {-1};
\fill[pattern=horizontal lines C,pattern color=blue] (0,2) rectangle +(1,1); 
\draw (0.5,2.5) node[circle,inner sep=0pt,fill=white] {3};
\draw (1.5,2.5) node[circle,inner sep=0pt,fill=white] {0.3};
\draw (2.5,2.5) node[circle,inner sep=0pt,fill=white] {0.5};
\draw (3.5,2.5) node[circle,inner sep=0pt,fill=white] {0.9};
\fill[pattern=vertical lines C,pattern color=red] (4,2) rectangle +(1,1); 
\draw (4.5,2.5) node[circle,inner sep=0pt,fill=white] {-3};
\fill[pattern=vertical lines B,pattern color=red] (5,2) rectangle +(1,1); 
\draw (5.5,2.5) node[circle,inner sep=0pt,fill=white] {-2};
\fill[pattern=vertical lines A,pattern color=red] (6,2) rectangle +(1,1); 
\draw (6.5,2.5) node[circle,inner sep=0pt,fill=white] {-1};
\draw (0.5,3.5) node[circle,inner sep=0pt,fill=white] {0.2};
\draw (1.5,3.5) node[circle,inner sep=0pt,fill=white] {0.4};
\draw (2.5,3.5) node[circle,inner sep=0pt,fill=white] {0.7};
\fill[pattern=vertical lines C,pattern color=red] (0,4) rectangle +(1,1); 
\draw (0.5,4.5) node[circle,inner sep=0pt,fill=white] {-3};
\fill[pattern=vertical lines B,pattern color=red] (1,4) rectangle +(1,1); 
\draw (1.5,4.5) node[circle,inner sep=0pt,fill=white] {-2};
\fill[pattern=vertical lines C,pattern color=red] (0,5) rectangle +(1,1); 
\draw (0.5,5.5) node[circle,inner sep=0pt,fill=white] {-3};
\fill[pattern=vertical lines B,pattern color=red] (1,5) rectangle +(1,1); 
\draw (1.5,5.5) node[circle,inner sep=0pt,fill=white] {-2};
\fill[pattern=vertical lines C,pattern color=red] (0,6) rectangle +(1,1); 
\draw (0.5,6.5) node[circle,inner sep=0pt,fill=white] {-3};
\fill[pattern=vertical lines A,pattern color=red] (1,6) rectangle +(1,1); 
\draw (1.5,6.5) node[circle,inner sep=0pt,fill=white] {-1};
\draw[very thick](0,0) -- (8,0) -- (8,2) -- (7,2) -- (7,3) -- (3,3) -- (3,4) -- (2,4) -- (2,7) -- (0,7) -- cycle;
\begin{scope}[]
\clip[](0,0) -- (8,0) -- (8,2) -- (7,2) -- (7,3) -- (3,3) -- (3,4) -- (2,4) -- (2,7) -- (0,7) -- cycle;
\draw (0,0) grid (50,50);
\end{scope}
 
\end{tikzpicture}
\caption{The outcome of insertion of letter $2$ into the tableau from  \cref{fig:tableauJEU-a}. Highlighted boxes form the bumping route.}
\label{fig:tableauJEU-b}
\end{figure}

%

For our purposes, the most important example of an alphabet is $\alphabetjeu=\alphabetjeu_r\sqcup \alphabetjeu_0 \sqcup \alphabetjeu_c$ with $\alphabetjeu_{r}=\{1,2,3,\dots\}$, $\alphabetjeu_0=(0,1)\subset\R$ and $\alphabetjeu_{c}=\{\dots,-3,-2,-1\}$
with the linear order defined as follows: on each of the sets $\alphabetjeu_r$, $\alphabetjeu_0$, $\alphabetjeu_{c}$ we consider the natural order; we declare any element of $\alphabetjeu_{r}$ smaller than any element of $\alphabetjeu_0$, which is smaller than any element of $\alphabetjeu_{c}$.  This alphabet will be called \emph{the \jdt alphabet}; it can be visualized informally as \cref{eq:jdt-alphabet}.

If $(\alpha,\beta,\gamma)$ belongs to Thoma simplex, we define the following probability measure $\alphabetmeasurejeu_{\alpha,\beta,\gamma}$ on $\alphabetjeu$: 
\begin{itemize}
 \item 
for $i\in\{1,2,3,\dots\}=\alphabetjeu_{r}$ we set $\alphabetmeasurejeu_{\alpha,\beta,\gamma}(i)=\alpha_i$; 

\item
for $-i\in\{-1,-2,-3,\dots\}= \alphabetjeu_{c}$ we set $\alphabetmeasurejeu_{\alpha,\beta,\gamma}(-i)=\beta_i$; 

\item on $\alphabetjeu_0=(0,1)$ we take as $\alphabetmeasurejeu_{\alpha,\beta,\gamma}$ the absolutely continuous measure on the unit interval $(0,1)$ with constant density $\gamma$.
\end{itemize}

This alphabet and probability measure were used in \cref{theorem:isomorphism}
and \cref{theorem:isomorphism-dynamical}.

\subsection{Tableaux}

A \emph{(semistandard) tableau} in our new set up is defined as a filling of the entries of a Young diagram with the property that each row is \emph{$<_r$-increasing} (from left to right) and each column is \emph{$<_c$-increasing} (from bottom to top), see \cref{fig:tableauJEU-a}. This definition is equivalent to the one of Kerov and Vershik \cite[Section 2]{KerovVershik1986}.

\subsection{Robinson-Schensted-Knuth algorithm}
\label{sec:generalized-schensted}

We assume that the reader is familiar with the details of Robinson-Schensted-Knuth algorithm, which are described in several well-known sources such as \cite{Fulton1997,KnuthVol3,StanleyVol2,Sagan2001}. We provide only a brief overview below.

The \emph{(row) insertion procedure} applied to a tableau $t$ and a letter $\letter\in\alphabet$
produces a new tableau denoted $t\leftarrow \letter$. 
The new
tableau is computed by performing a succession of bumping steps
whereby $\letter$ is inserted (by a procedure which we call \emph{elementary insertion})
into the first row of the diagram, bumping an
existing entry from the first row into the second row, which results in
an entry of the second row being bumped to the third row, and so on,
until finally the entry being bumped settles down in an unoccupied
position outside the diagram.

The \emph{elementary insertion} has to be adjusted to our new setup: we insert the new letter into the row as much to the right as possible, so that the row remains \emph{$<_r$-increasing} and no gaps are created,
see \cref{fig:tableauJEU-a,fig:tableauJEU-b}.
This definition is equivalent to the one from the work of Kerov and Vershik \cite[Section 2]{KerovVershik1986}. 

The \emph{insertion tableau} $P(\letter_1,\dots,\letter_n)$ associated to a finite word is defined as the outcome of iterative insertion of the letters into the empty tableau:
\begin{equation} 
\label{eq:definition-insertion}
P(\letter_1,\dots,\letter_n):= 
\Big( \big((\emptyset \leftarrow \letter_1) \leftarrow \letter_2 \big) \leftarrow \cdots \Big) \leftarrow \letter_n.
\end{equation}

The \emph{$\RSK$ shape} of a finite word $(\letter_1,\dots,\letter_n)$ is defined as the Young diagram, equal to the shape of $P(\letter_1,\dots,\letter_n)$.

The \emph{recording tableau} $Q(\letter_1,\letter_2,\dots)$ associated to the (finite, respectively, infinite) word $(\letter_1,\letter_2,\dots)$ is defined as the (finite, respectively, infinite) Young tableau which corresponds to the (finite, respectively, infinite) path $\lambda^0\nearrow\lambda^1\nearrow\cdots$ in Young graph defined as follows: $\lambda^k$ is the $\RSK$ shape of the prefix $(\letter_1,\dots,\letter_k)$.

If $w_1,w_2,\dots$ is an infinite word, we define the outcome of 
\emph{Robinson-Schensted-Knuth algorithm} as the corresponding recording tableau.
\[ \RSK(w_1,w_2,\dots):=Q(w_1,w_2,\dots)\in\tableaux. \]

\subsection{Robinson-Schensted-Knuth algorithm as a homomorphism of probability spaces}

We present now the precise form of the result of Kerov and Vershik which we discussed in \cref{subsec:rsk-and-central}. We will use this result several times: roughly speaking, whenever a random infinite Young tableau distributed according to some indecomposable central measure (Vershik-Kerov measure) has to be used, we will use a concrete realization of such a random tableau on the probability space of a sequence of i.i.d.~random letters.

Note that the result below applies, in particular, to the special cases when (a) the alphabet $\alphabet=\alphabetjeu$ is the \jdt alphabet equipped with the probability measure $\alphabetmeasurejeu_{\alpha,\beta,\gamma}$ 
or, (b) when 
the alphabet $\alphabet=\alphabetinsertion$ is the insertion alphabet equipped with the probability measure $\alphabetmeasureinsertion_{\alpha,\beta,\gamma}$ (the definition of this alphabet is postponed until \cref{subsec:insertionalphabet}). 
In fact, these are the only two cases which will be used in the current paper, so the reader can focus her attention on them.

\begin{fact}[$\RSK$ is a \emph{homomorphism} of probability spaces, Kerov and Vershik {\cite[Theorem 2]{KerovVershik1986}}]
\label[fact]{theo:KerovVershik-RSK-homomorphism}
Let alphabet $\alphabet=\alphabet_r\sqcup\alphabet_0\sqcup\alphabet_c$ with a probability measure $\alphabetmeasure$ be given. 
Let $\alpha_1\geq \alpha_2\geq \cdots$ be the probabilities (listed in the weakly decreasing order) of the atoms of the measure $\alphabetmeasure$ restricted to $\alphabet_r$ and let
$\beta_1\geq \beta_2\geq \cdots$ be the probabilities (listed in the weakly decreasing order) of the atoms of the measure $\alphabetmeasure$ restricted to $\alphabet_c$. 
Let $\gamma$ be the total probability of the continuous part of $\alphabetmeasure$.
We assume that the probability measure $\alphabetmeasure$ restricted to $\alphabet_0$ has no atoms.

Let $\Letter_1,\Letter_2,\dots $ be a sequence of random, independent, identically distributed letters from $\alphabet$ with distribution $\alphabetmeasure$.
Then the  distribution of the recording tableau $Q(\Letter_1,\Letter_2,\dots)\in\tableaux$ coincides with  
Vershik-Kerov measure $\measure_{\alpha,\beta,\gamma}$.

In other words, $\RSK$ is a \emph{homomorphism} between the following two probability spaces:
\begin{itemize}
 \item $\big(\alphabet^\infty,\mathcal{B},\alphabetmeasure^\infty\big)$, i.e., sequences of i.i.d.~random letters;
 \item $(\tableaux,\mathcal{F},\measure_{\alpha,\beta,\gamma})$, i.e., random infinite Young tableaux with Vershik-Kerov measure $\measure_{\alpha,\beta,\gamma}$.
\end{itemize}
\end{fact}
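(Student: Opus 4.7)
The plan is twofold: first establish that the push-forward of $\alphabetmeasure^\N$ under the recording map is a \emph{central} probability measure on the Young graph, then identify the resulting central measure as $\measure_{\alpha,\beta,\gamma}$. For centrality, fix $n$ and a standard Young tableau $t$ of shape $\lambda\vdash n$. For each semistandard tableau $P$ of shape $\lambda$ in the alphabet $\alphabet$, the generalized $\RSK$ bijection (\cref{sec:generalized-schensted}) associates to the pair $(P,t)$ a unique word $(w_1,\dots,w_n)$ whose multiset of letters equals the content of $P$. The $\alphabetmeasure^n$-density at such a word factors as a product over entries of $P$, and this factor depends only on $P$, not on $t$ (for the continuous neutral part one integrates against densities, but the factorization is identical). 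Summing/integrating over all semistandard $P$ of shape $\lambda$ yields
$$ \Pr\big(Q(\Letter_1,\dots,\Letter_n)=t\big) = f_n(\lambda), $$
a function of $\lambda$ alone. Equivalently, conditionally on $\lambda^n=\lambda$ the recording tableau restricted to the first $n$ boxes is uniform over all SYT of shape $\lambda$, which is precisely the centrality condition on the Young graph.

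To identify the central measure, I would invoke the Vershik-Kerov correspondence: every central probability measure on $\tableaux$ admits a unique integral decomposition into indecomposable ones $\measure_{\alpha',\beta',\gamma'}$, parametrized by Thoma simplex. By \cref{theo:VK-frequencies} it then suffices to establish the almost sure limits
$$ \lim_{n\to\infty} \frac{\lambda^n_k}{n}=\alpha_k, \qquad \lim_{n\to\infty} \frac{(\lambda^n)'_k}{n}=\beta_k \qquad (k\geq 1), $$
under $\alphabetmeasure^\N$, where $(\lambda^n)'_k$ denotes the length of the $k$-th column of $\lambda^n$. Since these limits are deterministic constants, the ergodic decomposition of the push-forward measure must be concentrated on the single point $(\alpha,\beta,\gamma)$. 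The first limit follows from Greene's theorem adapted to the mixed alphabet: $\lambda^n_1+\cdots+\lambda^n_k$ equals the maximum total length of a union of $k$ disjoint $<_r$-weakly-increasing subsequences of $(\Letter_1,\dots,\Letter_n)$. The lower bound $\alpha_1+\cdots+\alpha_k$ comes from concatenating all occurrences of the $k$ largest row-letter atoms (which are automatically $<_r$-weakly-increasing); the matching upper bound follows from a counting argument using that each column letter and each neutral letter can appear at most once in any $<_r$-weakly-increasing subsequence. The argument for $\beta_k$ is symmetric in the $<_c$-order.

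The main technical obstacle will be the quantitative longest-increasing-subsequence analysis for the mixed alphabet. One must verify that the contribution of the continuous neutral portion of $\alphabetmeasure$ to the longest $<_r$-weakly-increasing subsequence is $o(n)$ --- classically of order $\sqrt{n}$ by Logan-Shepp/Vershik-Kerov --- so that it cannot spoil the row-length asymptotics. One must also rule out pathological interleavings of column letters and smaller row-letter atoms that would push the length above $\alpha_1+\cdots+\alpha_k$; here the single-occurrence restriction for column letters combined with a greedy/exchange argument showing that the $i$-th row-letter atom contributes at most frequency $\alpha_i$ closes the gap. Once these asymptotics are in place, indecomposability of the push-forward is automatic since the Vershik-Kerov frequencies are almost surely deterministic, and the identification with $\measure_{\alpha,\beta,\gamma}$ follows.
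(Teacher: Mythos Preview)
The paper does not prove this statement at all: it is stated as a \emph{Fact} attributed to Kerov and Vershik \cite[Theorem~2]{KerovVershik1986}, and the only commentary the paper adds is the single sentence explaining that the neutral-letter alphabet $\alphabet_0$ can be absorbed into $\alphabet_r$ or $\alphabet_c$ since, almost surely, no neutral letter repeats. So there is no ``paper's own proof'' to compare against; your proposal is an independent proof sketch of a result the paper simply imports.

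That said, your sketch follows essentially the strategy of the original Kerov--Vershik argument. The centrality step is correct and cleanly stated: the generalized $\RSK$ bijection makes $\Pr(Q=t)$ a sum over semistandard $P$ of shape $\lambda$, which manifestly depends only on $\lambda$. The identification step via \cref{theo:VK-frequencies} and the ergodic decomposition of central measures is also the right framework.

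The one place that would need real work is exactly where you flag it: the upper bound $\lambda^n_1+\cdots+\lambda^n_k \le (\alpha_1+\cdots+\alpha_k)n + o(n)$. Your remark that column and neutral letters appear at most once in any $<_r$-increasing subsequence is true but not by itself sufficient, and the phrase ``the $i$-th row-letter atom contributes at most frequency $\alpha_i$'' is a tautology rather than an argument --- what actually needs to be shown is that $k$ disjoint $<_r$-increasing subsequences cannot jointly harvest more than $(\alpha_1+\cdots+\alpha_k+o(1))n$ letters, which requires controlling how many \emph{distinct} row-letter atoms a single subsequence can profitably traverse and bounding the $o(n)$ contribution from the continuous and small-atom parts. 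This is doable (and is done in \cite{KerovVershik1986}), but your sketch does not yet contain the mechanism.
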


In order to recover this formulation from the original work of Kerov and Vershik, 
one should simply declare that any element of $\alphabet_0$ is either a row or a column letter.
Since, almost surely, any neutral letter appears in the sequence  $W_1,W_2,\dots$ at most once, this does not create any difficulties.

\subsection{Greene's theorem}

\begin{fact}[Greene's theorem]
\label[fact]{theorem:greene}
Let $\bm{w}$ be a finite word in some alphabet $\alphabet=\alphabet_r \sqcup \alphabet_c$. 
Let $\lambda=(\lambda_1,\lambda_2,\dots)$ be the $\RSK$ shape associated to $\bm{w}$. 

Then
for each $k\geq 1$, the sum of the lengths of the first $k$ rows,
$\lambda_1+\cdots+\lambda_k$,  is equal to the length of the longest subsequence of $\bm{w}$ which can be decomposed into $k$ disjoint $<_r$-increasing subsequences.

Also, the sum of the lengths of the first $k$ columns,
$\lambda'_1+\cdots+\lambda'_k$,  is equal to the length of the longest subsequence of $\bm{w}$ which can be decomposed into $k$ disjoint $<_c$-decreasing subsequences.

\end{fact}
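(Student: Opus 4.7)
The plan is to deduce the generalized Greene's theorem from the classical Greene's theorem for words with pairwise distinct letters, via a standardization tailored to the row/column letter dichotomy. Given $\bm{w}=(w_1,\dots,w_n)$ in $\alphabet$, I form the standardized word $\tilde{\bm{w}}=(\tilde w_1,\dots,\tilde w_n)$ with $\tilde w_i=(w_i,i)\in\alphabet\times\N$, equipped with the total order
\[ (a,i)\prec (b,j) \iff a<b,\ \text{ or }\ a=b\in\alphabet_r\text{ with }i<j,\ \text{ or }\ a=b\in\alphabet_c\text{ with }i>j. \]
All entries of $\tilde{\bm w}$ are then pairwise distinct, so $\tilde{\bm w}$ is a standard word.

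The definition of $\prec$ is arranged so that, for any indices $i_1<\cdots<i_m$, the subsequence $(w_{i_1},\dots,w_{i_m})$ is $<_r$-increasing in $\alphabet$ if and only if $(\tilde w_{i_1},\dots,\tilde w_{i_m})$ is $\prec$-strictly-increasing, and similarly it is $<_c$-decreasing if and only if $(\tilde w_{i_1},\dots,\tilde w_{i_m})$ is $\prec$-strictly-decreasing. This reduces to a two-term case check: a pair of equal row letters at positions $i_p<i_{p+1}$ is $<_r$-comparable in the forward direction, matched by the ``earlier index first on $\alphabet_r$'' rule of $\prec$; a pair of equal column letters is $<_c$-comparable in the reverse direction, matched by the ``later index first on $\alphabet_c$'' rule.

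Next, I would prove by induction on the length of a prefix that the $\RSK$ shapes of $\bm w$ and of $\tilde{\bm w}$ agree at every stage, with bumping routes coinciding box by box. The elementary insertion of \cref{sec:generalized-schensted} can be rephrased as: insert $\letter$ into a given row $(r_1,r_2,\dots)$ at the leftmost position $j$ satisfying $\letter<_c r_j$ (or append to the row if no such $j$ exists); this follows from the ``exactly one of $\letter<_r r_j$ or $r_j<_c\letter$'' dichotomy noted in \cref{subsec:generalized-alphabets}, combined with the paper's ``as far right as possible'' rule. For the standardized tableau the analogous rule inserts $\tilde\letter=(\letter,k)$ at the leftmost $j$ with $\tilde\letter\prec(r_j,i)$, where necessarily $i<k$. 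A short case split on whether $\letter=r_j$, and if so whether $r_j\in\alphabet_r$ or $r_j\in\alphabet_c$, using crucially that $i<k$, shows the two conditions agree. Hence the bumping routes match, the $\RSK$ shapes after each insertion match, and in particular the final shape $\lambda$ coincides for $\bm w$ and $\tilde{\bm w}$.

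Finally, applying the classical Greene's theorem to the standard word $\tilde{\bm w}$ yields $\lambda_1+\cdots+\lambda_k$ as the maximum size of a union of $k$ $\prec$-strictly-increasing subsequences of $\tilde{\bm w}$, and $\lambda'_1+\cdots+\lambda'_k$ as the analogous quantity for $\prec$-strictly-decreasing subsequences. By the compatibility of orders, these maxima coincide with the maximum size of a union of $k$ $<_r$-increasing (respectively $<_c$-decreasing) subsequences of $\bm w$, yielding the desired statement. The principal obstacle is the shape-preservation step: the asymmetric tie-breaking in the order $\prec$ --- ``earlier index first for row letters, later index first for column letters'' --- is dictated precisely by the requirement that elementary insertion for $\bm w$ and for $\tilde{\bm w}$ run in parallel, and the whole argument hinges on the fact that any newly inserted letter carries a strictly larger index than every entry already present in the tableau.
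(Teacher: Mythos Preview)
The paper does not give its own proof of this fact; it simply refers to Kerov--Vershik \cite[Proposition~1]{KerovVershik1986}. Your standardization argument is the natural self-contained route, and in spirit it is exactly the construction the paper later uses in \cref{subsec:rectification} and \cref{lem:rectification-of-a-sequence} (note, however, that the paper proves \cref{lem:rectification-of-a-sequence} \emph{using} Greene's theorem, so you are right not to invoke it here).

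That said, there is a real gap in your shape-preservation step. Your case analysis for the elementary insertion into row~$1$ is correct: the freshly inserted letter $(\letter,k)$ has index $k$ strictly larger than every index already present, and this forces the bump position to coincide with that of the generalized insertion. But the sentence ``hence the bumping routes match'' does not follow from this alone. When the bumped element $(r_j,m)$ is then inserted into row~$2$, its index $m$ is \emph{not} larger than every index in the tableau, and in particular you have not established that $m$ exceeds the index of every same-letter entry it encounters in row~$2$. Yet this is precisely what is needed: for equal letters $r_j=s$ one has $(r_j,m)\prec(s,p)\iff r_j<_c s$ if and only if $m>p$.

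The fix is to strengthen your inductive hypothesis. A convenient invariant is: \emph{in the standardized insertion tableau, whenever two boxes carry the same underlying letter, the one in the higher row has the smaller index}. This invariant holds initially, it immediately supplies the comparison $m>p$ needed at each row of the bumping cascade (since the element being inserted into row $q$ was, a moment earlier, sitting in row $q-1$), and one checks that it is preserved after the full insertion step. With this invariant in hand your argument goes through; without it, the inductive step is incomplete beyond the first row.
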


For the proof of this result for alphabets containing row letters and column letters 
we refer to the work of Kerov and Vershik \cite[Proposition~1]{KerovVershik1986}.

\subsection{Standardization of a sequence}
\label{subsec:rectification}

In the current paper we will use generalizations of several classical results concerning $\RSK$ in the setup of alphabets involving row letters and column letters. In the following we present a simple technical tool which will be used in order to show that a given result in the generalized setup is, in fact, equivalent to its classical version.

Let $\bm{\letter}=(\letter_1,\dots,\letter_n)$ with $\letter_1,\dots,\letter_n\in\alphabet$. We assume that each neutral letter appears at most once in $\bm{\letter}$. Let $\PI=(\pi_1,\dots,\pi_n)$ be a tuple of some abstract elements which are all different. We define a linear order on $\{\pi_1,\dots,\pi_n\}$ by setting for all $1\leq i<j\leq n$:
\begin{equation}
\label{eq:rectification-sequence}
  \pi_i < \pi_j\quad  \iff w_i <_r w_j; 
\end{equation}
in other words it is a lexicographic order in which we first compare  $\letter_i$ with $\letter_j$ with respect to the usual order $<$; if they are equal then we compare the indices $i$ and $j$ in the usual order (for $\letter_i\in\alphabet_r$) or in the opposite order (for $\letter_i\in\alphabet_c$).

The tuple $\PI(\letter_1,\dots,\letter_n):=(\pi_1,\dots,\pi_n)$, called \emph{standardization} of $\bm{\letter}$, is uniquely determined 
up to an order-preserving isomorphism; it can be identified with a permutation.
The following \Cref{lem:rectification-of-a-sequence} shows that with respect to $\RSK$, the original tuple and its standardization have similar properties;
the advantage of the tuple $\PI$ is that its entries are not repeated, thus we avoid the difficulties related to column letters and row letters and we can apply some classical results directly.

\begin{lemma}
\label[lemma]{lem:rectification-of-a-sequence}
The recording tableaux corresponding to the words $\bm{\letter}$ and its standardization $\PI(\bm{\letter})$ are equal.
\end{lemma}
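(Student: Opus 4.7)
The plan is to reduce the statement to an equality of the two sequences of $\RSK$ shapes associated with the prefixes of $\bm{\letter}$ and of $\PI(\bm{\letter})$, and then to verify this shape equality via Greene's theorem (\cref{theorem:greene}). Recall that the recording tableau $Q(\letter_1,\dots,\letter_n)$ is by definition the path $\emptyset=\lambda^0\nearrow\lambda^1\nearrow\cdots\nearrow\lambda^n$ in the Young graph where $\lambda^k$ is the $\RSK$ shape of $(\letter_1,\dots,\letter_k)$. Hence it suffices to show that for every $1\leq k\leq n$ the shape of $(\letter_1,\dots,\letter_k)$ equals the shape of $(\pi_1,\dots,\pi_k)$. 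Since \cref{theorem:greene} expresses each row-sum $\lambda_1+\cdots+\lambda_j$ (resp.\ column-sum $\lambda'_1+\cdots+\lambda'_j$) in terms of the maximal size of a subsequence that decomposes into $j$ disjoint $<_r$-increasing (resp.\ $<_c$-decreasing) subsequences, the problem is further reduced to showing that these maximal lengths coincide for $\bm{\letter}$ and $\PI(\bm{\letter})$.

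The key combinatorial observation is the following compatibility between the orders $<_r, <_c$ on $\alphabet$ and the strict order on the standardization. Given indices $i<j$, by the very definition \eqref{eq:rectification-sequence} we have $\pi_i<\pi_j$ if and only if $\letter_i<_r \letter_j$. Moreover the dichotomy recorded in \cref{subsec:generalized-alphabets} (for any two letters $a,b$ exactly one of $a<_r b$ or $b<_c a$ holds) implies that, still for $i<j$, $\pi_j<\pi_i$ if and only if $\letter_j<_c \letter_i$. Consequently, for any choice of indices $i_1<i_2<\cdots<i_m$, the subword $(\letter_{i_1},\dots,\letter_{i_m})$ is $<_r$-increasing if and only if $(\pi_{i_1},\dots,\pi_{i_m})$ is strictly increasing, and it is $<_c$-decreasing if and only if $(\pi_{i_1},\dots,\pi_{i_m})$ is strictly decreasing.

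This equivalence carries over immediately to disjoint unions: a family of $j$ disjoint $<_r$-increasing subsequences of $(\letter_1,\dots,\letter_k)$ corresponds under standardization to a family of $j$ disjoint strictly increasing subsequences of $(\pi_1,\dots,\pi_k)$ of the same cumulative length, and vice versa; analogously for disjoint $<_c$-decreasing versus strictly decreasing families. Therefore the Greene invariants (row-sums and column-sums) of the $\RSK$ shapes of $(\letter_1,\dots,\letter_k)$ and $(\pi_1,\dots,\pi_k)$ agree for every $k$, whence the two shapes agree. Taking $k=1,\dots,n$ we conclude that the entire paths $\lambda^0\nearrow\cdots\nearrow\lambda^n$ coincide, so the recording tableaux are equal.

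The only delicate point is the second half of the dichotomy, namely making sure that the comparison $\letter_j$ against $\letter_i$ for $i<j$ (which is the natural one that arises when asking whether $(\letter_i,\letter_j)$ is an inversion with respect to $<_c$) is consistent with the sign-reversed lexicographic rule used to define $\PI$; but this is exactly what the ``exactly one of $a<_r b$ or $b<_c a$'' statement in \cref{subsec:generalized-alphabets} guarantees, and it causes no trouble because neutral letters are by hypothesis not repeated.
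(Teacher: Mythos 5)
Your proof is correct and follows essentially the same route as the paper's: reduce to the equality of the $\RSK$ shapes of all prefixes, and deduce that equality from Greene's theorem via the order-compatibility of the standardization. The only (harmless) difference is that you verify both the row and the column Greene invariants, whereas the paper uses only the $<_r$-increasing/strictly increasing correspondence, which already suffices since the row-sums $\lambda_1+\cdots+\lambda_j$ for all $j$ determine the shape.
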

\begin{proof}
In order to show that the recording tableaux are equal, it is enough to show that for each $1\leq k\leq n$, $\RSK$ shapes associated to the prefixes  
$(\letter_1,\dots,\letter_k)$ and $(\pi_1,\dots,\pi_k)$ are equal.

Since there is a bijective correspondence between $<_r$-increasing subsequences of $\letter$ and $<$-increasing subsequences of $\pi$, i.e.,
for any $i_1< \cdots < i_\ell$ 
\[ w_{i_1} <_r \cdots <_r w_{i_\ell} \quad \iff \quad \pi_{i_1}<\cdots <\pi_{i_\ell},\]
Greene's theorem (\cref{theorem:greene}) finishes the proof.
\end{proof}

\section{Elementary properties of \jdt}
\label{sec:jdt}

\subsection{Lazy version of \jdt}
It will be convenient to work with a modified version of the \jdt path in which time is reparametrized. We call this the \emph{natural parametrization} of the \jdt path. To define it, for a given tableau $t\in\tableaux$ let $\jpathlazy_n(t) =\jpath_{K(n)}$ where $K(n)$ is the maximal number $k$ such that $t_{\jpath_k}\leq n$, i.e., the tableau entry in position $\jpath_k$ is smaller or equal than $n$. The reparametrized sequence $(\jpathlazy_n)_{n\ge1}$ is simply a slowed-down or ``lazy'' version of the \jdt path: as $n$ increases it either jumps to its right or up if in the growth process \eqref{eq:infinite-path} a box was added in one of those two positions, and stays put at other times.

\subsection{Finite version of \jdt}
For a finite Young tableau $t$ with $n\geq 1$ boxes, just like for the infinite case considered in \cref{subsec:jdt}, we remove the corner box, we perform the sequence of slidings
(which is now a \emph{finite} sequence), and we subtract $1$ from every entry of the resulting ``tableau''. The resulting Young tableau with $n-1$ boxes will be denoted by $j(t)$.

\subsection{Sch\"utzenberger's \jdt}
We will use the special name \emph{Sch\"u\-tzen\-berger's \jdt}
(which maps the set of \emph{skew tableaux} to the set of tableaux; this map associates to a skew tableau its rectification, see \cite[Section 1.2]{Fulton1997} and \cite[Section 3.7]{Sagan2001}) in order to distinguish
it from \emph{\jdt transformation} considered in the current paper (which is a map $J$, respectively $j$, on the set of infinite, respectively finite, Young tableaux).
In particular, the finite \emph{\jdt transformation} $j$ can be described equivalently as the composition of (i) removal of the corner box, (ii) \emph{Sch\"utzenberger's \jdt}, (iii) subtracting $1$ from each entry.


\subsection{Duality between \jdt and one-directional shift}

In the setup when the alphabet $\alphabet$ consists only of row letters, this result has been proved by Sch\"utzenberger \cite{Schutzenberger1963}; we will use its generalized version for alphabets consisting of row and column letters.

\begin{lemma}[Duality between \jdt and one-directional shift]
\label[lemma]{lem:factor-map}
Let the alphabet $\alphabet=\alphabet_r\sqcup\alphabet_0\sqcup\alphabet_c$ be given and
let $\letter_1,\dots,\letter_n\in\alphabet$. 
We assume that each neutral letter appears at most once in this tuple. 

Then
\[ Q(\letter_2,\letter_3,\dots,\letter_n)  = j\big( Q(\letter_1,\letter_2,\dots,\letter_n) \big),\]
where $j$ is the finite version of the \jdt map.
\end{lemma}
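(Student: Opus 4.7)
The plan is to reduce the identity to the classical case of distinct letters (a permutation) via the standardization tool from \cref{lem:rectification-of-a-sequence}, and then invoke Schützenberger's original result which already covers that case.

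Write $(\pi_1,\dots,\pi_n):=\PI(\letter_1,\dots,\letter_n)$ for the standardization of the input word. Because each neutral letter is assumed to occur at most once, the rule \eqref{eq:rectification-sequence} is unambiguous, so the standardization is well defined. Applying \cref{lem:rectification-of-a-sequence} to the full word gives
\[ Q(\letter_1,\letter_2,\dots,\letter_n) = Q(\pi_1,\pi_2,\dots,\pi_n).\]
For the shifted word, observe that the sequence $(\pi_2,\dots,\pi_n)$, viewed as a tuple of distinct elements of the linearly ordered set $\{\pi_1,\dots,\pi_n\}$, is itself (up to the canonical order-preserving isomorphism) the standardization of $(\letter_2,\dots,\letter_n)$; indeed the order-comparison rule \eqref{eq:rectification-sequence} restricted to indices $2,\dots,n$ is the same whether we compute the standardization of the whole word or of its tail. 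A second application of \cref{lem:rectification-of-a-sequence} therefore yields
\[ Q(\letter_2,\letter_3,\dots,\letter_n) = Q(\pi_2,\pi_3,\dots,\pi_n).\]

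It now suffices to establish the identity in the special case of a sequence of pairwise distinct letters, i.e.\
\[ Q(\pi_2,\pi_3,\dots,\pi_n)= j\bigl( Q(\pi_1,\pi_2,\dots,\pi_n) \bigr),\]
which is exactly the classical theorem of Schützenberger \cite{Schutzenberger1963} recalled at the beginning of this subsection: removing the corner labelled $1$ of the recording tableau and performing Schützenberger's \jdt (followed by the decrement) produces the recording tableau of the sequence obtained by deleting the first letter. Combining the three displayed equalities finishes the proof.

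The only point that requires any care is the compatibility between the standardization of the full word and the standardization of its tail. This is not really an obstacle, but is the one spot where the extension from the classical (row-letter-only) alphabet to our $\alphabet=\alphabet_r\sqcup\alphabet_0\sqcup\alphabet_c$ could, in principle, misbehave; the hypothesis that each neutral letter occurs at most once in $(\letter_1,\dots,\letter_n)$ is exactly what guarantees that no such pathology arises, and that the two applications of \cref{lem:rectification-of-a-sequence} are legitimate.
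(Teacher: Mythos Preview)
Your argument is correct and follows essentially the same route as the paper's proof: reduce via standardization and \cref{lem:rectification-of-a-sequence} to a sequence of pairwise distinct letters, check that the tail $(\pi_2,\dots,\pi_n)$ serves as a standardization of $(\letter_2,\dots,\letter_n)$, and then invoke Sch\"utzenberger's classical result. The paper cites \cite[Proposition~3.9.3]{Sagan2001} for this last step rather than \cite{Schutzenberger1963} directly, but otherwise the structure is identical.
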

\begin{proof}
In \cref{subsec:rectification} we defined the standardization $(\pi_1,\dots,\pi_n)=\PI(\letter_1,\dots,\letter_n)$. One can easily show that $(\pi_2,\dots,\pi_n)=\PI(\letter_2,\dots,\letter_n)$ (more precisely, we can define $\PI(\letter_2,\dots,\letter_n):=(\pi_2,\dots,\pi_n)$ and check that it fulfills the requirement \eqref{eq:rectification-sequence} from the definition; notice that $\PI(\letter_2,\dots,\letter_n)$ is defined only up to an order-preserving isomorphism). \Cref{lem:rectification-of-a-sequence} shows that the corresponding recording tableaux are equal:
\begin{align*} 
Q(w_1,\dots,w_n) &= Q(\pi_1,\dots,\pi_n), \\
Q(w_2,\dots,w_n) &= Q(\pi_2,\dots,\pi_n). 
\end{align*}

Thus it is enough to show the lemma for the tuple $(\letter'_1,\dots,\letter_n'):=(\pi_1,\dots,\pi_n)$.
Since $\pi_1,\dots,\pi_n$ are distinct, this is the setup considered by Sch\"utzenberger, see \cite[Proposition 3.9.3]{Sagan2001}.
\end{proof}

\section{Growth of random Young diagrams}
\label{sec:growth}

\subsection{Lengths of rows and columns of random Young diagrams}

The following is the classical result of Vershik and Kerov 
(which we discussed already in \cref{subsec:infinite-young-tableaux-revisited}) about the asymptotic growth of a random Young diagram distributed according to some indecomposable central measure.

\begin{fact}[Vershik and Kerov {\cite[Corollary 5]{VershikKerov1981}}]
\label[fact]{theo:VK-frequencies}
Let $(\alpha,\beta,\gamma)$ be an element of Thoma simplex and let
$(\Lambda^0\nearrow \Lambda^1 \nearrow \cdots)\in\tableaux$ be a random infinite tableau with the distribution given by Vershik-Kerov measure $\measure_{\alpha,\beta,\gamma}$.

Then, almost surely, for each $i\in\{1,2,\dots\}$
\begin{align*}
 \lim_{n\to\infty} \frac{\Lambda^n_i}{n} & = \alpha_i, \\ 
 \lim_{n\to\infty} \frac{(\Lambda^n)'_i}{n} & = \beta_i, \\
\end{align*}
where $\Lambda^n_i$ (respectively,  $(\Lambda^n)'_i$) denotes the number of boxes in $i$-th row (respectively, $i$-th column) of Young diagram $\Lambda^n$.
\end{fact}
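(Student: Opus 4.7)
The plan is to reduce to the concrete $\RSK$ representation provided by \cref{theo:KerovVershik-RSK-homomorphism}. Choose an alphabet $\alphabet = \alphabet_r\sqcup\alphabet_0\sqcup\alphabet_c$ and a probability measure $\alphabetmeasure$ on it such that the atoms on $\alphabet_r$ carry probabilities $\alpha_1 \geq \alpha_2 \geq \cdots$, those on $\alphabet_c$ carry $\beta_1 \geq \beta_2 \geq \cdots$, and the continuous part has total mass $\gamma$. By \cref{theo:KerovVershik-RSK-homomorphism}, we may assume that $(\Lambda^n)_{n \geq 0}$ is the sequence of $\RSK$-shapes of prefixes of an i.i.d.~sample $W_1, W_2, \dots$ from $\alphabetmeasure$; the theorem then becomes a strong law of large numbers for this shape process.

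For the lower bounds I would use Greene's theorem. Let $r_1, \dots, r_k$ denote the row atoms of probabilities $\alpha_1, \dots, \alpha_k$. Since $r_j <_r r_j$, the set of occurrences of each single atom $r_j$ in $(W_1, \dots, W_n)$ is itself a $<_r$-increasing subsequence, so their union over $j = 1, \dots, k$ gives a disjoint union of $k$ such subsequences of total length $\sum_{j \leq k}\#\{i \leq n: W_i = r_j\}$. Combining \cref{theorem:greene} with the strong law of large numbers yields
\[
\liminf_{n \to \infty} \frac{\Lambda^n_1 + \cdots + \Lambda^n_k}{n} \;\geq\; \alpha_1 + \cdots + \alpha_k \quad \text{a.s.},
\]
and the dual argument, with occurrences of single column atoms playing the role of $<_c$-decreasing subsequences, gives $\liminf_n \big((\Lambda^n)'_1 + \cdots + (\Lambda^n)'_l\big)/n \geq \beta_1 + \cdots + \beta_l$ almost surely.

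For the matching upper bounds I would combine inclusion--exclusion with a Plancherel-type estimate. The purely combinatorial inequality
\[
\big(\Lambda^n_1 + \cdots + \Lambda^n_k\big) + \big((\Lambda^n)'_1 + \cdots + (\Lambda^n)'_l\big) \;\leq\; n + kl,
\]
in which only boxes of the top-left $k\times l$ rectangle are double-counted, together with the column lower bound and taking $l\to\infty$, gives
\[
\limsup_{n \to \infty} \frac{\Lambda^n_1 + \cdots + \Lambda^n_k}{n} \;\leq\; \sum_{i\geq 1}\alpha_i + \gamma.
\]
The remaining sharpening to $\alpha_1 + \cdots + \alpha_k$ requires showing that the $\gamma$-part and the tail $\sum_{i>k}\alpha_i$ contribute only $o(n)$ to the first $k$ rows. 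For this I would invoke the classical Logan--Shepp--Vershik--Kerov estimate: for pure Plancherel measure (purely continuous letters) each row of $\Lambda^n$ has length of order $\sqrt{n}$. Conditioning on the positions and values of the atomic letters and applying this bound to the inter-atom continuous segments reduces the general case to the Plancherel one, so continuous letters contribute only $o(n)$ to any fixed row; a truncation $K,L\to\infty$ keeping the $K$ largest row atoms and $L$ largest column atoms takes care of the atomic tails. Combining lower and upper bounds for each $k$ and taking successive differences gives $\lim_n \Lambda^n_k/n = \alpha_k$; the transpose statement $\lim_n (\Lambda^n)'_k/n = \beta_k$ follows by swapping the roles of row and column letters.

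The main obstacle is this last sharpening: cleanly isolating the $o(n)$ contribution of the continuous letters and of the tail atoms to the first $k$ rows. A conceptually cleaner alternative, which avoids a hands-on truncation, is to appeal to ergodicity of $\measure_{\alpha,\beta,\gamma}$ under tail equivalence on $\tableaux$ to conclude a priori that each $\lim_n \Lambda^n_i/n$ is almost surely a deterministic constant, and then identify the constant separately (for instance via expectations of power-sum symmetric functions, linking back to the Thoma parametrization of $\measure_{\alpha,\beta,\gamma}$).
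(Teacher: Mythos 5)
The paper does not actually prove this statement: it is imported as a \emph{Fact}, with the proof delegated to Vershik and Kerov \cite[Corollary 5]{VershikKerov1981}, so your attempt can only be judged on its own merits. The first half of your argument is sound and is exactly the easy half: realizing $(\Lambda^n)$ as the $\RSK$ shapes of prefixes of an i.i.d.\ word via \cref{theo:KerovVershik-RSK-homomorphism}, noting that the occurrences of a single row atom form a $<_r$-increasing subsequence (and those of a single column atom a $<_c$-decreasing one), and combining \cref{theorem:greene} with the strong law of large numbers does give the almost sure lower bounds $\liminf_n (\Lambda^n_1+\cdots+\Lambda^n_k)/n \geq \alpha_1+\cdots+\alpha_k$ and the column analogue.

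The upper bound is where the proof breaks down, and the gap is genuine. Your inclusion--exclusion inequality only delivers $\limsup_n(\Lambda^n_1+\cdots+\Lambda^n_k)/n\leq \sum_i\alpha_i+\gamma$, and the proposed sharpening rests on the claim that the continuous letters and the tail atoms $\alpha_{k+1},\alpha_{k+2},\dots$ contribute only $o(n)$ to the first $k$ rows. For the continuous letters this can be repaired, but not the way you phrase it: the correct statement is that any single $<_r$-increasing subsequence of the word meets the continuous subword (of length $\approx\gamma n$) in a strictly increasing subsequence of i.i.d.\ continuous variables, hence in $O(\sqrt{n})$ entries; your ``conditioning on the atomic letters and applying the Plancherel bound to inter-atom continuous segments'' is not a legitimate decomposition of the $\RSK$ shape. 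For the row atoms the claim is simply false: with $\alpha=(\tfrac12,\tfrac12,0,\dots)$ and $k=1$ the longest $<_r$-increasing subsequence has length $\approx n/2$ but may consist entirely of occurrences of the second atom, so tail atoms can contribute $\Theta(n)$ to the Greene-optimal subsequences. What is true is that head and tail atoms trade off against one another, and proving this trade-off --- that the maximal total length of $k$ disjoint $<_r$-increasing subsequences of an i.i.d.\ atomic word is at most $(\alpha_1+\cdots+\alpha_k)n+o(n)$ --- is the actual content of the theorem. It can be done elementarily (parametrize each weakly increasing subsequence by the consecutive intervals on which it uses each letter, bound letter counts on intervals uniformly by a Glivenko--Cantelli argument, and use disjointness of the $k$ index sets to see that the resulting linear program is optimized by the $k$ heaviest atoms), but nothing in your write-up supplies this step, and your truncation to the $K$ largest atoms does not reduce to a case you have handled. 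Your fallback via ergodicity and identification of constants through power sums is essentially a pointer to the Vershik--Kerov ergodic method, i.e.\ to the very proof you set out to replace, and it also leaves unproved the almost sure existence of the limits.
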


%
%

We will also need the following more refined information about the growth of the number of rows and the number of columns
in the case when some parameters in Thoma simplex are zero.
\begin{lemma}
We keep notations from \cref{theo:VK-frequencies}.
\begin{itemize}
 \item 
Assume that $\beta=(0,0,\dots)$ and $\gamma=0$. 
Then for each $\epsilon>0$ there exists a constant $d>0$ such that
\[  P\left( \frac{(\Lambda^n)'_1}{\sqrt{n}} > \epsilon \right) =
O\left( e^{-d \sqrt{n}} \right). \]

\item 
Assume that $\alpha=(0,0,\dots)$ and $\gamma=0$. 
Then for each $\epsilon>0$ there exists a constant $d>0$ such that
\[  P\left( \frac{\Lambda^n_1}{\sqrt{n}} > \epsilon \right) =
O\left( e^{-d \sqrt{n}} \right). \]
\end{itemize}
\end{lemma}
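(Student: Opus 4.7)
Plan. By transposing the tableau (which swaps rows with columns and interchanges $\alpha$ with $\beta$), the second statement follows from the first applied to $\beta$ in place of $\alpha$, so I focus on the first and assume $\beta=0$, $\gamma=0$, $\sum_i\alpha_i=1$. Using \cref{theo:KerovVershik-RSK-homomorphism} I realize $\Lambda^n$ as the $\RSK$ shape of an i.i.d.\ word $(W_1,\ldots,W_n)$ drawn from $\alphabetmeasurejeu_{\alpha,0,0}$, which under the hypothesis is supported on the row letters $\alphabetjeu_r=\{1,2,\ldots\}$ with $P(W_i=v)=\alpha_v$. Greene's theorem (\cref{theorem:greene}) then identifies $(\Lambda^n)'_1$ with the length of the longest $<_c$-decreasing subsequence; since all letters are row letters, this is the longest \emph{strictly decreasing} integer subsequence of $(W_1,\ldots,W_n)$.

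The key estimate is a first-moment bound: a strictly decreasing subsequence of length $k$ is specified by $k$ positions $i_1<\cdots<i_k$ and $k$ distinct values $v_1>\cdots>v_k$, and occurs with probability $\prod_j\alpha_{v_j}$, so
\[
P\bigl((\Lambda^n)'_1\geq k\bigr)\;\leq\;\binom{n}{k}\,e_k(\alpha_1,\alpha_2,\ldots)\;\leq\;\frac{n^k}{(k!)^2},
\]
where $e_k(\alpha)\leq 1/k!$ comes from $\prod_v(1+x\alpha_v)\leq e^{x\sum_v\alpha_v}=e^x$. Stirling's formula shows that for $k=c\sqrt m$ with any fixed $c>e$, this bound is $O(e^{-d(c)\sqrt m})$ with $d(c)=2c\log(c/e)>0$; importantly, the same first-moment bound applies verbatim to any i.i.d.\ sub-word of length $m$ drawn from a probability measure.

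To cover every $\epsilon>0$, I then add a single fixed truncation. Since $p_L:=\sum_{v>L}\alpha_v\to 0$, I pick $c>e$ and then $L=L(\epsilon)$ \emph{independent of $n$} so small that $c\sqrt{2p_L}\leq\epsilon/2$. Split any strictly decreasing subsequence of the word at the threshold $L$: the portion using values $\leq L$ has length at most $L$ by distinctness, and the portion using values $>L$ is a strictly decreasing subsequence of the i.i.d.\ sub-word of length $N_{>L}^{(n)}\sim\mathrm{Bin}(n,p_L)$ drawn from the conditional probability measure on $\{L+1,L+2,\ldots\}$. A Chernoff bound gives $N_{>L}^{(n)}\leq 2np_L$ off a set of probability $e^{-\Omega(n)}$, and on this event the first-moment bound of the previous paragraph gives the tail portion of length $\leq c\sqrt{2np_L}$ off a set of probability $O(e^{-d'\sqrt n})$. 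Combining these estimates yields
\[
(\Lambda^n)'_1\;\leq\;L+c\sqrt{2np_L}\;\leq\;L+\tfrac{\epsilon}{2}\sqrt n\;\leq\;\epsilon\sqrt n
\]
for $n$ large, with overall failure probability $O(e^{-d''\sqrt n})$ as required.

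The step I expect to need the most care is the two-layer concentration bookkeeping: Chernoff on $N_{>L}^{(n)}$ gives the strong rate $e^{-\Omega(n)}$, while the first-moment bound applied to the tail sub-word gives only $e^{-d'\sqrt n}$, so the weaker of the two controls the final rate and one has to check that $d''>0$ survives uniformly as $n\to\infty$. The hypothesis $\beta=\gamma=0$ enters only to guarantee that $L(\epsilon)$ is a finite constant for every $\epsilon>0$; if $\gamma>0$, the Plancherel-type $\sqrt n$ fluctuations would dominate and the $o(\sqrt n)$-scale conclusion would fail.
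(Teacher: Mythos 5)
Your proposal is correct and follows essentially the same route as the paper's proof (which delegates the work to \cref{lemma:row-letters-no-quadratic-height}): realize $\Lambda^n$ as the $\RSK$ shape of an i.i.d.\ word via \cref{theo:KerovVershik-RSK-homomorphism}, identify $(\Lambda^n)'_1$ with the longest strictly decreasing subsequence via Greene's theorem, truncate at a fixed threshold so that the heavy atoms contribute only $O(1)$ by distinctness, and control the light tail by a first-moment bound at scale $c\sqrt{\cdot}$. The only real difference is technical: you compute the first moment directly on the i.i.d.\ word via $\binom{n}{k}e_k(\alpha)\le n^k/(k!)^2$, whereas the paper first symmetrizes the tail sub-word to a uniformly random permutation and then invokes the classical Ulam--Hammersley bound $\binom{\ell}{r}/r!$; both yield the same exponential rate, and your variant even renders the intermediate Chernoff control of the tail-word length dispensable, since $\binom{n}{k}e_k(\alpha_{>L})\le (np_L)^k/(k!)^2$ already carries the factor $p_L$.
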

\begin{proof}
Without loss of generality we may assume that the tableau $(\Lambda^0\nearrow \Lambda^1 \nearrow \cdots)=Q(\Letter_1,\Letter_2,\dots)$ is the recording tableau of an i.i.d.~sequence of random letters with a suitable probability distribution, as prescribed by \cref{theo:KerovVershik-RSK-homomorphism}. Thus the claim is equivalent to \cref{lemma:row-letters-no-quadratic-height} below.
\end{proof}

\begin{lemma} 
\label[lemma]{lemma:row-letters-no-quadratic-height}
\
\begin{itemize}
 \item 

Let $\alphabet=\alphabet_r$ be an alphabet which consist only of row letters, equipped with a probability measure $\alphabetmeasure$ which does not have any continuous part.
Let
$(\Letter_1,\Letter_2,\dots)$ be a sequence of independent, identically distributed elements of $\alphabet$ with distribution $\alphabetmeasure$.

Then, for each $\epsilon>0$ there exists some $d>0$ such that
\[ P\big( \text{$\RSK$ shape of $(\Letter_1,\dots,\Letter_n)$ has at least $\epsilon \sqrt{n}$ rows} \big)  = O\left( e^{-d\sqrt{n}}\right).\]

\item
Let $\alphabet=\alphabet_c$ be an alphabet which consist only of column letters, equipped with a probability measure $\alphabetmeasure$ which does not have any continuous part. Let
$(\Letter_1,\Letter_2,\dots)$ be a sequence of independent, identically distributed elements of $\alphabet$ with distribution $\alphabetmeasure$.

Then, for each $\epsilon>0$ there exists some $d>0$ such that
\[ P\big( \text{$\RSK$ shape of $(\Letter_1,\dots,\Letter_n)$ has at least $\epsilon \sqrt{n}$ columns} \big)  = O\left( e^{-d\sqrt{n}}\right).\]
\end{itemize}
\end{lemma}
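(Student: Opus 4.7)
The plan is to prove the first bullet in detail; the second follows by the identical argument with ``strictly increasing'' in place of ``strictly decreasing''.

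I would first invoke Greene's theorem (\cref{theorem:greene}) with $k=1$: the number of rows of the $\RSK$ shape of $(\Letter_1,\dots,\Letter_n)$ equals $\lambda'_1$, i.e.\ the length of the longest $<_c$-decreasing subsequence. Since $\alphabet=\alphabet_r$ contains no column letters, the relation $a<_c b$ reduces to $a<b$, so this is the length $D_n$ of the longest \emph{strictly} decreasing subsequence. It therefore suffices to prove $P(D_n\ge\epsilon\sqrt{n})=O(e^{-d\sqrt{n}})$.

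Because $\alphabetmeasure$ is purely atomic, enumerate its atoms with probabilities $p_1\ge p_2\ge\cdots$ summing to $1$. Given $\epsilon>0$, I would choose $M$ large enough that $q:=\sum_{i>M}p_i<\epsilon^2/100$ and call $\Letter_j$ \emph{rare} when its value is not among the top $M$ atoms. A strictly decreasing subsequence uses distinct values, so it contains at most $M$ common letters; writing $R_n$ for the number of rare letters and $D_n^{\mathrm{rare}}$ for the longest strictly decreasing subsequence among them, this gives $D_n\le M+D_n^{\mathrm{rare}}$. To reduce $D_n^{\mathrm{rare}}$ to a continuous model I would couple: let $U_1,U_2,\dots$ be i.i.d.\ uniform on $(0,1)$, independent of the $\Letter_j$'s, and set $V_j:=(\Letter_j,U_j)$ with lexicographic order. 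The $V_j$'s are almost surely distinct with an atomless joint law, so the rank pattern of any subset of them is a uniform random permutation; and a strictly decreasing subsequence of the $\Letter$'s remains strictly decreasing in the $V$'s. Hence $D_n^{\mathrm{rare}}\le L_{R_n}$, where $L_r$ denotes the longest decreasing subsequence in a uniform random permutation of length $r$.

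The argument is then completed by two standard tail bounds. By Chernoff, $R_n\sim\operatorname{Binomial}(n,q)$ satisfies $P(R_n>2qn)\le e^{-qn/3}$. A first-moment estimate yields
\[
  P(L_r\ge k)\le \binom{r}{k}\frac{1}{k!}\le\left(\frac{e^2 r}{k^{2}}\right)^{\!k},
\]
so taking $k=\lceil 3\sqrt{r}\rceil$ gives $P\bigl(L_r\ge 3\sqrt{r}\bigr)\le(e^2/9)^{3\sqrt{r}}=O(e^{-c\sqrt{r}})$ since $e^2<9$. Using the stochastic monotonicity of $L_r$ in $r$, these combine to give, off an event of probability $O(e^{-d\sqrt{n}})$, the bound $D_n\le M+3\sqrt{2qn}\le M+\tfrac{3}{\sqrt{50}}\,\epsilon\sqrt{n}$, which is at most $\epsilon\sqrt{n}$ for all $n$ sufficiently large; small $n$ are absorbed into the implicit $O(\cdot)$ constant. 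The main obstacle is securing a tail bound for $L_r$ strong enough to survive the $\sqrt{n}$ rescaling for \emph{arbitrarily small} $\epsilon$: since $L_r\approx 2\sqrt{r}$, a direct coupling of $(\Letter_j)$ with a continuous i.i.d.\ sample could only handle $\epsilon>2$. The truncation step is essential precisely because it replaces $\sqrt{n}$ by $\sqrt{qn}$ with $q$ at our disposal, at which point the elementary inequality above is already enough.
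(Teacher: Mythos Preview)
Your proof is correct and follows essentially the same route as the paper: Greene's theorem reduces the row count to the longest strictly decreasing subsequence; you truncate the top $M$ atoms so that the strictly decreasing subsequence has at most $M$ ``common'' letters plus a contribution from the rare letters; you then compare the rare letters to a uniform random permutation and apply the first-moment bound $\binom{r}{k}/k!\le(e^2r/k^2)^k$ together with a binomial tail estimate. The only cosmetic difference is the device you use to pass to a uniform permutation: the paper draws $\pi$ uniformly from the set of permutations compatible with the order of the rare word and argues by exchangeability that $\pi$ is uniform on $\Sym{\ell}$, whereas you attach independent uniform tiebreakers $U_j$ and use that an i.i.d.\ continuous sample has uniform rank pattern---these are equivalent standard tricks.
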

\begin{proof}
We will show the first part of the lemma.
Let $\alpha_1\geq \alpha_2 \geq \cdots \geq 0$ be the probabilities of the atoms of the probability measure $\alphabetmeasure$; clearly
$$ \alpha_1+\alpha_2+\cdots = 1 .$$
Let $D>0$ be a positive constant, we will fix its value at the end of the proof. Let $m$ be big enough so that
\[\alpha_1+\cdots+\alpha_m > 1- D.\]
Let $x_1,\dots,x_m\in\alphabet$ be the atoms of the measure $\alphabetmeasure$ with the biggest weights. 

Note that the case when $(\alpha_1,\alpha_2,\dots)$ contains only a finite number of non-zero entries will require later on some special attention; in this case we set $m$ to be the number of such non-zero entries; thus
\begin{equation}
\label{eq:sum-is-one}
 \alpha_1+\cdots+\alpha_m=1. 
\end{equation}

We denote by $\Word'=(\Letter'_1,\dots,\Letter'_{\ell(n)})$ the tuple $(\Letter_1,\dots,\Letter_n)$ with all entries which belong to $\{x_1,\dots,x_m\}$ removed. 
By Greene's theorem (\cref{theorem:greene}),
the number of rows of the $\RSK$ shape of $(\Letter_1,\dots,\Letter_n)$ is equal to the length of the longest $<_c$-decreasing subsequence of $(\Letter_1,\dots,\Letter_n)$. In our case, there are no column letters, so such a sequence is strictly $<$-decreasing, hence its length is bounded from above by
\[ m + \big(\text{length of the longest strictly decreasing subsequence of $\Word'$} \big).\]
Thus it remains to show that (with high probability) the second summand grows sufficiently slowly with $n$.

In the case \eqref{eq:sum-is-one} when $(\alpha_1,\alpha_2,\dots)$ contains only finitely many non-zero entries, the tuple $\Word'$ is almost surely empty and the statement of the lemma follows trivially. Thus it remains to show the lemma in the remaining case
\[ \alpha_1+\cdots+\alpha_m<1. \]
We denote by $g>0$ any constant such that
\[ \alpha_1+\cdots+\alpha_m+g <1.\]

The distribution of the random length $\ell(n)$ of the word $\Word'$ is given by a binomial distribution 
with success probability $p:=1-(\alpha_1+\cdots+\alpha_m)$ with $g<p<D$. Thus by elementary large deviations theory there exists some constant $h>0$ such that
\begin{equation}
\label{eq:law-of-large-numbers-unusual-letters}
P\left(  \frac{\ell(n)}{n} \notin (g,D) \right) = O\left( e^{-h n}\right).
\end{equation}
%
In the following we condition over $\ell=\ell(n)$ and assume that 
\begin{equation}
\label{eq:assumption}
 g n < \ell(n) < D n.  
\end{equation}

We consider the set $Z(\Word')$ of all permutations $\pi=(\pi_1,\dots,\pi_\ell)$ with the property that for any $1\leq i,j\leq \ell$
\[ (\Letter'_i \neq \Letter'_j) \quad \implies\quad \big[ (\pi_i<\pi_j) \iff (\Letter'_i < \Letter'_j) \big],\]
in other words, except for repeating letters, the order of the entries of $\pi$ should coincide with the order of entries of $\Word'$. Any such a  permutation has the property that
\begin{multline*}  \big(\text{length of the longest strictly decreasing subsequence of $\Word'$} \big) \leq \\  \big(\text{length of the longest decreasing subsequence of $\pi$} \big).
\end{multline*}

Let $\pi$ be a random element of the (random) set $Z(\Word')$ (we sample with the uniform probability). We claim that $\pi$ is uniformly distributed on the symmetric group. Indeed, the natural action of the symmetric group $\Sym{\ell}$ on the set of words of length $\ell$ (by permutation of the letters) is such that each $\sigma\in\Sym{\ell}$ maps the set $Z(\Word')$
to the set $Z\big(\sigma(\Word')\big)$. Since the words $\Word'$ and $\sigma(\Word')$ have the same probability, it follows that the probability distribution of
the random permutation $\pi$ coincides with the distribution of $\sigma \pi$.
This invariance uniquely characterizes the uniform distribution,
so the claim that $\pi$ is uniformly distributed follows immediately.
Therefore it remains to find a suitable bound for the length of the longest decreasing subsequence of a random permutation $\pi$, distributed uniformly on the symmetric group. 
This is the classical Ulam-Hammersley problem for which lot of results are available, see \cite{Romik2013}. We provide an elementary estimate below.

\newcommand{\lengthdecreasing}{r}

By Markov's inequality, the probability that $\pi$ contains a decreasing sequence of length at least $\lengthdecreasing:= \lceil 3\sqrt{\ell} \rceil \leq 3\sqrt{Dn}+1$ is at most the expected number of such subsequences which is
\begin{equation} 
\label{eq:Markov}
\binom{\ell}{\lengthdecreasing} \frac{1}{\lengthdecreasing!} < \left( \frac{e^2 \ell}{\lengthdecreasing^2} \right)^\lengthdecreasing
\leq \left( \frac{e^2}{3^2} \right)^{3 \sqrt{\ell}} 
\leq  e^{-d \sqrt{n}},
\end{equation}
for some constant $d>0$,
where we used Stirling's approximation $\lengthdecreasing!> \lengthdecreasing^\lengthdecreasing e^{-\lengthdecreasing}$ and the assumption \eqref{eq:assumption}.

This shows that the unconditional probability of the event
\begin{multline*} 
\Big( \text{the number of rows of the $\RSK$ shape of $(\Letter_1,\dots,\Letter_n)$}
\Big) \geq \\ m+3\sqrt{Dn}+1 
 \end{multline*}
is bounded from above by the sum of the right-hand sides of \eqref{eq:law-of-large-numbers-unusual-letters} and \eqref{eq:Markov}. Thus, by choosing $D>0$ in such a way that $3\sqrt{D}<\epsilon$ we finish the proof of the first part of the Lemma.

The second part of the Lemma is completely analogous. Alternatively, one can apply the symmetry argument, as follows. We define an alphabet $\alphabet'=\alphabet'_r$ which consists only of row letters, and which, as a set, is equal to $\alphabet$. The linear order on $\alphabet'$ is defined as the opposite of the linear order on $\alphabet$. Greene's theorem (\cref{theorem:greene}) shows that the number of columns of $\RSK$ shape of $(\Letter_1,\dots,\Letter_n)$, regarded as a word in $\alphabet$, is equal to the number of rows of the $\RSK$ shape of $(\Letter_1,\dots,\Letter_n)$, this time regarded as a word in $\alphabet'$. Thus the first part of the Lemma implies immediately the second part.
\end{proof}

\subsection{Plancherel measure and Vershik-Kerov-Logan-Shepp limit shape}

The \emph{Plancherel measure} on the set $\partitions{n}$ of Young diagrams with $n$ boxes is
the probability measure given by
\[ P(\lambda)=\frac{(\operatorname{dim} \lambda)^2}{n!}, \]
where $\operatorname{dim} \lambda$ is the dimension of the irreducible representation of the symmetric group $\Sym{n}$ corresponding to $\lambda$ or, in other words, the number of Young tableaux with shape $\lambda$. 
Equivalently, Plancherel measure is the distribution of $\RSK$ shape associated to a random permutation in $\Sym{n}$ with the uniform distribution.

Asymptotically, the shape of a random Plancherel-distributed Young diagram
converges to a well-known limit shape discovered in the celebrated works of Logan-Shepp \cite{loganshepp} and Vershik-Kerov \cite{vershikkerov1, vershikkerov2}.
Below we present this shape in a parametrization which is not the simplest one, but the most convenient for our purposes. The reason for this choice of parametrization will become obvious in \cref{theorem:insertion-determinism-Plancherel}.

For $-2\leq u \leq 2$ and $0\leq w\leq 1$ we define:
\begin{align*}
\Omega(u) &= \frac{2}{\pi} \left( u \sin^{-1}\left(\frac{u}{2}\right) + \sqrt{4-u^2} \right),\\ 
F(u) &= \frac12 + \frac{1}{\pi} \left( \frac{u \sqrt{4-u^2}}{4} + \sin^{-1}\left( \frac{u}{2}\right) \right),\\
U(w) &=  F^{-1}(w),  \\
V(w) &=  \Omega\big( U(w) \big), \\
X(w)&= \frac{V(w)+U(w)}{2},\\
Y(w)&= \frac{V(w)-U(w)}{2},
\end{align*}
where $F^{-1}$ denotes the compositional inverse.
See \Cref{fig:VKLGcurve} for an illustration.

\begin{figure}[tb]
\centering
\begin{tikzpicture}[scale=3]
\scriptsize
\draw[blue,thick] plot[smooth] file {programy-jeu-de-taquin/VK.txt};
\fill (0                , 2               )   circle (0.5pt) node[anchor=south west] {$w=0$};
\fill (0.106813187877376, 1.48091084014257)   circle (0.5pt) node[anchor=south west] {$0.1$};
\fill (0.223445306166904, 1.20716897169440)   circle (0.5pt) node[anchor=south west] {$0.2$};
\fill (0.349746358408714, 0.989129377989765)  circle (0.5pt) node[anchor=south west] {$0.3$};
\fill (0.486819898276584, 0.802292285876628)  circle (0.5pt) node[anchor=south west] {$0.4$};
\fill (0.636619772367539, 0.636619772367539)  circle (0.5pt) node[anchor=south west] {$0.5$};
\fill (0.802292285876628, 0.486819898276584)  circle (0.5pt) node[anchor=south west] {$0.6$};
\fill (0.989129377989765, 0.349746358408714)  circle (0.5pt) node[anchor=south west] {$0.7$};
\fill (1.20716897169440, 0.223445306166904)   circle (0.5pt) node[anchor=south west] {$0.8$};
\fill (1.48091084014257, 0.106813187877376)   circle (0.5pt) node[anchor=south west] {$0.9$};
\fill (2                , 0               )   circle (0.5pt) node[anchor=south west] {$w=1$};
\normalsize
\draw[->] (0,0) -- (2.5,0) node[anchor=west]  {$X(w)$};
\draw[->] (0,0) -- (0,2.5) node[anchor=south] {$Y(w)$};
    	\foreach \x in {1,2}
     		\draw (\x,1pt) -- (\x,-1pt)
			node[anchor=north] {\x};
    	\foreach \y in {1,2}
     		\draw (1pt,\y) -- (-1pt,\y) 
     			node[anchor=east] {\y}; 
\end{tikzpicture}
\caption{Vershik-Kerov-Logan-Shepp limit shape and its parametrization $\big(X(w),Y(w)\big)$.} 
\label{fig:VKLGcurve}
\end{figure}

\begin{fact}[Typical shape of random, Plancherel distributed Young diagrams]
\label[fact]{theorem:plancherel-asymptotic-shape}
For each $n\geq 1$ let $\Lambda^n=(\Lambda^n_1,\Lambda^n_2,\dots)$ be a random Young diagram with $n$ boxes, distributed according to Plancherel measure.
Let $(y_n)$ be a sequence of positive integers with the property that
\[ \mathbf{y} := \lim_{n\to\infty} \frac{y_n}{\sqrt{n}}  >0. \]

Then the lengths of the rows of these Young diagrams behave asymptotically as follows:
\[ \frac{ \Lambda^n_{y_n} }{\sqrt{n}} \xrightarrow[n\to\infty]{P} X\big(Y^{-1}(\mathbf{y})\big),\] 
where $Y^{-1}$ denotes the compositional inverse.
Furthermore, the rate of convergence is given as follows:  
for each $\epsilon>0$ there exists some $d>0$ with the property that
\[ P\left( \left| \frac{ \Lambda^n_{y_n} }{\sqrt{n}} - X\big(Y^{-1}(\mathbf{y})\big) \right| > \epsilon \right) = O\left( e^{-d \sqrt{n}} \right).
\]
\end{fact}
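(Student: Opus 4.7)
The plan is to identify the stated convergence as the Vershik-Kerov-Logan-Shepp (VKLS) limit shape theorem read off at a specific height, and then to upgrade convergence in probability to exponential concentration using the standard large deviation estimates available for Plancherel measure.

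First I would check the geometric content. The function $\Omega(u)$ is precisely the VKLS profile in Russian (rotated) coordinates, so $v=\Omega(u)$ traces the upper boundary of the limit shape. The change of variables $X=(V+U)/2$, $Y=(V-U)/2$ rotates back to English coordinates, so $\big(X(w),Y(w)\big)=\big((V(w)+U(w))/2,\,(V(w)-U(w))/2\big)$ parametrizes the boundary of the VKLS shape in the usual orientation, with the parameter $w=F(u)\in[0,1]$ representing the cumulative fraction of area. In particular, the horizontal extent of the limit shape at height $\mathbf{y}$ equals $X\!\big(Y^{-1}(\mathbf{y})\big)$, which is exactly the candidate limit in the statement.

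Second, the classical VKLS theorem (Vershik-Kerov, Logan-Shepp) asserts that for a Plancherel-distributed $\Lambda^n$, the rescaled profile converges uniformly in probability to $\Omega$. Reading this convergence at height $y_n$ with $y_n/\sqrt{n}\to \mathbf{y}>0$ immediately gives $\Lambda^n_{y_n}/\sqrt{n}\xrightarrow{P} X\!\big(Y^{-1}(\mathbf{y})\big)$. The only subtlety is handling the fact that $y_n/\sqrt{n}$ only tends to $\mathbf{y}$; this is absorbed by the continuity of $X\circ Y^{-1}$ together with the uniform rescaled convergence.

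Third, to obtain the exponential rate $O(e^{-d\sqrt{n}})$, I would invoke the standard exponential-in-$\sqrt{n}$ concentration bounds for the Plancherel profile. These arise naturally from the Kerov-Vershik proof via the hook-length formula: the logarithm of the Plancherel weight, evaluated on rescaled diagrams, is governed by a variational functional whose minimum is uniquely attained at $\Omega$, and the entropy cost of a sup-norm deviation of size $\epsilon$ scales like $\sqrt{n}$. Equivalently, via Greene's theorem, the partial row sums $\Lambda^n_1+\cdots+\Lambda^n_k$ coincide with the length of the longest union of $k$ disjoint increasing subsequences of a uniformly random permutation; the corresponding Deuschel-Zeitouni type large deviation bounds are known to decay as $\exp(-d\sqrt{n})$, and applying them at rescaled indices $k=\lceil \mathbf{y}\sqrt{n}\rceil$ and $k=\lceil \mathbf{y}\sqrt{n}\rceil-1$, combined with the continuity of the limit shape, yields the required bound for the individual row length $\Lambda^n_{y_n}$.

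The main obstacle is the final step: careful bookkeeping is needed to pass from exponential bounds on \emph{sums} $\Lambda^n_1+\cdots+\Lambda^n_k$ to an exponential bound on the \emph{single} row $\Lambda^n_{y_n}$, since naive subtraction could easily destroy the rate. The cleanest way around this is to avoid differencing and instead use the uniform-in-sup-norm version of the concentration result directly, which bounds all boundary values simultaneously and therefore controls $\Lambda^n_{y_n}/\sqrt{n}$ at the correct rate without any cancellation issues.
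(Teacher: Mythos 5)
Your overall route is the same as the paper's: exponential-in-$\sqrt{n}$ sup-norm concentration of the rescaled profile around the curve $v=\Omega(u)$ (which the paper, like you, obtains by adjusting the Vershik--Kerov estimates rather than by a new argument), followed by reading the shape off at the height $\mathbf{y}$; and you are right to discard the Greene's-theorem route through partial row sums, for exactly the differencing reason you give.

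The gap is in your last sentence. Sup-norm closeness of the profile to $\Omega$ in the rotated coordinates does \emph{not} by itself ``control $\Lambda^n_{y_n}/\sqrt{n}$ at the correct rate'': what you actually need is that the intersection of the $\varepsilon$-tube around $\Omega$ with the horizontal line $y=y_n/\sqrt{n}$ has small extent in the $x$-direction, and this extent is of order $\varepsilon/\bigl(1-\Omega'(u)\bigr)$ near the relevant abscissa $u$. Since $\Omega'(u)\to 1$ as $u\to 2$, the conversion factor degenerates at the edge of the limit shape, so the implication is not uniform in the height and the hypothesis $\mathbf{y}>0$ must enter the argument. This conversion is precisely where the paper does its real work: it shows that for heights bounded below by $C>0$ the intersection points must satisfy $u<2-\delta$ for some $\delta=\delta(C)>0$ (otherwise the $y$-coordinate would be forced to be small), then uses the derivative bound $\Omega'(u)<1-c$ on $(-\infty,2-\delta)$ to conclude that the intersection length tends to $0$ with $\varepsilon$, uniformly over such heights, after which continuity of $X\circ Y^{-1}$ absorbs the fact that $y_n/\sqrt{n}$ only converges to $\mathbf{y}$. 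Your proposal asserts this step rather than proving it; adding this Lipschitz-plus-edge-exclusion argument (or an equivalent) is what is needed to make the final step rigorous.
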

\begin{proof}
Essentially, this result is a rather straightforward reformulation of the results of 
Vershik and Kerov. We provide the details below.

We consider the rotated (so called, Russian) coordinate system 
\[ u = x-y, \quad v = x+y \]
on the plane. \Cref{fig:french} shows how a Young diagram in the Russian coordinate system can be identified with its \emph{profile} which is just a function on the real line $\R$.

\begin{figure}[tbp]

\centering
\subfloat[]{
\begin{tikzpicture}[scale=0.8]

\begin{scope}[scale=1/sqrt(2),rotate=-45,draw=gray]

      \begin{scope}[draw=black,rotate=45,scale=sqrt(2)]
          \fill[fill=blue!10] (4,0) -- (4,1) -- (3,1) -- (3,2) -- (1,2) -- (1,3) -- (0,3) -- (0,0) -- cycle ;
      \end{scope}  

      \begin{scope}
          \clip[rotate=45] (-2,-2) rectangle (7.5,6.5);
          \draw[thin, dotted, draw=gray] (-10,0) grid (10,10);
          \begin{scope}[rotate=45,draw=black,scale=sqrt(2)]
              \draw[thin, dotted] (0,0) grid (15,15);
          \end{scope}
      \end{scope}

      \draw[->,thick] (-4.5,0) -- (4.5,0) node[anchor=west,rotate=-45]{\textcolor{gray}{$u$}};
      \foreach \z in { -3, -2, -1, 1, 2, 3}
            { \draw (\z, -2pt) node[anchor=north,rotate=-45] {\textcolor{gray}{\tiny{$\z$}}} -- (\z, 2pt); }

      \draw[->,thick] (0,-0.4) -- (0,9.5) node[anchor=south,rotate=-45]{\textcolor{gray}{$v$}};

      \foreach \t in {1, 2, 3, 4, 5, 6, 7, 8, 9}
            { \draw (-2pt,\t) node[anchor=east,rotate=-45] {\textcolor{gray}{\tiny{$\t$}}} -- (2pt,\t); }

      \begin{scope}[draw=black,rotate=45,scale=sqrt(2)]

          \draw[->,thick] (0,0) -- (6,0) node[anchor=west]{{{$x$}}};
          \foreach \x in {1, 2, 3, 4, 5}
              { \draw (\x, -2pt) node[anchor=north] {{\tiny{$\x$}}} -- (\x, 2pt); }

          \draw[->,thick] (0,0) -- (0,5) node[anchor=south] {{{$y$}}};
          \foreach \y in {1, 2, 3, 4}
              { \draw (-2pt,\y) node[anchor=east] {{\tiny{$\y$}}} -- (2pt,\y); }

          \draw[ultra thick,draw=blue] (5.5,0) -- (4,0) -- (4,1) -- (3,1) -- (3,2) -- (1,2) -- (1,3) -- (0,3) -- (0,4.5) ;

      \end{scope}
 
\end{scope}

\end{tikzpicture}
\label{subfigure:french}
}

\vspace{5ex}

\subfloat[]{
\begin{tikzpicture}[scale=0.8]

\begin{scope}[scale=1]
 
        \begin{scope}[draw=gray,rotate=45,scale=sqrt(2)]
          \fill[fill=blue!10] (4,0) -- (4,1) -- (3,1) -- (3,2) -- (1,2) -- (1,3) -- (0,3) -- (0,0) -- cycle ;
        \end{scope}

       \begin{scope}
          \clip (-4.5,0) rectangle (5.5,5.5);
          \draw[thin, dotted] (-6,0) grid (6,6);
          \begin{scope}[rotate=45,draw=gray,scale=sqrt(2)]
              \clip (0,0) rectangle (4.5,5.5);
              \draw[thin, dotted] (0,0) grid (6,6);
          \end{scope}
      \end{scope}

      \draw[->,thick] (-6,0) -- (6,0) node[anchor=west]{$u$};
      \foreach \z in {-4, -3, -2, -1, 1, 2, 3, 4, 5}
            { \draw (\z, -2pt) node[anchor=north] {\tiny{$\z$}} -- (\z, 2pt); }

      \draw[->,thick] (0,-0.4) -- (0,6) node[anchor=south]{$v$};
      \foreach \t in {1, 2, 3, 4, 5}
            { \draw (-2pt,\t) node[anchor=east] {\tiny{$\t$}} -- (2pt,\t); }

  \begin{scope}[draw=gray,rotate=45,scale=sqrt(2)]

          \draw[->,thick] (0,0) -- (6,0) node[anchor=west,rotate=45] {\textcolor{gray}{{$x$}}};
          \foreach \x in {1, 2, 3, 4, 5}
              { \draw (\x, -2pt) node[anchor=north,rotate=45] {\textcolor{gray}{\tiny{$\x$}}} -- (\x, 2pt); }

          \draw[->,thick] (0,0) -- (0,5) node[anchor=south,rotate=45] {\textcolor{gray}{{$y$}}};
          \foreach \y in {1, 2, 3, 4}
              { \draw (-2pt,\y) node[anchor=east,rotate=45] {\textcolor{gray}{\tiny{$\y$}}} -- (2pt,\y); }

          \draw[ultra thick,draw=blue] (5.5,0) -- (4,0) -- (4,1) -- (3,1) -- (3,2) -- (1,2) -- (1,3) -- (0,3) -- (0,4.5) ;

  \end{scope}

\end{scope}

\end{tikzpicture}
\label{subfigure:russian}
}

\caption{A Young diagram $\lambda=(4,3,1)$ shown in \protect\subref{subfigure:french} the French and \protect\subref{subfigure:russian} the Russian convention. The solid line represents the \emph{profile} of the Young diagram. The coordinates system $(u,v)$ corresponding to the Russian convention and the coordinate system $(x,y)$ corresponding to the French convention are shown.}

\label{fig:french}
\end{figure}

A slight variation of the results of Vershik and Kerov \cite{vershikkerov2} (it follows from the numerical estimates in Section~3 of that paper by modifying some parameters in an obvious way; see also \cite[Chapter~1]{Romik2013}) states that for each $\varepsilon>0$ there exists some $d=d(\varepsilon)>0$ with the property that the rescaled (by factor $\frac{1}{\sqrt{n}}$) profile of a Plancherel-random Young diagram with $n$ boxes is (with probability at least $1-O\left( e^{-d\sqrt{n}}\right)$) contained in an $\varepsilon$-neighborhood of the graph of the function $v=\Omega(u)$, see
\Cref{figure:epsilon-neighborhood}. 

The diagonal solid line on \Cref{figure:epsilon-neighborhood} shows the intersection of this neighborhood with the line $y=\frac{y_n}{\sqrt{n}}$;
we are interested in the $x$-coordinates of the points from this intersection since they correspond to (scaled by a factor $\frac{1}{\sqrt{n}}$) possible values of $\Lambda^n_{y_n}$. In the following we will show that as $\varepsilon\to 0$, the length of this intersection converges to zero uniformly over $\frac{y_n}{\sqrt{n}}> C$ for arbitrary $C>0$. 
This would imply that for each $\epsilon>0$ it is possible to choose $\varepsilon>0$ small enough that
\[ P\left( \left| \frac{ \Lambda^n_{y_n} }{\sqrt{n}} - X\left(Y^{-1}\left( \frac{y_n}{\sqrt{n}} \right)\right) \right| > \epsilon \right) = O\left( e^{-d(\varepsilon) \sqrt{n}} \right)
\]
as the common point of the curve $\Omega$ and the line 
$y=\frac{y_n}{\sqrt{n}}$ belongs to the above intersection as well.
The continuity of the function $X\big( Y^{-1}( \cdot ) \big)$ would finish the proof.

\begin{figure}[tb]
\centering
\begin{tikzpicture}[scale=3]
\clip (-2.2,-0.1) rectangle (2.2,2);
\begin{scope}[rotate=45]
\begin{scope}
\clip (-0.5,-0.5) rectangle (2.3,2.3);

\draw[blue!50,fill=blue!10] 
plot[smooth] file {programy-jeu-de-taquin/VK-A.txt}
plot[smooth] file {programy-jeu-de-taquin/VK-B.txt};

\draw[blue,thick] plot[smooth] file {programy-jeu-de-taquin/VK-C.txt};

\draw[blue,dashed,thick,<->] 
(0.106813187877376, 1.48091084014257) +(0.2,0.2) -- +(-0.2,-0.2);
\draw[blue,dashed,thick,<->] 
(0.486819898276584, 0.802292285876628) +(0.2,0.2) -- +(-0.2,-0.2);

\draw[red,dashed] (0,0.38)   -- (4,0.38);
\draw (0,0.38) +(1pt,0) -- +(-1pt,0) node[anchor=north east] {$\frac{y_n}{\sqrt{n}}$};
 
\begin{scope}
\clip 
plot[smooth] file {programy-jeu-de-taquin/VK-A.txt}
plot[smooth] file {programy-jeu-de-taquin/VK-B.txt};
\draw[very thick,red] (0,0.38) -- (2,0.38);
\end{scope}

\end{scope}

\draw[->] (0,0) -- (2.5,0) node[anchor=south west]  {$X(w)$};
\draw[->] (0,0) -- (0,2.5) node[anchor=south east] {$Y(w)$};
    	\foreach \x in {1,2}
     		\draw (\x,1pt) -- (\x,-1pt)
			node[anchor=north west] {\x};
    	\foreach \y in {1,2}
     		\draw (1pt,\y) -- (-1pt,\y) 
     			node[anchor=north east] {\y}; 
\end{scope}
\end{tikzpicture}
\caption{Logan-Shepp-Vershik-Kerov curve and its $\varepsilon$-neighborhood.} 
\label{figure:epsilon-neighborhood}
\end{figure}

It remains to show that as $\varepsilon\to 0$, the length of the intersection converges to zero uniformly over $\frac{y_n}{\sqrt{n}}> C$ for arbitrary $C>0$. 
Let $(u_1,v_1)$, $(u_2,v_2)$ be the coordinates (in the Russian coordinate system) of some points on this intersection. This implies that their $y$-coordinates are equal:
\[ 2y=v_1-u_1= v_2-u_2.\]
On the other hand,
\[ \left| \Omega(u_i) - v_i \right| < \varepsilon \]
for each $i\in\{1,2\}$. Thus
\begin{equation}
\label{eq:omega-lipschitz-1}
  \Omega(u_2)- \Omega(u_1)  > u_2-u_1- 2\varepsilon. 
\end{equation}

Suppose that $u_i>2-\delta$ for some $\delta>0$. Note that $y$-coordinate of $(u_i,v_i)$ fulfills
\[ C< y \leq \frac{\Omega(u_i)-u_i+\varepsilon}{2};\]
as $\varepsilon\to 0$ and $\delta\to 0$, the right-hand side converges to zero, which leads to a contradiction. This shows that there exists $\delta>0$ such that $u_i<2-\delta$ for all $\varepsilon>0$ which are sufficiently small.

A direct calculation of the derivative shows that there exists $c>0$ such that 
$\Omega'(u) < 1-c$ for any $u\in (-\infty,2-\delta)$. Thus,
for any $u_1\leq u_2$ such that $u_1,u_2\in(-\infty, 2-\delta)$
\begin{equation}
\label{eq:omega-lipschitz-2}
\Omega(u_2)-\Omega(u_1) \leq (u_2 - u_1)(1-c).
\end{equation}
\Cref{eq:omega-lipschitz-1,eq:omega-lipschitz-2} show that if $\varepsilon\to 0$ then
$u_2-u_1 \to 0$ as well. This implies that the difference of the $x$-coordinates
$\frac{\Omega(u_i)+u_i}{2}$ converges to zero as well.
This concludes the proof that the length of the intersection converges to zero.
\end{proof}

\subsection{Asymptotic determinism of Schensted insertion for Plancherel measure}

In order to show \cref{theorem:insertion-determinism} we will need the following special case of it for $\alpha=\beta=(0,0,\dots)$ and $\gamma=1$ which has been proved in our previous work.
It explains our parametrization of Vershik-Kerov-Logan-Shepp curve:
$\big( X(w), Y(w) \big)$ is just the (rescaled) typical position of the newly created box by Schensted insertion, when $w\in(0,1)$ is inserted.

\begin{fact}[{\cite[Theorem 5.1]{RomikSniady2011}}]
\label[fact]{theorem:insertion-determinism-Plancherel}
Let $\Letter_1,\Letter_2,\dots$ be the sequence of random, i.i.d.~letters from the interval $(0,1)$, taken with the uniform distribution.
Let $\letter\in (0,1)$ be deterministic. 
Let $\Box_n$ denote the location of the last box added to the recording tableau by $\RSK$ algorithm applied to the sequence
\[ (\Letter_1,\dots,\Letter_{n-1},\letter).\]

Then 
\[ \frac{\Box_n}{\sqrt{n}} \xrightarrow[n\to\infty]{P} \big( X(w), Y(w) \big). 
\]
The rate of convergence is given, for each $\epsilon>0$, by
\[  P\left\{ \left\| \frac{\Box_n}{\sqrt{n}} - \big( X(w), Y(w) \big) \right\| > \epsilon \right\} = O\left( n^{-\frac{1}{4}} \right).\]
\end{fact}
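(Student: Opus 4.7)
The plan is to relate the position of the new box $\Box_n$ to the shape of the sub-tableau of $T := P(\Letter_1, \ldots, \Letter_{n-1})$ consisting of entries at most $w$, and then to invoke \cref{theorem:plancherel-asymptotic-shape} twice: once for the full shape $\lambda$ of $T$, and once for this sub-shape. The parametrization is cooked up so that the region under the rescaled VKLS curve to the left of the point $(X(w),Y(w))$ has Euclidean area exactly $w$, which is consistent with the expected density $w$ of entries $\leq w$.

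By a standard property of $\RSK$ with distinct letters, the boxes of $T$ with entries $\leq w$ form a Young sub-diagram whose shape $\lambda^{(w)}$ coincides with the $\RSK$-shape of the sub-sequence $(\Letter_{i_1}, \ldots, \Letter_{i_N})$ consisting of those letters at most $w$, where $N = \#\{i<n : \Letter_i \leq w\}$. Since the renormalized letters $\Letter_{i_j}/w$ are i.i.d.~uniform on $(0,1)$ and $N/n \to w$ exponentially fast by Hoeffding's inequality, \cref{theorem:plancherel-asymptotic-shape} applied to $\lambda^{(w)}$ shows that its profile concentrates exponentially around the VKLS curve rescaled by $\sqrt{wn}$, while the profile of $\lambda$ itself concentrates around the curve rescaled by $\sqrt{n}$.

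Next I would analyze the bumping path produced by inserting $w$ into $T$. Because letters are a.s.~distinct, the bumping path enters row~$1$ at column $\lambda^{(w)}_1+1$ and then cascades upward with strictly decreasing columns, each intermediate position lying between the boundary of $\lambda^{(w)}$ (on the left) and the boundary of $\lambda$ (on the right). The termination row $y_0$ is therefore trapped between these two boundary curves, and in the rescaled limit the two curves meet precisely at $(X(w),Y(w))$: indeed, by the area identity for $F$ one checks that the level set of height $w$ in the "filling order" of the limiting Plancherel tableau is exactly the point $(X(w),Y(w))$ of the VKLS curve. This forces $\Box_n/\sqrt{n} \to (X(w),Y(w))$ in probability.

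The main obstacle is obtaining the explicit rate $O(n^{-1/4})$. Although \cref{theorem:plancherel-asymptotic-shape} gives exponential concentration $e^{-d\sqrt{n}}$ for fluctuations of the profiles of $\lambda$ and $\lambda^{(w)}$ on a scale $\epsilon\sqrt{n}$, translating such fluctuations into a bound on the meeting point of the two profiles costs a square root: the VKLS profile of $\lambda^{(w)}$ is tangent in the limit to (a dilation of) the VKLS profile of $\lambda$, so a transverse fluctuation of order $\epsilon\sqrt{n}$ in either profile can shift the meeting point by order $\sqrt{\epsilon}\sqrt{n}$. Optimizing $\epsilon$ against the tail bound $e^{-d\sqrt{n}}$ produces an $O(n^{-1/4})$ rate rather than an exponential one. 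The delicate part of the argument is to carry out this tangency analysis rigorously and to control the residual cascade of bumped values through the intermediate rows, ensuring that the bumping path does not deviate beyond the sandwich supplied by the two concentrated boundary curves.
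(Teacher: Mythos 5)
First, a point of comparison: the paper does not prove this statement at all --- it is stated as a \emph{Fact} and imported verbatim from \cite[Theorem 5.1]{RomikSniady2011}, so there is no internal proof to measure yours against. Judged on its own merits, your argument contains a genuine gap at the decisive step, the localization of the terminal box.

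Your sandwich is correct as far as it goes: the bumping position in row $j$ lies between column $\lambda^{(w)}_j+1$ and column $\lambda_j+1$. But this sandwich does not pinch. After rescaling by $\sqrt{n}$, the boundary of $\lambda^{(w)}$ converges to $\sqrt{w}$ times the Vershik--Kerov--Logan--Shepp curve (it is the shape of the insertion tableau of roughly $wn$ i.i.d.\ uniform letters), while the boundary of $\lambda$ converges to the curve itself. For $w<1$ these are two \emph{strictly nested} dilations of the same curve about the origin: they meet nowhere in the open quadrant and in particular are not tangent at $\big(X(w),Y(w)\big)$. Since the new box is automatically an outer corner of $\lambda$, the only content of your sandwich is that $\Box_n$ lies on the outer boundary of $\lambda$ and weakly to the right of the boundary of $\lambda^{(w)}$ --- which asymptotically is true of \emph{every} point of the outer boundary. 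So the argument provides no localization along the curve, and the subsequent tangency/rate discussion is built on the same false premise. Relatedly, the ``area identity'' you invoke is false: the area under the limit curve to the left of $u=U(w)$ is not $w$ (its derivative in $w$ at $w=\tfrac12$ equals $\tfrac12\,\Omega(0)\,U'(\tfrac12)=\tfrac12\cdot\tfrac{4}{\pi}\cdot\pi=2$, not $1$). The actual content of the parametrization is that $F$ is the cumulative distribution function of the semicircle law, i.e.\ of Kerov's transition measure of the limit shape, so that $U(w)$ is the $w$-quantile of the semicircle distribution. That is the identity a correct proof must exploit: the position of the newly created box is monotone in the inserted letter, and for a \emph{uniformly random} inserted letter the rescaled $u$-coordinate of the new box is asymptotically semicircle-distributed (this is where \cref{theorem:plancherel-asymptotic-shape} genuinely enters); monotonicity then forces the deterministic letter $w$ to produce the $w$-quantile $u=U(w)$, and membership in the outer boundary supplies the second coordinate $v=\Omega\big(U(w)\big)$. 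The $O\!\left(n^{-1/4}\right)$ rate comes from quantifying this quantile argument, not from a tangency analysis of two profiles that in fact never touch.
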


\section{Asymptotic determinism of Schensted insertion}
\label{sec:asymptotic-determinism-schensted}

\subsection{The insertion alphabet}
\label{subsec:insertionalphabet}

The second most important example of an alphabet is $\alphabetinsertion=\alphabetinsertion_r\sqcup \alphabetinsertion_0 \sqcup \alphabetinsertion_c$ with $\alphabetinsertion_{r}=\{\dots,-3,-2,-1\}$, $\alphabetinsertion_0=(0,1)\subset\R$ and $\alphabetinsertion_{c}=\{1,2,3,\dots\}$
with the linear order defined as follows: on each of the sets $\alphabetinsertion_r$, $\alphabetinsertion_0$, $\alphabetinsertion_{c}$ we consider the natural order; we declare any element of $\alphabetinsertion_{c}$ smaller than any element of $\alphabetinsertion_0$, which is smaller than any element of $\alphabetinsertion_{r}$.  This linear order can be visualized as follows:
\[ \underbrace{1 < 2 < 3 < \cdots}_{\text{column letters}}< \underbrace{\cdots < 0.1 < \cdots< 0.9 < \cdots} <\underbrace{\cdots < -3 < -2 < -1}_{\text{row letters}}, \]
compare with \eqref{eq:jdt-alphabet}. 
This alphabet will be called \emph{the insertion alphabet}.

If $(\alpha,\beta,\gamma)$ belongs to Thoma simplex, we define the following probability measure $\alphabetmeasureinsertion_{\alpha,\beta,\gamma}$ on $\alphabetinsertion$:
\begin{itemize}
\item for $-i\in\{-1,-2,-3,\dots\}=\alphabetinsertion_{r}$ we set $\alphabetmeasureinsertion_{\alpha,\beta,\gamma}(-i)=\alpha_i$; 
\item for $i\in\{1,2,3,\dots\}= \alphabetinsertion_{c}$ we set $\alphabetmeasureinsertion_{\alpha,\beta,\gamma}(i)=\beta_i$; 
\item on $\alphabetinsertion_0=(0,1)$ we take as $\alphabetmeasureinsertion_{\alpha,\beta,\gamma}$ the absolutely continuous measure on the unit interval $(0,1)$ with constant density $\gamma$.
\end{itemize}

This alphabet and the measure are the ones used in \cref{theorem:insertion-determinism}.

\subsection{The opposite alphabets}
The alphabets $\alphabetjeu$ and $\alphabetinsertion$, regarded as ordered sets, are equal. However, since their decompositions $\alphabet=\alphabet_r \sqcup \alphabet_0 \sqcup \alphabet_c$ into row letters and column letters are different, this equality turns out to be not very important.

It is much more convenient to consider the bijection $\isomorphismItself:\alphabetjeu \rightarrow \alphabetinsertion$ defined by
\begin{align*} 
\alphabetjeu_r = \{1,2,\dots\} \ni k & \mapstochar\xrightarrow{\isomorphismItself} -k \in\{-1,-2,-3,\dots\} = \alphabetinsertion_r, \\
\alphabetjeu_c = \{-1,-2,\dots\} \ni -k & \mapstochar\xrightarrow{\isomorphismItself} k \in\{1,2,3,\dots\} = \alphabetinsertion_c, \\
\alphabetjeu_0 = (0,1) \ni x & \mapstochar\xrightarrow{\isomorphismItself} 1-x \in (0,1) = \alphabetinsertion_0.
\end{align*}
The map $\isomorphismItself$ is an anti-isomorphism of ordered sets which preserves the decomposition of the alphabets $\alphabet=\alphabet_r \sqcup \alphabet_0 \sqcup \alphabet_c$ into row letters and column letters.
Furthermore, the pushforward of $\alphabetmeasurejeu_{\alpha,\beta,\gamma}$ is equal to $\alphabetmeasureinsertion_{\alpha,\beta,\gamma}$.

\subsection{Duality between \jdt and Schensted insertion}

The following lemma shows that Schensted insertion and \jdt are closely related to each other.
\begin{lemma}
\label[lemma]{lem:duality}
Let $\letter_1,\dots,\letter_n\in \alphabetjeu$. 
We assume that each neutral letter appears at most once in this tuple.
Let $Q:=Q(\letter_1,\dots,\letter_n)$ be the corresponding recording tableau and let $\jpathlazy_n$ be the box where the finite version of \jdt leaves tableau $Q$.

We consider the tuple $\isomorphism{\letter_n},\dots,\isomorphism{\letter_1}\in\alphabetinsertion$ and the corresponding recording tableau
$Q':=Q\big(\isomorphism{\letter_n},\dots,\isomorphism{\letter_1}\big)$. Let $\Box_n$ be the box with the label $n$ in $Q'$ (i.e., it is the box added in the last Schensted insertion step).

Then $\jpathlazy_n=\Box_n$.
\end{lemma}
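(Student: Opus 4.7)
My plan is to recognize both $\jpathlazy_n$ and $\Box_n$ as the unique ``extra'' corner cell that distinguishes the shapes of two closely related recording tableaux, and then to use \cref{theorem:greene} to show that the shapes on the two sides agree.

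First, applying the duality \cref{lem:factor-map} to the tuple $(\letter_1,\dots,\letter_n)$ gives
\[ Q(\letter_2,\dots,\letter_n) = j\bigl(Q(\letter_1,\dots,\letter_n)\bigr), \]
so by the very definition of the finite \jdt map $j$, the endpoint $\jpathlazy_n$ of the slide is precisely the outer corner of the shape of $Q(\letter_1,\dots,\letter_n)$ which does not appear in the shape of $Q(\letter_2,\dots,\letter_n)$. Symmetrically, on the insertion side, $\Box_n$ is by construction the unique cell present in the shape of $Q(\isomorphism{\letter_n},\dots,\isomorphism{\letter_1})$ but absent from the shape of $Q(\isomorphism{\letter_n},\dots,\isomorphism{\letter_2})$. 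It therefore suffices to prove that the shapes of $Q(\letter_1,\dots,\letter_n)$ and $Q(\isomorphism{\letter_n},\dots,\isomorphism{\letter_1})$ coincide, and likewise for the shorter pair $Q(\letter_2,\dots,\letter_n)$ and $Q(\isomorphism{\letter_n},\dots,\isomorphism{\letter_2})$.

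The shape equalities in turn follow from \cref{theorem:greene} combined with a simple structural property of $\isomorphismItself$, which is an anti-isomorphism of linearly ordered sets preserving the decomposition into row and column letters. A short case analysis (separating the cases $a\neq b$ and $a=b$, and using preservation of row/column type for the tie-breaking) shows that
\[ \isomorphism{a} <_r \isomorphism{b} \iff b <_r a, \qquad \isomorphism{a} <_c \isomorphism{b} \iff b <_c a \]
for all $a,b\in\alphabetjeu$. Consequently, reversing the order of the sequence and applying $\isomorphismItself$ coordinate-wise defines a bijection between $<_r$-increasing subsequences of $(\letter_1,\dots,\letter_n)$ and $<_r$-increasing subsequences of $(\isomorphism{\letter_n},\dots,\isomorphism{\letter_1})$, and analogously between $<_c$-decreasing subsequences; since this bijection is induced by a permutation of positions it preserves both disjointness and total length. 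By \cref{theorem:greene}, the row sums $\lambda_1+\cdots+\lambda_k$ and column sums $\lambda_1'+\cdots+\lambda_k'$ of the two shapes must coincide for every $k$, forcing the shapes themselves to coincide. The same argument applied to $(\letter_2,\dots,\letter_n)$ yields the other equality.

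Combining the two shape equalities, the unique extra box on the \jdt side ($\jpathlazy_n$) and the unique extra box on the insertion side ($\Box_n$) must occupy the same cell, giving $\jpathlazy_n = \Box_n$. I do not foresee any real obstacle: the whole argument is a compatibility check between $\isomorphismItself$ and the orders $<_r$, $<_c$, and the only delicate point is the tie-breaking for repeated letters (which by hypothesis can occur only for row or column letters, not for neutral letters), handled automatically by the fact that $\isomorphismItself$ maps $\alphabetjeu_r$ to $\alphabetinsertion_r$ and $\alphabetjeu_c$ to $\alphabetinsertion_c$.
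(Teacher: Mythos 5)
Your proposal is correct and follows essentially the same route as the paper: identify $\jpathlazy_n$ and $\Box_n$ as the single box distinguishing $\lambda_n$ from $\lambda_{n-1}$ (via \cref{lem:factor-map} on the jeu de taquin side and the definition of the recording tableau on the insertion side), and then use Greene's theorem to show that reversal combined with the anti-isomorphism $\isomorphismItself$ preserves the $\RSK$ shape. The only difference is that you spell out the compatibility $\isomorphism{a} <_r \isomorphism{b} \iff b <_r a$ and the induced bijection of subsequences, which the paper leaves implicit in its appeal to Greene's theorem.
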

\begin{proof}
Let $\lambda_n$ be the $\RSK$ shape associated to $(\letter_1,\letter_2,\dots,\letter_n)$.
By Greene's theorem (\cref{theorem:greene}) it follows that it is also the $\RSK$ shape associated to $\big(\isomorphism{\letter_n},\dots,\isomorphism{\letter_2},\isomorphism{\letter_1}\big)$.

Let $\lambda_{n-1}$ be the $\RSK$ shape associated to the postfix $(\letter_2,\dots,\letter_n)$.
By the same argument it follows that it is also the $\RSK$ shape associated to $\big(\isomorphism{\letter_n},\dots,\isomorphism{\letter_2}\big)$.

By definition, $\lambda_n$ is the shape of $Q$; 
\cref{lem:factor-map} shows that $\lambda_{n-1}$ is the shape of $j(Q)$ thus
\begin{align*} 
\{ \jpathlazy_n \} = & \lambda_n \setminus \lambda_{n-1}. \\
\intertext{On the other hand,} 
 \{ \Box_n \} = & \lambda_n \setminus \lambda_{n-1} 
\end{align*}
which finishes the proof.
\end{proof}

\begin{remark}
\cref{lem:duality} holds true in bigger generality with the alphabets $\alphabetjeu$ and $\alphabetinsertion$ replaced by arbitrary alphabets $\alphabet$, $\mathbb{B}$ with the property that there exists anti-isomorphism of ordered sets $\isomorphismItself:\alphabet\to\mathbb{B}$ which preserves the decompositions $\alphabet=\alphabet_r\sqcup \alphabet_0 \sqcup \alphabet_c$.
\end{remark}

\begin{lemma}
\label[lemma]{proba-is-decreasing}
Let $(\alpha,\beta,\gamma)$ be an element of Thoma simplex.
Let $\Letter_1,\Letter_2,\dots$ be the sequence of random, i.i.d.~letters from the insertion alphabet $\alphabetinsertion$, with probability distribution $\alphabetmeasureinsertion_{\alpha,\beta,\gamma}$. Let $\letter\in\alphabetinsertion$ be a deterministic letter. 
Let $\Box_n$ denote the location of the last box added to the recording tableau by $\RSK$ algorithm applied to the sequence
\[ (\Letter_1,\dots,\Letter_{n-1},\letter).\]

Then for arbitrary $k\in\{1,2,\dots\}$
\begin{equation}
\label{eq:proba-first-rows}
 \Big( P\big( \text{$\Box_n$ is in one of the first $k$ rows}\big) \Big)_{n}  
\end{equation}
is a weakly decreasing sequence. 
\end{lemma}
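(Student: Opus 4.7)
\emph{Plan.} The strategy is to combine the \jdt--Schensted duality of \cref{lem:duality} with a coupling argument. First, I will reformulate $\Box_n$ (in distribution) as the \jdt exit point of a growing random recording tableau in the alphabet $\alphabetjeu$; then, on the enlarged probability space that carries the whole nested family of such tableaux, I will show pointwise that the row coordinate of this exit point is weakly increasing in $n$.

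Set $W := \isomorphism^{-1}(\letter) \in \alphabetjeu$ (deterministic) and let $\tilde{\Letter}_1, \tilde{\Letter}_2, \dots$ be i.i.d.\ with distribution $\alphabetmeasurejeu_{\alpha,\beta,\gamma}$. Define
\[ Q_n := Q(W, \tilde{\Letter}_1, \dots, \tilde{\Letter}_{n-1}), \qquad n \geq 1. \]
Combining \cref{lem:duality} with the exchangeability of $(\tilde{\Letter}_1, \dots, \tilde{\Letter}_{n-1})$ (used to reverse their order), one verifies that $\jpathlazy_n(Q_n)$ has the same distribution as the original $\Box_n$. In the coupling furnished by a common realization of $(\tilde{\Letter}_i)$, the tableaux form an increasing family $Q_n \subset Q_{n+1}$, and $Q_{n+1}$ is obtained from $Q_n$ by one further insertion, adding a single extra box $N_{n+1}$ carrying the maximal label $n+1$. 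It thus suffices to prove that $y\bigl(\jpathlazy_{n+1}(Q_{n+1})\bigr) \geq y\bigl(\jpathlazy_n(Q_n)\bigr)$ pointwise.

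Both finite \jdt paths start at the box labelled $1$, located at $(1,1)$ in both tableaux, and at every interior step they slide the empty cell into whichever of its up- or right-neighbours carries the smaller entry (if one neighbour is outside the current tableau, the empty cell simply moves into the other). Because $Q_n$ and $Q_{n+1}$ agree at every common box, and because $N_{n+1}$ carries the largest label $n+1$ in $Q_{n+1}$, every comparison in $Q_{n+1}$ that involves $N_{n+1}$ is won by its (smaller) neighbour. A short case analysis over the ``inside'' versus ``outside'' status of the two candidate neighbours then shows that the two \jdt paths coincide step by step until the $Q_n$-path reaches its exit corner $(a_n, b_n) := \jpathlazy_n(Q_n)$. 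At that corner both neighbours of $(a_n,b_n)$ are outside $Q_n$; in $Q_{n+1}$ they remain outside unless one of them is $N_{n+1}$, in which case the $Q_{n+1}$-path takes exactly one extra step into $N_{n+1}$ and exits there, since the up- and right-neighbours of the newly created corner $N_{n+1}$ are automatically outside $Q_{n+1}$. Consequently $\jpathlazy_{n+1}(Q_{n+1})$ equals $(a_n,b_n)$, or lies immediately above it, or immediately to its right; in every case its $y$-coordinate is at least $b_n$. Taking probabilities of the indicator $\mathbf{1}_{\{y \leq k\}}$ finishes the proof.

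The main obstacle is the case analysis in the middle paragraph: one must carefully check that neither the reappearance of $N_{n+1}$ as a previously-absent neighbour nor its large entry can redirect the empty cell at any interior step, using only the local definition of the \jdt slide together with the structural fact that $N_{n+1}$ is an outer corner of $Q_n$. No deeper machinery beyond \cref{lem:duality} is required.
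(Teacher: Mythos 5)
Your proposal is correct and follows essentially the same route as the paper: apply \cref{lem:duality} to pass to the \jdt exit point of the recording tableau in $\alphabetjeu$, use exchangeability of the i.i.d.\ letters to reverse their order so that the tableaux $Q_n$ nest, and then observe that the row coordinate of the exit point is pointwise weakly increasing in $n$. The only difference is cosmetic: the paper gets the monotonicity for free from the lazy parametrization $\jpathlazy_n(Q)$ of the single infinite \jdt path of $Q=\RSK\big(\isomorphismItself^{-1}(\letter),\isomorphismItself^{-1}(\Letter_1),\dots\big)$, whereas you re-derive the same consistency by a step-by-step comparison of the finite \jdt paths of $Q_n$ and $Q_{n+1}$.
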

\begin{proof}
We apply \cref{lem:duality}; it implies that the sequence \eqref{eq:proba-first-rows} coincides with
\begin{equation}
\label{eq:proba-first-rows-2}
  \Big( P\big( \text{$\jpathlazy_n(Q_n)$ is in one of the first $k$ rows}\big) \Big)_{n},  
\end{equation}
where $Q_n$ is defined as the recording tableau associated to the sequence $\big(\isomorphism{w},\isomorphism{W_{n-1}},\dots,\isomorphism{W_2},\isomorphism{W_1}\big)$. It does not change the sequence \eqref{eq:proba-first-rows-2} if we change the definition of $Q_n$ to be the recording tableau associated to the sequence $\big(\isomorphism{w},\isomorphism{W_1},\isomorphism{W_2},\dots,\isomorphism{W_{n-1}}\big)$. In particular, the sequence \eqref{eq:proba-first-rows-2} coincides with the sequence
\begin{equation}
\label{eq:proba-first-rows-3}
  \Big( P\big( \text{$\jpathlazy_n(Q)$ is in one of the first $k$ rows}\big) \Big)_{n},  
\end{equation}
where $Q:=\RSK\big(\isomorphism{w},\isomorphism{W_1},\isomorphism{W_2},\dots\big)\in\tableaux$.
Clearly, for any tableau $Q$, the sequence $y\big(\jpathlazy_n(Q)\big)$ 
of $y$-coordinates is weakly increasing which immediately implies that \eqref{eq:proba-first-rows-3} and thus \eqref{eq:proba-first-rows} are weakly decreasing.
\end{proof}

\subsection{Asymptotic determinism of Schensted insertion}

The proofs of our results will be based on the following technical result, which might be interesting on its own.


\begin{theorem}[Asymptotic determinism of Schensted insertion]
\label[theorem]{theorem:insertion-determinism}
Let $(\alpha,\beta,\gamma)$ be an element of Thoma simplex.
Let $\Letter_1,\Letter_2,\dots$ be the sequence of random, i.i.d.~letters from the insertion alphabet $\alphabetinsertion$, with probability distribution $\alphabetmeasureinsertion_{\alpha,\beta,\gamma}$. Let $\letter\in\alphabetinsertion$ be a deterministic letter. 
Let $\Box_n$ denote the location of the last box added to the recording tableau by $\RSK$ algorithm applied to the sequence
\[ (\Letter_1,\dots,\Letter_{n-1},\letter).\]

\begin{enumerate}[label=\emph{(\Alph*)}]
\item \label[empty]{item:insertion-Row}
In the case when $\letter=-k\in \{-1,-2,-3,\dots\}=\alphabetinsertion_r$ we assume that $\alpha_k>0$.
Then
\[ \lim_{n\to\infty} P\big( \text{$\Box_n$ is in row $k$} \big) =1. \]

 \item \label[empty]{item:insertion-Column}
In the case when $\letter=k\in \{1,2,3,\dots\}=\alphabetinsertion_c$ we assume that $\beta_k>0$.
Then
\[ \lim_{n\to\infty} P\big( \text{$\Box_n$ is in column $k$} \big) =1. \]

\item \label[empty]{item:insertion-Diagonal}
In the case when $\letter\in (0,1)= \alphabetinsertion_0$ we assume that $\gamma>0$. Then
$$  \frac{\Box_n}{\sqrt{\gamma n}} \xrightarrow[n\to\infty]{P} \big( X(w), Y(w) \big), $$
where $\big( X(w), Y(w) \big)$ is the parametrization of Vershik-Kerov-Logan-Shepp curve considered in \Cref{fig:VKLGcurve}.
The rate of convergence is given, for any $\epsilon>0$, by
\begin{equation}
\label{eq:rate-of-convergence}
  P\left\{ \left\| \frac{\Box_n}{\sqrt{\gamma n}} - 
\big( X(w), Y(w) \big) \right\| > \epsilon    \right\} = O\left( n^{-\frac{1}{4}} \right). 
\end{equation}
\end{enumerate}

\bigskip

In each of the above three cases,
\begin{equation}
\label{eq:diverges}
 \|\Box_n\|  \xrightarrow[n\to\infty]{P} \infty.
\end{equation}

Informally speaking, $\Box_n$ converges in probability (as $n\to\infty$) to the appropriate asymptote depicted in \cref{fig:compactification-twisted}. 
\end{theorem}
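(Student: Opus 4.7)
The proof splits naturally into the three cases together with the universal divergence statement. Case (C) reduces to the Plancherel analogue (\cref{theorem:insertion-determinism-Plancherel}); Cases (A) and (B) are handled by direct analysis of the bumping route, using the asymptotic structure of Vershik--Kerov tableaux provided by \cref{theo:VK-frequencies}.

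For Case (C), let $\Word^0$ denote the sub-word of $(\Letter_1, \ldots, \Letter_{n-1})$ consisting of the continuous letters (those in $(0,1)$), and let $\ell(n)$ be its length. Conditionally on the positions of the continuous letters in the original sequence, $\Word^0$ is i.i.d.~uniform on $(0,1)$, and $\ell(n)/n \to \gamma$ with exponentially small exceptional probability. Applying \cref{theorem:insertion-determinism-Plancherel} to $(\Word^0, \letter)$ yields that the box $\widetilde{\Box}_n$ added to the recording tableau of this shorter sequence lies within distance $\epsilon \sqrt{\gamma n}$ of $(X(\letter), Y(\letter))\sqrt{\gamma n}$ with probability $1 - O(n^{-1/4})$. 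The crux is then to identify $\widetilde{\Box}_n$ with the genuine $\Box_n$ up to corrections of size $o(\sqrt{n})$: in each row, the continuous letters form a ``Plancherel bulk'' sandwiched between column-letter prefixes (all $<_r \letter$, hence never bumped by $\letter$) and row-letter suffixes (never reached, since the bumping route terminates within the bulk), so the bumping route in the full tableau visits exactly the same continuous entries as in the sub-tableau. The resulting column-shifts between the two tableaux amount to the number of column letters appearing in each visited row, which is $o(\sqrt{n})$ by \cref{theo:VK-frequencies}.

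For Case (A), let $\letter = -k$ with $\alpha_k > 0$; since Thoma's convention enforces $\alpha_1 \geq \cdots \geq \alpha_k > 0$, all row letters $-1, \ldots, -(k-1)$ also have positive frequency. The only letters $>_r -k$ in the insertion alphabet are the row letters $-1, \ldots, -(k-1)$, so the bumping route of $-k$ can bump only such letters, and the indices of successively bumped row letters are strictly decreasing. Combined with \cref{theo:VK-frequencies}, which ensures with high probability that each of the rows $1, \ldots, k-1$ contains the required row-letter entries, the bumping route must pass through $-(k-1), -(k-2), \ldots, -1$ in rows $1, \ldots, k-1$ respectively, terminating at the end of row $k$ (since $-1$, being the maximal letter of the alphabet, can bump nothing further). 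Case (B) is symmetric: via \cref{lem:duality} and the anti-isomorphism $\isomorphismItself$, it reduces to the same analysis in the jeu de taquin alphabet.

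The divergence \eqref{eq:diverges} is then immediate: in Cases (A) and (B), one coordinate of $\Box_n$ equals the corresponding row- or column-length $\lambda^{n-1}_k + 1$ or $(\lambda^{n-1})'_k + 1$, growing linearly by \cref{theo:VK-frequencies}; in Case (C), both coordinates of $\Box_n / \sqrt{\gamma n}$ converge to the positive numbers $X(\letter)$ and $Y(\letter)$. The main obstacle will be the detailed execution of Case (C), namely rigorously showing that the bumping route of $\letter$ in the full tableau stays inside the continuous bulk. This will require quantitative comparisons between the Plancherel shape of the bulk (at scale $\sqrt{\gamma n}$, governed by \cref{theorem:plancherel-asymptotic-shape}) and the Vershik--Kerov fingers (at linear scale $n$) to demonstrate that they are spatially separated at the $\sqrt{n}$ resolution, thereby keeping the error below $o(\sqrt{n})$.
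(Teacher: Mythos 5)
Your overall decomposition into the three cases and the reduction of case (C) to the Plancherel result is the right skeleton, but both of your key reductions have genuine gaps. In case (C), the assertion that the bumping route of $\letter$ in the full insertion tableau ``visits exactly the same continuous entries'' as in the insertion tableau of the continuous sub-word, up to a per-row column shift, is not justified and is not true as an exact statement: the continuous letters form a \emph{middle} interval of the alphabet, so their restriction inside $P(\Letter_1,\dots,\Letter_{n-1})$ is a \emph{skew} tableau that is only jeu-de-taquin equivalent to $P(\Word^0)$ --- the slides can reshuffle which continuous entries sit in which rows, not merely translate them. You concede this is ``the main obstacle,'' but it is precisely the content of the theorem in this case. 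The paper circumvents the route-tracking entirely by a two-sided squeeze: standardize the word, split $\PI^{-1}$ into the column-, neutral- and row-letter blocks, observe that $P(\PI^{-1})$ is obtained from the stacked tableaux by Sch\"utzenberger slides (which only move boxes down and left, giving $x_1\le x_2+x_4$, $y_1\le y_2+y_3$), and get the matching lower bound from Greene's theorem (the $\RSK$ shape of $\PI^{-1}$ contains that of $\PI^{-1}_0$) together with \cref{theorem:plancherel-asymptotic-shape}. Relatedly, your $o(\sqrt n)$ bound on the column-letter contribution cannot come from \cref{theo:VK-frequencies} (which only gives linear-order frequencies of row/column lengths); the correct tool is the concentration estimate \cref{lemma:row-letters-no-quadratic-height}.

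In case (A) the same problem recurs: the claim that, with probability tending to one, row $j$ of the insertion tableau contains a copy of $-(k-j)$ at the moment of insertion is a statement about \emph{where particular letters sit inside the insertion tableau}, whereas \cref{theo:VK-frequencies} only describes the limiting \emph{shape} of the diagram; as stated, your structural claim is essentially equivalent to the conclusion you are trying to prove, so the argument is close to circular. The paper's proof of (A) is a global counting argument instead: the deterministic part (any $<_c$-increasing bumping chain starting at $-k$ has length at most $k$) shows $\Box_n$ lies in the first $k$ rows, and then the identity $\Lambda^n_1+\cdots+\Lambda^n_{k-1}=\sum_m[\text{box $m$ lands in the first $k-1$ rows}]$, combined with the monotonicity of $P(\Box_n\text{ in the first }k\text{ rows})$ in $n$ (\cref{proba-is-decreasing}, itself proved via the duality \cref{lem:duality}) and the limit $\E(\Lambda^n_1+\cdots+\Lambda^n_{k-1})/n\to\alpha_1+\cdots+\alpha_{k-1}$, forces $\alpha_k\lim_n P(\Box_n\text{ in the first }k-1\text{ rows})\le 0$. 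If you want to keep your route-tracking strategy, you would need to actually prove the placement claim (e.g.\ by an argument in the spirit of the reflected-random-walk description of the top rows), which is substantially more work than the citation suggests.
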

\begin{proof}
We will consider each of the three cases separately.

\bigskip

\emph{The case \cref{item:insertion-Row}.} 

The elements which are bumped from consecutive rows in a given Schensted insertion step form an \mbox{$<_c$-increasing} sequence. The insertion alphabet $\alphabetinsertion$ has the property that any \mbox{$<_c$-increasing} sequence of its elements which starts with $-k$ is of length (at most) $k$. This shows that $\Box_n$ belongs to one of the first $k$ rows. 
Thus it remains to show that  
\[ \lim_{n\to\infty} P\big( \text{$\Box_n$ is in one of the first $k-1$ rows} \big) =0. \]

Let $\Lambda^n=(\Lambda^n_1,\Lambda^n_2,\dots)$ be the $\RSK$ shape associated to the sequence $(W_1,\dots,W_n)$ of i.i.d.~random letters from $\alphabetinsertion$ distributed according to the probability measure
$\alphabetmeasureinsertion_{\alpha,\beta,\gamma}$.
We use the notational shorthand
\[ [\text{\emph{condition}}] =\begin{cases} 1 & \text{if \emph{condition} is true}, \\ 0 & \text{otherwise}. \end{cases}\]
We clearly have
\begin{multline*} \Lambda^n_1 + \cdots +\Lambda^n_{k-1} = \\
\sum_{1\leq m\leq n} \Big[\text{the box created in the insertion 
$P(W_1,\dots,W_{m-1})\leftarrow W_m$} \\
\text{is in one of the first $k-1$ rows}\Big]
\end{multline*}
thus, by considering the events $W_m\in\{-1,-2,\dots,-(k-1)\}$ and $W_m=-k$, we obtain
\begin{multline*} \E \left( \Lambda^n_1 + \cdots +\Lambda^n_{k-1} \right) \geq \\
\sum_{1\leq m \leq n}\Big[ \alpha_1+\cdots+\alpha_{k-1}+\alpha_k \
 P\big( \text{$\Box_m$ is in one of the first $k-1$ rows}\big) \Big].
\end{multline*} 
This, together with \cref{proba-is-decreasing} implies that
\begin{multline}
\label{proba-in-first-rows} 
\liminf_{n\to\infty} \E \frac{ \Lambda^n_1 + \cdots +\Lambda^n_{k-1}}{n} \geq \\
 \alpha_1+\cdots+\alpha_{k-1}+\alpha_k \lim_{n\to\infty}
 P\big( \text{$\Box_n$ is in one of the first $k-1$ rows}\big).
\end{multline}

On the other hand, \cref{theo:KerovVershik-RSK-homomorphism} implies that $(\Lambda^0\nearrow\Lambda^1\nearrow\cdots)$ is a random infinite Young tableau with the distribution given by Vershik-Kerov measure $\measure_{\alpha,\beta,\gamma}$ thus  \cref{theo:VK-frequencies} can be applied. By Lebesgue's dominated convergence theorem
\begin{equation}
\label{VK-frequencies}
 \lim_{n\to\infty} \E \frac{\Lambda^n_1 + \cdots +\Lambda^n_{k-1}}{n} = 
\alpha_1+\cdots+\alpha_{k-1}. 
\end{equation}

\cref{proba-in-first-rows,VK-frequencies} finish the proof.

In order to show \eqref{eq:diverges} in this case it enough to use that $\Lambda_1^n,\dots,\Lambda_k^n$ all tend almost surely to infinity.

\bigskip

\emph{The case \cref{item:insertion-Column}.} 

The insertion alphabet $\alphabetinsertion$ has the property that any $<_r$-increasing sequence of its elements which ends with $k$ is of length (at most) $k$.
This implies that when $k$ is inserted by Schensted insertion to an arbitrary tableau, it is inserted into one of the first $k$ columns, thus $\Box_n$ belongs to one of the first $k$ columns as well.
Thus it remains to show that  
\[ \lim_{n\to\infty} P\big( \text{$\Box_n$ is in one of the first $k-1$ columns} \big) =0. \]
The remaining part of the proof is completely analogous to the case \cref{item:insertion-Row} considered above; one should simply replace the notion of \emph{rows} by \emph{columns}, the lengths of rows $\Lambda^n_1,\Lambda^n_2,\dots$ should be replaced by the lengths of columns 
$(\Lambda^n)'_1,(\Lambda^n)'_2,\dots$, and one should consider the events $W_m\in\{1,2,\dots,k-1\}$ and $W_m=k$.

\bigskip

\emph{The case \cref{item:insertion-Diagonal}.} 

Our goal is to find the location $(x_1,y_1)$ of the box containing $n$ in the recording tableau corresponding to $\Word:=(\Letter_1,\dots,\Letter_{n-1},\letter)$.
Let $\PI=(\pi_1,\dots,\pi_n)$ be the permutation given by standardization 
(see \cref{subsec:rectification}) of the sequence $\Word$.
By \cref{lem:rectification-of-a-sequence}, the recording tableaux corresponding to $\Word$ and $\PI$ are equal. The latter recording tableau is equal to the insertion tableau  $P(\PI^{-1})$. In the remaining part of the proof we will be studying this insertion tableau.
We use the shorthand notation $(\pi^{-1}_1,\dots,\pi^{-1}_n):=\PI^{-1}$.

Let $r$ (respectively, $c$) denote the number of row letters (respectively, column letters) in $\Word$. 
We define
\begin{align*}
\PI^{-1}_c&:= (\pi^{-1}_1,\dots,\pi^{-1}_c), \\
\PI^{-1}_0&:= (\pi^{-1}_{c+1},\dots,\pi^{-1}_{n-r}), \\
\PI^{-1}_r&:= (\pi^{-1}_{n+1-r},\dots,\pi^{-1}_{n})
\end{align*}
so that $\PI^{-1}$ is a concatenation of the words $\PI^{-1}_c$, $\PI^{-1}_0$, $\PI^{-1}_r$.
In this way the insertion tableau $P(\PI^{-1})$ can be obtained by stacking 
the insertion tableaux $P(\PI^{-1}_c)$, $P(\PI^{-1}_0)$, $P(\PI^{-1}_r)$ as shown in \cref{fig:stacking} \label{page:stacking}
and by performing Sch\"utzenberger's \jdt. 

\begin{figure}
\centering
\begin{tikzpicture}[scale=0.8]
\draw[thick,dotted] (0,11) -- (0,0) -- (14,0);
\draw[fill=blue!5,thick] (0,6) +(0,0) -- +(0,4) -- +(1,4) -- +(1,3) -- +(2,3) -- +(2,0) -- cycle
+(1,2) node {$P(\PI^{-1}_c)$}; 
\draw[fill=blue!5,thick] (2,2) +(0,0) -- +(5,0) -- +(5,2) -- +(3,2) -- +(3,3) -- +(1,3) --
+(1,4) -- +(0,4) -- cycle;
\draw[fill=blue!5,thick] (7,0) +(0,0) -- +(5,0) -- +(5,1) -- +(3,1) -- +(3,2) -- +(0,2) -- cycle
+(2,1) node {$P(\PI^{-1}_r)$};
\fill[pattern=north west lines D, pattern color=red] (0,0) rectangle (5,5);  

\draw[<->,blue,thick] (7.5,0) -- (7.5,2) node[midway,fill=blue!5] {$y_3$};
\draw[<->,blue,thick] (1,0) -- (1,5) node[midway,fill=white]{$y_2+y_3$};
\draw[<->,blue,thick] (4,2) -- (4,5) node[midway,fill=blue!5]    {$y_2$};

\draw (4,1.5) +(2,1.5) node {$P(\PI^{-1}_0)$};
\draw (2,2) +(3,3) rectangle +(2.5,2.5) +(2.75,2.75) node {\tiny $n$};
\end{tikzpicture}
\caption{The way of stacking the insertion tableaux corresponding to $\PI^{-1}_c$, $\PI^{-1}_0$, $\PI^{-1}_r$.
As Sch\"utzenberger's \jdt involves only slides of the boxes down and to the left, the hatched area shows possible locations of the box $n$ in $P(\PI^{-1})$.}
\label{fig:stacking}
\end{figure}

The entries of $\PI^{-1}_c$ (respectively, $\PI^{-1}_r$)  are the locations in the word $\Word$ of the column letters (respectively, row letters); in particular $n$ is one of the entries of $\PI^{-1}_0$.

Let $(x_2,y_2)$ be the location of the box containing $n$ in the insertion tableau
$P(\PI^{-1}_0)$; let $y_3$ be the number of rows of the insertion tableau  $P(\PI^{-1}_r)$ and let $x_4$ be the number of columns of the insertion tableau $P(\PI^{-1}_c)$.
Thus 
\begin{align*}
 x_1 \leq & x_2+x_4, \\
 y_1 \leq & y_2+y_3 
\end{align*}
(the proof of the second inequality is illustrated in \cref{fig:stacking}; the
proof of the first inequality is analogous).

Word $\PI^{-1}$ is created from the word  $\PI^{-1}_0$ by (i) adding a postfix $\PI^{-1}_r$ and then (ii) adding a prefix $\PI^{-1}_c$; it follows that the
the insertion tableau $P(\PI^{-1})$ can be created from  $P(\PI^{-1}_0)$ by (i) a sequence of \emph{row insertions} of the letters forming $\PI_r^{-1}$ (this part of the claim follows from the definition \eqref{eq:definition-insertion} of the insertion tableau), followed by (ii) a sequence of \emph{column insertions} of the letters (in the reverse order) forming $\PI^{-1}_c$ (for this part of the claim and for the definition of the column insertion see \cite[Section A.2]{Fulton1997}). It follows that
the $\RSK$ shape corresponding to $\PI^{-1}$ contains the $\RSK$ shape
corresponding to $\PI^{-1}_0$. 

Let $(\lambda_1,\lambda_2,\dots)$ denote the $\RSK$ shape
corresponding to $\PI^{-1}_0$. As $(x_1,y_1)$ is one of the inner corners of 
$P(\PI^{-1})$,
\begin{align*} 
x_1 \geq & \lambda_{y_1} \geq \lambda_{y_2+y_3},\\
y_1 \geq & \lambda'_{x_1} \geq \lambda'_{x_2+x_4}.
\end{align*}

For a moment let us condition over the value of $c$.
The $\RSK$ shape corresponding to $\PI^{-1}_c$ depends only on the relative order of its entries $(\pi^{-1}_1,\dots,\pi^{-1}_c)$ which are the positions of the column letters in the tuple $\Word$. This order would not change if we remove from $\Word$ all letters which are not column letters.
It follows that the number of columns of $\RSK$ shape corresponding to $\PI^{-1}_c$ has the same distribution as the number of columns of $\RSK$ shape corresponding to a sequence of length $c$ of i.i.d.~letters from $\alphabetinsertion_c$ such that the probability of the letter $k$ is equal to $\frac{\beta_k}{\beta_1+\beta_2+\cdots}$.
The number of columns can only increase if we increase the length of the sequence to $n$; thus, by \cref{lemma:row-letters-no-quadratic-height}, we have unconditional convergence
\begin{align}
\label{eq:convergence-x4}
 \frac{x_4}{\sqrt{n}} & \xrightarrow[n\to\infty]{P} 0. \\
\intertext{An analogous reasoning shows that}
\label{eq:convergence-y3}
 \frac{y_3}{\sqrt{n}} & \xrightarrow[n\to\infty]{P} 0. 
\end{align}

We denote by $n':=n-r-c$ the length of the sequence $\PI^{-1}_0$, which is the number of the elements of the tuple $(W_1,\dots,W_{n-1},w)$ which belong to $\alphabetinsertion_0$. By the law of large numbers, 
\begin{equation}  
\label{eq:fraction-converges}
\frac{n'}{n} \xrightarrow[n\to\infty]{P} \gamma,
\end{equation}
in particular
\[  n' \xrightarrow[n\to\infty]{P} \infty.\]
Thus by \cref{theorem:insertion-determinism-Plancherel} 
\begin{align}
\label{eq:convegenceAAA1}
 \frac{x_2}{\sqrt{n}} & \xrightarrow[n\to\infty]{P} \sqrt{\gamma}\ X(w),\\
\label{eq:convegenceAAA2}
 \frac{y_2}{\sqrt{n}} & \xrightarrow[n\to\infty]{P} \sqrt{\gamma}\ Y(w).
\end{align}

Thus we have shown that
\[  \frac{\lambda_{y_2+y_3}}{\sqrt{n}} \leq \frac{x_1}{\sqrt{n}} \leq \frac{x_2+x_4}{\sqrt{n}}; \]
the right-hand side converges in probability to $\sqrt{\gamma}\ X(w)$;
by \cref{theorem:plancherel-asymptotic-shape} the left-hand side also converges in probability to the same limit. Thus we have shown that 
\begin{align*}
\frac{x_1}{\sqrt{n}} & \xrightarrow[n\to\infty]{P} \sqrt{\gamma}\ X(w),\\
\intertext{as required. Proof of the other limit}
\frac{y_1}{\sqrt{n}} & \xrightarrow[n\to\infty]{P} \sqrt{\gamma}\ Y(w)
\end{align*}
follows in an analogous way.

In order to show \eqref{eq:rate-of-convergence} it is enough to revisit the above proof and check the rates of convergence in Eqs.~\eqref{eq:convergence-x4}--\eqref{eq:convegenceAAA2}.

As $X(w),Y(w)>0$ for $w\in(0,1)$, \Cref{eq:diverges} follows immediately.
\end{proof}

\section{Proofs of the main results}
\label{sec:proof-of-main-results}

\subsection{Asymptotic determinism of \jdt}

The following result is the final tool necessary in order to show the main results of the paper.
\begin{theorem}[Asymptotic determinism of \jdt]
\label[theorem]{theo:determinism-of-jdt}  
Let $(\alpha,\beta,\gamma)$ be an element of Thoma's simplex.
Let $\Letter_1,\Letter_2,\dots$ be a sequence of i.i.d.~random letters in $\alphabetjeu$ with the distribution $\alphabetmeasurejeu_{\alpha,\beta,\gamma}$. Let $w\in\alphabetjeu$ be fixed. Let  $\jpathlazy_n$ be the natural parametrization of the \jdt path associated with the random infinite Young tableau
\[ \RSK(w,\Letter_1,\Letter_2,\dots).\]

\begin{enumerate}[label=\emph{(\Alph*)}]
\item \label[empty]{item:jdt-Row}
In the case when $w=k\in\{1,2,3,\dots\}=\alphabetjeu_r$ we assume that $\alpha_k>0$.

Then, almost surely, \jdt trajectory stabilizes in $k$-th row:
\[ \lim_{n\to\infty} y(\jpathlazy_n) = k.\]

\item \label[empty]{item:jdt-Column}
In the case when $w=-k\in\{-1,-2,-3,\dots\}=\alphabetjeu_c$ we assume that $\beta_k>0$.

Then, almost surely, \jdt trajectory stabilizes in $k$-th column:
\[ \lim_{n\to\infty} x(\jpathlazy_n) = k.\]

\item \label[empty]{item:jdt-Diagonal}
In the case when $w\in (0,1)=\alphabetjeu_0$ we assume that $\gamma>0$.

Then, almost surely, 
\begin{equation}
\label{eq:theta-theta}
  \lim_{n\to\infty} \frac{\jpathlazy_n}{\sqrt{n}}  =
\sqrt{\gamma}\cdot \big( X(1-w), Y(1-w) \big). 
\end{equation}

\end{enumerate}

In all three above cases,
\[ \lim_{n\to\infty} \left\| \jpathlazy_n \right\| = \infty \]
holds almost surely.
\end{theorem}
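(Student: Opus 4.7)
The plan is to reduce the problem to \cref{theorem:insertion-determinism} via the duality \cref{lem:duality}, and then to upgrade the resulting convergence in probability to almost sure convergence using the monotonicity of $\jpathlazy_n$ along each coordinate.

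The first step is to apply \cref{lem:duality} to the finite prefix $(w, \Letter_1, \dots, \Letter_{n-1})$: since the natural-parametrization point $\jpathlazy_n$ on the infinite tableau coincides with the hole-position of finite jdt on the size-$n$ recording tableau, the duality identifies $\jpathlazy_n$ with the last box $\Box_n$ added by Schensted insertion to the reversed and $\isomorphismItself$-transformed sequence $(\isomorphism{\Letter_{n-1}}, \dots, \isomorphism{\Letter_1}, \isomorphism{w})$. By reversal invariance of an i.i.d.~sample and because the pushforward of $\alphabetmeasurejeu_{\alpha,\beta,\gamma}$ under $\isomorphismItself$ equals $\alphabetmeasureinsertion_{\alpha,\beta,\gamma}$, the marginal distribution of $\Box_n$ matches the setup of \cref{theorem:insertion-determinism} with inserted letter $\isomorphism{w}$. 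This letter equals $-k$ in case \cref{item:jdt-Row}, $k$ in case \cref{item:jdt-Column}, and $1-w$ in case \cref{item:jdt-Diagonal}, aligning perfectly with the three desired targets.

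For cases \cref{item:jdt-Row} and \cref{item:jdt-Column}, I would then invoke monotonicity. The jdt path moves only up or to the right, so both $x(\jpathlazy_n)$ and $y(\jpathlazy_n)$ are weakly increasing in $n$. In case \cref{item:jdt-Row}, \cref{theorem:insertion-determinism} forces $y(\jpathlazy_n) \in \{1, \dots, k\}$ deterministically, so this bounded monotone integer-valued sequence has an almost-sure limit $L \leq k$; the convergence $P(y(\jpathlazy_n) = k) \to 1$ then pins $L = k$ almost surely. Case \cref{item:jdt-Column} is symmetric. The divergence $\|\jpathlazy_n\| \to \infty$ similarly follows by monotonicity together with \eqref{eq:diverges}: $\|\jpathlazy_n\|$ is weakly increasing, and the marginal $\|\jpathlazy_n\| \to \infty$ in probability forces almost-sure divergence.

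Case \cref{item:jdt-Diagonal} is the main obstacle: convergence in probability of a monotone sequence does not upgrade to almost-sure convergence of ratios without further input. Here I would exploit the quantitative rate \eqref{eq:rate-of-convergence} by choosing the subsequence $N_k := k^5$, so that $\sum_k N_k^{-1/4} = \sum_k k^{-5/4} < \infty$. The Borel--Cantelli lemma then yields $\jpathlazy_{N_k}/\sqrt{N_k} \to \sqrt{\gamma}\,(X(1-w), Y(1-w))$ almost surely. For intermediate $n \in [N_k, N_{k+1}]$, monotonicity in each coordinate gives the sandwich
\[
\frac{x(\jpathlazy_{N_k})}{\sqrt{N_k}} \sqrt{\tfrac{N_k}{N_{k+1}}} \;\leq\; \frac{x(\jpathlazy_n)}{\sqrt{n}} \;\leq\; \frac{x(\jpathlazy_{N_{k+1}})}{\sqrt{N_{k+1}}} \sqrt{\tfrac{N_{k+1}}{N_k}},
\]
and analogously for $y$. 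Since $N_{k+1}/N_k = (1 + 1/k)^5 \to 1$, both ends of the sandwich converge almost surely to the common limit $\sqrt{\gamma}\,X(1-w)$ (respectively $\sqrt{\gamma}\,Y(1-w)$), establishing \eqref{eq:theta-theta}. As $X(1-w), Y(1-w) > 0$ for $w \in (0,1)$, the divergence $\|\jpathlazy_n\| \to \infty$ is then automatic.
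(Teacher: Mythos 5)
Your proposal is correct and follows essentially the same route as the paper: identify $\jpathlazy_n$ with the last Schensted box $\Box_n$ of the reversed $\isomorphismItself$-transformed prefix via \cref{lem:duality}, import the in-probability statements from \cref{theorem:insertion-determinism}, and upgrade to almost-sure convergence using monotonicity of the path (for cases \ref{item:jdt-Row}, \ref{item:jdt-Column} and the divergence claim) and a Borel--Cantelli subsequence argument with the rate \eqref{eq:rate-of-convergence} plus a sandwich for case \ref{item:jdt-Diagonal}. The only cosmetic difference is your choice of subsequence $N_k=k^5$ versus the paper's $n_m=m^8$; both make $\sum_k N_k^{-1/4}$ converge, so the argument is unaffected.
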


Note that while in \Cref{theorem:insertion-determinism} the convergence holds only in the sense of convergence in probability, in the above theorem the convergence is in the almost sure sense.

\begin{proof}
The proof which we provide below is analogous to the proof of \cite[Theorem 5.2]{RomikSniady2011}. 

Let $\Box_n$ be the box with the label $n$ in 
\[ \RSK\big( \isomorphism{\Letter_{n-1}}, \isomorphism{\Letter_{n-2}}, \dots,
\isomorphism{\Letter_1}, \isomorphism{w}\big).\]
By \Cref{lem:duality}, 
\[\jpathlazy_n=\Box_n.\]
\Cref{theorem:insertion-determinism} can be applied in order to study the asymptotic behavior of the right-hand side; we will discuss the three cases separately.

\bigskip

\emph{The case \cref{item:jdt-Row}.} 
\Cref{theorem:insertion-determinism} shows that $y(\jpathlazy_n)$ converges to $k$ in probability. Since $y(\jpathlazy_n)$ is a weakly increasing sequence, the limit 
$\lim_{n\to\infty} y(\jpathlazy_n)\in\{1,2,\dots,\infty\}$ exists almost surely; this implies that 
$\lim_{n\to\infty} y(\jpathlazy_n)=k$ holds almost surely, as required.

\bigskip

\emph{The case \cref{item:jdt-Column}.} 
This case is analogous to the case \cref{item:jdt-Row} considered above.

\bigskip

\emph{The case \cref{item:jdt-Diagonal}.} 
Setting $n_m=m^8$, from \Cref{theorem:insertion-determinism} and 
Borel-Cantelli lemma we show an almost sure convergence
\[ \lim_{m\to\infty} \frac{\jpathlazy_{n_m}}{\sqrt{n_m}} = \sqrt{\gamma}\cdot \big( X(1-w), Y(1-w) \big) \]
along the subsequence $n=n_m$. 
Finally, note that $n_{m+1}/n_m \to 1$ as $m\to\infty$. It is easy to see that this, together with the fact that the path $(\jpathlazy_{n})_n$ advances monotonically in both the $x$ and $y$ directions, guarantees (deterministically) that convergence along the subsequence implies convergence for the entire sequence.

\bigskip

In all three above cases, the sequence $\left\| \jpathlazy_n \right\|$ is weakly increasing and \Cref{theorem:insertion-determinism} guarantees that
$\left\| \jpathlazy_n \right\| \xrightarrow[n\to\infty]{P} \infty$;  this implies that $\lim_{n\to\infty} \left\| \jpathlazy_n \right\| =\infty$ holds almost surely.
\end{proof}

\subsection{Proof of \Cref{theo:paths}}

\begin{proof}[Proof of \Cref{theo:paths}]
Again, without loss of generality, we can take 
\[T:=\RSK(\Letter_1,\Letter_2,\dots),\]
where $\Letter_1,\Letter_2,\dots$ is a sequence of i.i.d.~random letters from
$\alphabetjeu$ with the probability distribution $\alphabetmeasurejeu_{\alpha,\beta,\gamma}$.  We apply \Cref{theo:determinism-of-jdt}; 
the asymptotic behavior of \jdt path depends only on the value of $\Letter_1$, the first letter. 
Note that \jdt path is parametrized in a different way in \Cref{theo:paths} and in \cref{theo:determinism-of-jdt}; this difference, however, creates no difficulties.
\end{proof}

\subsection{Probability distribution of \jdt asymptotic angles}

\begin{proposition}
\label[proposition]{theorem:explicit-distribution-Theta}
We keep notations from \cref{theo:paths}.
If $\gamma>0$, the distribution of the asymptotic angle $\Theta$ 
(conditioned under event that the case \ref{enum:C} holds true) is an absolutely continuous random variable on $(0,\pi/2)$ whose distribution has the following explicit description:
\begin{equation} 
\label{eq:theta-dist} 
\Theta \equalindist \Pi(W),
\end{equation}
where $W$ is a random variable distributed according to the semicircle distribution $\SClaw$ on $[-2,2]$, i.e., having density given by
\begin{equation} \label{eq:semicircle}
\SClaw(dw) = \frac{1}{2 \pi} \sqrt{4-w^2}\,dw \qquad (|w|\le 2),
\end{equation}
and $\Pi(\cdot)$ is the function 
\[ \Pi(w) = \frac{\pi}{4}-\cot^{-1}\left[\frac{2}{\pi} \left( \sin^{-1}\left(\frac{w}{2}\right)+\frac{\sqrt{4-w^2}}{w} \right) \right]
\quad (-2 \le w \le 2).\]
\end{proposition}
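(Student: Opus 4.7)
The plan is to reduce the computation of the law of $\Theta$, conditioned on event~\ref{enum:C}, to the almost-sure formula already obtained in \cref{theo:determinism-of-jdt}~\ref{item:jdt-Diagonal}, and then to recognise the resulting random variable as $\Pi(W)$ with $W\sim\SClaw$ by a direct algebraic identity between the functions $\Omega$ and $\Pi$.

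First I would realise the random tableau $T$ as $T=\RSK(\Letter_1,\Letter_2,\dots)$ for an i.i.d.\ sequence of letters with distribution $\alphabetmeasurejeu_{\alpha,\beta,\gamma}$, using \cref{theo:KerovVershik-RSK-homomorphism}. The event~\ref{enum:C} coincides (up to a null set) with $\{\Letter_1\in\alphabetjeu_0\}$, which has probability $\gamma$; conditional on this event, $\Letter_1$ is uniformly distributed on $(0,1)$. Hence it suffices to compute the distribution of $\Theta$ under the assumption that $\Letter_1$ is uniform on $(0,1)$. Applying \cref{theo:determinism-of-jdt}~\ref{item:jdt-Diagonal} conditionally on $\Letter_1=w\in(0,1)$ gives, almost surely,
\[
\lim_{n\to\infty}\frac{\jpathlazy_n}{\sqrt{n}}=\sqrt{\gamma}\bigl(X(1-w),Y(1-w)\bigr),
\]
and since $\jpath$ and $\jpathlazy$ trace the same trajectory in $\N^2$, the asymptotic direction is the same. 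Therefore $\tan\Theta=Y(1-\Letter_1)/X(1-\Letter_1)$ almost surely on~\ref{enum:C}.

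Next I would perform the change of variables that manufactures the semicircle law. Set $U':=1-\Letter_1$, which is again uniform on $(0,1)$, and let $W:=U(U')=F^{-1}(U')$. A direct differentiation of $F$ shows that $F'(u)=\sqrt{4-u^2}/(2\pi)$, i.e.\ $F$ is precisely the cumulative distribution function of the semicircle law $\SClaw$ on $[-2,2]$, so $W\sim\SClaw$ by the inverse-transform principle. By the definitions of $X$, $Y$, $V$, one has $V(U')=\Omega(U(U'))=\Omega(W)$, and hence
\[
X(1-\Letter_1)=\frac{\Omega(W)+W}{2},\qquad Y(1-\Letter_1)=\frac{\Omega(W)-W}{2}.
\]
Consequently $\tan\Theta=(\Omega(W)-W)/(\Omega(W)+W)$.

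It remains to verify the algebraic identity $\tan(\Pi(w))=(\Omega(w)-w)/(\Omega(w)+w)$ for $w\in(-2,2)$, together with the fact that both $\Theta$ and $\Pi(W)$ take values in $(0,\pi/2)$. The definition of $\Omega$ gives
\[
\frac{\Omega(w)}{w}=\frac{2}{\pi}\left(\sin^{-1}(w/2)+\frac{\sqrt{4-w^2}}{w}\right),
\]
which is exactly the argument $A(w)$ of the $\cot^{-1}$ in the definition of $\Pi$. Writing $\psi=\cot^{-1}(A)$ and applying the tangent subtraction formula to $\pi/4-\psi$ yields $\tan(\Pi(w))=(A-1)/(A+1)=(\Omega(w)-w)/(\Omega(w)+w)$, matching the expression for $\tan\Theta$. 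A boundary check ($w\to 2^-$ giving $\Pi\to 0$, $w\to -2^+$ giving $\Pi\to\pi/2$, $w\to 0$ giving $\Pi\to\pi/4$) confirms that $\Pi$ maps $(-2,2)$ bijectively onto $(0,\pi/2)$, so the equality of tangents upgrades to $\Theta=\Pi(W)$ in distribution, which is \eqref{eq:theta-dist}. The only mildly delicate point is the verification $F'=\tfrac{1}{2\pi}\sqrt{4-w^2}$ together with the trigonometric rearrangement of $\tan(\Pi(w))$; once these are in place, everything else is a direct transcription of the results already assembled.
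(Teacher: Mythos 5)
Your proof is correct, but it takes a genuinely different route from the paper. The paper's own argument is a two-step reduction: it observes (via \cref{theo:determinism-of-jdt}) that the conditional law of $\Theta$ on event \ref{enum:C} is the same for every point of Thoma simplex with $\gamma>0$, and then simply cites the Plancherel case $\alpha=\beta=(0,0,\dots)$, $\gamma=1$, where the explicit distribution was already established as Theorem~1.1 of \cite{RomikSniady2011}. You instead re-derive that explicit distribution from scratch: you extract $\tan\Theta=Y(1-\Letter_1)/X(1-\Letter_1)$ from \cref{theo:determinism-of-jdt}~\ref{item:jdt-Diagonal}, check that $F'(u)=\frac{1}{2\pi}\sqrt{4-u^2}$ so that $W=F^{-1}(1-\Letter_1)\sim\SClaw$ by inverse transform, and verify $\tan\Pi(w)=(\Omega(w)-w)/(\Omega(w)+w)$ by the tangent subtraction formula. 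What the paper's route buys is brevity (it outsources the entire computation to the earlier paper); what yours buys is a self-contained explanation of \emph{why} the semicircle law and the specific function $\Pi$ appear, which the paper never makes visible. Two small points you should tighten: to upgrade equality of tangents to equality of angles you need $\Pi(w)\in(0,\pi/2)$ for \emph{all} $w\in(-2,2)$, not just at the three boundary/midpoint values you check --- this amounts to $|\Omega(w)/w|>1$, i.e.\ $\Omega(w)>|w|$ on $(-2,2)$, which holds since the limit curve lies strictly above $v=|u|$ there; and for the asserted absolute continuity you should note that $\Pi$ is strictly monotone, which follows from $\frac{d}{dw}\frac{\Omega(w)-w}{\Omega(w)+w}=\frac{2(w\Omega'(w)-\Omega(w))}{(\Omega(w)+w)^2}=\frac{-\frac{4}{\pi}\sqrt{4-w^2}}{(\Omega(w)+w)^2}<0$. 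With those two lines added your argument is complete.
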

\begin{proof}
Since \Cref{theo:paths} depends only on the distribution of the random infinite tableau $T$, without loss of generality we can assume, by \cref{theo:KerovVershik-RSK-homomorphism}, that $T=\RSK(W_1,W_2,\dots)$, where $W_1,W_2,\ldots\in\alphabetjeu$ is a sequence of i.i.d.~random letters with the distribution $\alphabetmeasurejeu_{\alpha,\beta,\gamma}$.

By \Cref{theo:determinism-of-jdt}  it follows that (as long as $\gamma>0$)
the conditional distribution of $\Theta$ does not depend on the element $(\alpha,\beta,\gamma)$ of Thoma's simplex.
Again, the difference of parametrizations of \jdt paths creates no difficulties.
%
In particular, this conditional distribution coincides with the unconditional distribution of $\Theta$ in the case $\alpha=\beta=(0,0,\dots)$, $\gamma=1$ which corresponds to Plancherel measure. The result in this special case has been proved in our previous paper \cite[Theorem 1.1]{RomikSniady2011}.
\end{proof}

\subsection{Proof of \cref{theorem:isomorphism-dynamical,theo:measure-preserving-and-ergodic,theorem:inverse-rsk}. }

\begin{proof}[Proof of \cref{theorem:isomorphism-dynamical,theo:measure-preserving-and-ergodic,theorem:inverse-rsk}]

\Cref{theorem:isomorphism-dynamical,theo:measure-preserving-and-ergodic,theorem:inverse-rsk}
contain several claims:
\begin{itemize}
\item
\emph{$\RSK$ is a homomorphism of probability spaces (this is a part of \Cref{theorem:isomorphism-dynamical}).} 

This has been shown by Kerov and Vershik, see \cref{theo:KerovVershik-RSK-homomorphism}.

\item
\emph{$\RSK$ is a factor map of dynamical systems, i.e.
\begin{equation}
\label{eq:factor-map}
J\circ \RSK= \RSK \circ S 
\end{equation}
(this is a part of \Cref{theorem:isomorphism-dynamical}).}

This follows from \cref{lem:factor-map}; for details see \cite[Section 2.4]{RomikSniady2011}.

\item 
\emph{\Jdt transformation $J$ is measure-preserving (this is a part of \Cref{theo:measure-preserving-and-ergodic}).} 

We need to show that if $T$ is a random infinite Young tableau with the distribution $\measure_{\alpha,\beta,\gamma}$ then $J(T)$ has the same distribution. Without loss of generality we may assume that
\begin{align*} T&= Q(\Letter_1,\Letter_2,\dots) \\
\intertext{is as prescribed by \cref{theo:KerovVershik-RSK-homomorphism}.
Equation \eqref{eq:factor-map} implies}
J(T)&= Q(\Letter_2,\Letter_3,\dots). 
\end{align*}
Another invocation of \cref{theo:KerovVershik-RSK-homomorphism} shows that the distribution of $J(T)$ is as required.

\item 
\emph{Map $\RSK^{-1}$ defined by \eqref{eq:rsk-inverse} is well defined almost everywhere (this is a part of \cref{theorem:inverse-rsk}).}

This follows from the facts that $J$ is measure-preserving and $\Psi$ is well-defined almost everywhere.

\item 
\emph{$\RSK^{-1} \circ \RSK = \Id$ almost everywhere, where $\RSK^{-1}$ is defined by \eqref{eq:rsk-inverse} (this is a part of \Cref{theorem:inverse-rsk}).}

Let $\Word=(\Letter_1,\Letter_2,\dots)\in\alphabetjeu^\N$ 
be an i.i.d.~sequence of letters with distribution $\alphabetmeasurejeu_{\alpha,\beta,\gamma}$ and let
$(U_1,U_2,\dots)=\RSK^{-1} \circ \RSK( \Word)$.
For any $k\geq 1$
\begin{equation}
\label{eq:why-inverse-ok}
 U_k = \Psi\big[ J^{k-1}\big( \RSK(\Letter_1,\Letter_2,\dots) \big) \big]  =
\Psi\big[  \RSK(\Letter_k,\Letter_{k+1},\dots) \big],  
\end{equation}
where the last equality follows from \eqref{eq:factor-map}.
We apply \cref{theo:determinism-of-jdt} in order to show that $U_k=\Letter_k$ almost surely; in the case when $\Letter_k\in\alphabetjeu_r$ or $\Letter_k\in\alphabetjeu_c$ this is straightforward, below we present a more detailed analysis of the case when $\Letter_k\in(0,1)=\alphabetjeu_0$. 

Concerning the right-hand side of \eqref{eq:why-inverse-ok}, the value of $\Theta$ (and thus the value of $\Psi$ as well) corresponding to \eqref{eq:theta-theta} depends only on $W_k$ and not on the element $(\alpha,\beta,\gamma)$ of Thoma simplex, as long as $\gamma>0$; in particular we can take $\alpha=\beta=(0,0,\dots)$, $\gamma=1$ which corresponds to Plancherel measure. The result in this case has been proved  in our previous work \cite[Eq.~(47)]{RomikSniady2011}.


\item 
\emph{$\RSK \circ \RSK^{-1} = \Id$ almost everywhere, where $\RSK^{-1}$ is defined by \eqref{eq:rsk-inverse} (this is a part of \Cref{theorem:inverse-rsk}).}

Let $T$ be a random infinite Young tableau with the distribution $\measure_{\alpha,\beta,\gamma}$. Without loss of generality we may assume that \linebreak
$T=\RSK(\Letter_1,\Letter_2,\dots)$, where $\Letter_1,\Letter_2,\dots$ is an i.i.d.~sequence of random letters with the distribution $\alphabetmeasurejeu_{\alpha,\beta,\gamma}$ ({\cref{theo:KerovVershik-RSK-homomorphism}}).
Then 
\begin{multline*}
 \RSK \circ \RSK^{-1} (T)= \RSK \circ \underbrace{\RSK^{-1} \circ \RSK}_{=\Id, \text{ as shown above}}(\Letter_1,\Letter_2,\dots)= \\ \RSK(\Letter_1,\Letter_2,\cdots) = T
\end{multline*}
holds true almost surely, as required.

\item
\emph{\Jdt transformation $J$ is ergodic (this is a part of \Cref{theo:measure-preserving-and-ergodic}).}

By \cref{theorem:isomorphism-dynamical}, $J$ is isomorphic to a Bernoulli shift $S$ which is clearly ergodic, see \cite{Silva2008}.
\end{itemize}
\end{proof}

\section*{Acknowledgments}

In the initial phase of research, Piotr \'Sniady was a holder of a fellowship of \emph{Alexander von Humboldt-Stiftung}.
Piotr \'Sniady's research has been supported by \emph{Deutsche Forschungsgemeinschaft} under grant SN 101/1-1.
I~thank the referees for their constructive criticism which helped improve the paper.

\appendix

\section{The ``counterexample'' of Fulman}

The work \cite{KerovVershik1986} of Kerov and Vershik has been criticized by Fulman \cite{Fulman2002}.
Since the current paper heavily uses the results of Kerov and Vershik, 
we feel obliged to respond to this criticism.

Fulman writes (all quotations are from \cite[p.~186--187]{Fulman2002}):
\begin{quote}
%
\emph{The paper \cite{KerovVershik1986} states a version of
Theorem 12 in which there is also a parameter $\gamma$ (their Proposition 3), but it is incorrect for
$\gamma\neq 0$ 
as the following counterexample shows. Setting all parameters other than $\alpha$ and
$\gamma = 1-\alpha$ equal to $0$, it follows from the definitions that the extended Schur function $\tilde{s}_2$
is equal to $\frac{\alpha^2+1}{2}$.}
\end{quote}
Indeed, this is the correct value of the extended Schur function.

\begin{quote}
\emph{But if Proposition 3 of \cite{KerovVershik1986} were correct, it would also equal $\alpha^2+(1-\alpha)\alpha=\alpha$ since the two words giving a Young tableau with $1$ row of length $2$ are $11$ and $01$.}
\end{quote}
With our notations, \cite[Proposition 3]{KerovVershik1986} states that the extended Schur function $\tilde{s}_2$ is equal to the probability that a random filling
\[
\begin{tikzpicture}
\draw (0,0) grid (2,1); 
\draw[very thick] (0,0) rectangle (2,1);
\draw (0.5,0.5) node {$u$};       
\draw (1.5,0.5) node {$v$};
\end{tikzpicture}
\]
of a Young diagram $(2)$ with letters $u,v\in\alphabet$ gives a (semistandard) tableau.
The probability distribution of the letters is assumed to have a unique atom (of weight $\alpha$) on some row letter $L\in\alphabet_r$. 
There are the following disjoint possibilities:
\begin{enumerate}[label=\emph{(\alph*)}]
\item \label[empty]{enum:counterexample-a} $u=v=L$;
\item \label[empty]{enum:counterexample-b} $u=L$, $v\neq L$, $u<v$;
\item \label[empty]{enum:counterexample-c} $u\neq L$, $v=L$, $u<v$;
\item \label[empty]{enum:counterexample-d} $u,v\neq L$, $u<v$.
\end{enumerate}
The event \ref{enum:counterexample-a} occurs with probability $\alpha^2$.
The union of the events \ref{enum:counterexample-b} and \ref{enum:counterexample-c} occurs with probability $\alpha (1-\alpha)$. The event \ref{enum:counterexample-d} occurs with probability $\frac{1}{2} (1-\alpha)^2$. The sum of these probabilities gives the correct value of the extended Schur function $\tilde{s}_2$.

It seems that in the calculation of Fulman the case \ref{enum:counterexample-d} is missing.
His explanation: \emph{``since the two words giving a Young tableau with $1$ row of length $2$ are $11$ and $01$''} probably stems from a collision in the notation used by Kerov and Vershik and the one used by Fulman.

\begin{quote}
\emph{In fact as
the $2$ in the denominator of $\frac{\alpha^2+1}{2}$
shows, one can't interpret the extended Schur functions with $\gamma 
\neq 0$ in terms of $\RSK$ and words on a finite number of symbols. }
\end{quote}

Indeed, in order to have $\gamma>0$ one should use an infinite alphabet 
and the non-atomic part of the probability distribution should be non-zero, just as
claimed by Kerov and Vershik.

\bibliographystyle{alpha}
\bibliography{biblio}

\end{document}